\newtheorem{thm}{Theorem}
\newtheorem{lem}[thm]{Lemma}
\newtheorem{prop}[thm]{Proposition}
\newtheorem{defn}[thm]{Definition}
\newtheorem{ex}[thm]{Example}
\newtheorem{rem}[thm]{Remark}
\newtheorem{notations}[thm]{Conventions}
\begin{document}

\title{Dubrovin's duality for $F$-manifolds with eventual identities}
\date{}

\author{Liana David and Ian A.B. Strachan}
\maketitle

{\bf Abstract:} A vector field $\mathcal E$ on an $F$-manifold
$(M, \circ , e)$ is an eventual identity if it is invertible and
the multiplication $X* Y := X\circ Y\circ {\mathcal E}^{-1}$
defines a new $F$-manifold structure on $M$. We give a characterization of such
eventual identities, this being a problem raised by Manin
\cite{manin}. We develop a duality between $F$-manifolds with
eventual identities and we show that is compatible with the local
irreducible decomposition of $F$-manifolds and preserves the class of
Riemannian $F$-manifolds. We find necessary and sufficient conditions
on the eventual identity which insure that harmonic Higgs bundles
and $DChk$-structures are preserved by our duality.
We use eventual identities to construct
compatible pair of metrics.

\section{Introduction}

In \cite{dubrovin} Dubrovin introduced the idea of an almost dual Frobenius manifold.
Starting from a Frobenius manifold one may construct a new geometric object that shares
many, but crucially not all, of the essential features of the original manifold. In particular
a new \lq dual\rq~solution of the underlying Witten-Dijkgraaf-Verlinde-Verlinde (WDVV) equations
may be constructed from the original manifold.  Such a construction reflects certain other
\lq dualities\rq~that occur in other areas of mathematics where Frobenius manifolds appear.
For example, in:

\begin{itemize}

\item{} Quantum cohomology and mirror symmetry;

\item{} Integrable systems, via generalizations of the classical Miura transform;

\item{} Singularity theory, via the correspondence between oscillatory integrals and period integrals.

\end{itemize}

\noindent More specifically, given a Frobenius manifold
$(M\,,\circ\,,e\,,E\,,{\tilde g})$ with multiplication $\circ\,,$
unity field $e\,,$ Euler field $E$ and metric ${\tilde g}$ one may
define a new multiplication $*$ and metric $g$ by the formulae
\begin{eqnarray*}
X* Y & = & X \circ Y \circ E^{-1}\,,\\
g(X,Y) & = & {\tilde g}(E^{-1} \circ X, Y)
\end{eqnarray*}
where $E^{-1} \circ E = e\,.$ Clearly $*$ is associative,
commutative and has a unity, namely $E\,,$ the original Euler
field. The new metric $g$ (the intersection form) turns out to be
flat and from these two new objects one may define a dual solution
to the WDVV-equations. This correspondence is not completely dual
- certain properties are lost. For example, while ${\widetilde
\nabla} e=0\,,$ the new identity does not share this property: in
general $\nabla E \neq 0\,.$

Underlying Frobenius manifolds is a structure known as an $F$-manifold, which was introduced
by Hertling and Manin \cite{HM}.

\begin{defn}\cite{HM} i)  An $F$-manifold is a triple $(M, \circ , e)$ where $M$
is a manifold, $\circ$ is a
commutative, associative multiplication on the tangent bundle
$TM$, with identity vector field $e$, such that the $F$-manifold
condition
\begin{equation}\label{fman}
L_{X\circ Y} (\circ ) := X\circ L_{Y}(\circ ) + Y\circ L_{X}(\circ
),
\end{equation}
holds, for any smooth vector fields $X, Y\in {\mathcal X}(M).$

\medskip

ii) An Euler vector field (of weight $d$) on an $F$-manifold $(M,
\circ , e)$ is a vector field $E$ which preserves the
multiplication up to a constant, i.e.
$$
L_{E}(\circ )(X, Y) = d\,X \circ Y,\quad\forall X, Y\in {\mathcal
X}(M).
$$
\end{defn}

\noindent $F$-manifolds appear in many areas of mathematics. All Frobenius manifolds
have an underlying $F$-manifold structure, and in examples originating from singularity
theory such $F$-manifolds arise in a very natural way \cite{hert}. They also appear
within integrable systems - both in examples coming from the submanifold geometry of
Frobenius manifolds \cite{iabs1} and non-local biHamiltonian geometry \cite{l-i} and
their role has been elucidated further in \cite{ita}.

Given an $F$-manifold with an Euler vector field one may construct
a dual multiplication via $X* Y=X \circ Y \circ E^{-1}\,.$ While
this is commutative and associative with unity element, whether or
not this defines an $F$-manifold is not immediately clear. More
generally, Manin \cite{manin} replaced the Euler field $E$ by an
arbitrary invertible vector field and used this to define a new
multiplication.

\begin{defn}\cite{manin} A vector field $\mathcal E$ on an
$F$-manifold $(M, \circ , e)$ is called an eventual identity, if
it is invertible (i.e. there is a vector field ${\mathcal
E}^{-1}$ such that ${\mathcal E}\circ {\mathcal E}^{-1}={\mathcal
E}^{-1}\circ {\mathcal E} =e$) and, moreover, the multiplication
\begin{equation}\label{star}
X*Y = X\circ Y\circ {\mathcal E}^{-1},\quad\forall X, Y\in {\mathcal X}(M)
\end{equation}
defines a new $F$-manifold structure on $M$.
\end{defn}

\noindent The reason for the
terminology is that ${\mathcal E}$ is the identity vector field
for the multiplication $*$. In this paper we give the characterization
of such eventual identities, thus answering a question raised by
Manin \cite{manin}.

\begin{thm}\label{main} i) Let $(M, \circ , e)$ be an $F$-manifold and ${\mathcal E}$
an invertible vector field. Then ${\mathcal E}$ is an eventual identity
if and only if
\begin{equation}\label{char}
L_{\mathcal E}(\circ ) ( X, Y)= [e, {\mathcal E}]\circ X\circ Y,\quad\forall
X, Y\in {\mathcal X}(M).
\end{equation}
ii) Let
$$
X*Y = X\circ Y\circ {\mathcal E}^{-1}
$$
be the new $F$-manifold multiplication. Then the map
$$
(M, \circ , e, {\mathcal E} )\rightarrow (M, * , {\mathcal E}, e)
$$
is an isomorphism between $F$-manifolds with eventual identities.
\end{thm}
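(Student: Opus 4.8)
Since the dual product $*$ is by construction commutative, associative and has unit $\mathcal E$ (indeed $X*\mathcal E = X\circ \mathcal E\circ \mathcal E^{-1}=X$), the only issue in part~i) is to decide when $*$ satisfies the $F$-manifold condition \eqref{fman}. The plan is to express the Lie derivative $L_X(*)$ entirely through the Lie derivative $L_X(\circ)$ of the original product and the derivative of the inverse field. Writing $\mathcal F=\mathcal E^{-1}$ and expanding $Y*Z=Y\circ Z\circ \mathcal F$ by the Leibniz rule for $L_X$ relative to $\circ$, each application of which contributes a defect term $L_X(\circ)$, I would obtain
\begin{equation*}
(L_X *)(Y,Z)=(L_X\circ)(Y,Z)\circ \mathcal F+(L_X\circ)(Y\circ Z,\mathcal F)+(Y\circ Z)\circ L_X(\mathcal F).
\end{equation*}
A companion relation for $L_X(\mathcal F)$ comes from differentiating $\mathcal E\circ \mathcal F=e$, namely $(L_X\circ)(\mathcal E,\mathcal F)+(L_X\mathcal E)\circ \mathcal F+\mathcal E\circ L_X(\mathcal F)=[X,e]$, which solves for $L_X(\mathcal F)$ in terms of $L_X(\mathcal E)$, $L_X(\circ)$ and $[X,e]$ after multiplying through by $\mathcal F$.

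The next step is to substitute these two identities into the $F$-manifold condition for $*$,
\begin{equation*}
(L_{X*Y}*)(Z,W)=X*(L_Y *)(Z,W)+Y*(L_X *)(Z,W),
\end{equation*}
and then reduce everything to the original product using associativity and commutativity of $\circ$ together with condition \eqref{fman} for $\circ$. The crucial point is that \eqref{fman} must be applied not only to the pair $(Z,W)$ but also to pairs involving $\mathcal E$ and $\mathcal F$; this is what drives the cancellation, and I expect the bookkeeping of this multilinear computation to be the main obstacle. After the dust settles the whole identity should collapse to the single tensorial equation $(L_{\mathcal E}\circ)(X,Y)=[e,\mathcal E]\circ X\circ Y$, which is \eqref{char}; reading the computation backwards yields the converse implication, giving the claimed equivalence. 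A useful preliminary simplification is to observe, via the derivation property of $L_{\mathcal E}$, that \eqref{char} is automatic whenever $X$ or $Y$ equals $e$, so that only its genuinely bilinear part carries content.

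For part~ii) no further hard computation is needed. The unit of $*$ is $\mathcal E$, and $e$ is $*$-invertible with $*$-inverse $\mathcal E\circ \mathcal E$, since $e*(\mathcal E\circ \mathcal E)=e\circ \mathcal E\circ \mathcal E\circ \mathcal E^{-1}=\mathcal E$. Hence the product dual to $*$ determined by the eventual identity $e$ is
\begin{equation*}
X\bullet Y=X*Y*(\mathcal E\circ \mathcal E)=(X\circ Y\circ \mathcal E^{-1})\circ(\mathcal E\circ \mathcal E)\circ \mathcal E^{-1}=X\circ Y,
\end{equation*}
so $\bullet=\circ$. Because $\circ$ is an $F$-manifold structure by hypothesis, the very definition of an eventual identity shows that $e$ is an eventual identity for $(M,*,\mathcal E)$, whence $(M,*,\mathcal E,e)$ is again an $F$-manifold with eventual identity. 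The assignment $(M,\circ,e,\mathcal E)\mapsto(M,*,\mathcal E,e)$ then squares to the identity, and this involutivity is exactly the asserted isomorphism of $F$-manifolds with eventual identities.
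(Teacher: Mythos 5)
Your part i) contains a genuine gap: the proof stops exactly where the content of the theorem begins. The two preparatory identities are fine --- your expansion of $(L_X*)(Y,Z)$ is a regrouped version of the paper's relation (\ref{rel1}), and differentiating $\mathcal E\circ\mathcal F=e$ is legitimate --- but everything after ``substitute these two identities'' is a prediction, not an argument (``I expect the bookkeeping \dots to be the main obstacle'', ``after the dust settles the whole identity should collapse''). That collapse \emph{is} the theorem. Concretely, two ideas are missing. First, the $F$-manifold condition for $*$ is an identity in four vector fields $X,Y,Z,W$, while (\ref{char}) is a condition in two; it does not reduce by bookkeeping alone. The paper gets from one to the other by strategic specialization: set $X=Y:=e$ to obtain the two-variable condition (\ref{ec}), then set $Z:=e$ in that to obtain (\ref{ec1}), and combine the two, using invertibility of ${\mathcal E}^{-1}$, to reach (\ref{ec2}); and since specializing only gives implications in one direction, one must then check separately that (\ref{ec2}) implies the full four-variable identity back --- your ``reading the computation backwards yields the converse'' elides exactly this point. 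Second, what the computation naturally produces is a condition on $\mathcal F={\mathcal E}^{-1}$, namely $L_{{\mathcal E}^{-1}}(\circ)(Z,V)=[e,{\mathcal E}^{-1}]\circ Z\circ V$, not condition (\ref{char}) on $\mathcal E$. Passing between the two is not formal: it is the content of the paper's Lemma \ref{pregatitoare}, which uses $L_e(\circ)=0$ and the identity $[e,{\mathcal E}]\circ{\mathcal E}^{-2}=[{\mathcal E}^{-1},e]$. Neither the specializations nor this lemma appear in your proposal. (Your side remark that (\ref{char}) holds automatically when $X$ or $Y$ equals $e$ is correct, but it does not shorten any of this.)

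By contrast, your part ii) is correct and genuinely different from --- indeed slicker than --- the paper's argument. The paper proves ii) by verifying the characterization (\ref{char}) for $e$ on $(M,*,{\mathcal E})$, i.e.\ $L_e(*)(X,Y)=[{\mathcal E},e]*X*Y$, via (\ref{rel1}) with $Z:=e$ and the bracket identity from Lemma \ref{pregatitoare}. You avoid the characterization entirely: $e$ is $*$-invertible with $*$-inverse ${\mathcal E}\circ{\mathcal E}$, the product dual to $*$ determined by $e$ is $\circ$ itself, and since $(M,\circ,e)$ is an $F$-manifold by hypothesis, $e$ is an eventual identity for $(M,*,{\mathcal E})$ directly from the definition; involutivity of the duality follows at once. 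This is a valid proof of ii), needing only that $*$ is an $F$-manifold structure (which is the hypothesis of ii)), but it cannot repair part i): the equivalence between being an eventual identity and condition (\ref{char}) remains unproved in your submission.
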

Condition (\ref{char}) above may be seen as a generalization of
the notion of an Euler vector field. All invertible Euler vector
fields are eventual identities but not conversely. However,
eventual identities play a similar role. In this paper we study
$F$-manifolds with eventual identities and their relation with
some well-known constructions in the theory of Frobenius
manifolds.

The plan of the paper is the following. In Section
\ref{mainsection} we prove Theorem \ref{main} and we develop its
consequences. We remark that the duality for $F$-manifolds with
eventual identities developed in Theorem \ref{main} {\it ii)} is a
natural generalization of the well-known dualities for almost
Frobenius manifolds and for $F$-manifolds with compatible flat
structures \cite{dubrovin}, \cite{manin}. After proving Theorem
\ref{main} we show that any eventual identity on a product
$F$-manifold is a sum of eventual identities on the factors (a
similar decomposition holds for Euler vector fields \cite{hert}).
Using this fact we show that our duality for $F$-manifolds with
eventual identities is compatible with the local irreducible
decomposition of $F$-manifolds \cite{hert}. We end Section
\ref{mainsection} with examples and further properties of eventual
identities, some of them being already known for Euler vector
fields.

In Section \ref{F-riem} we add a new ingredient on our
$F$-manifold $(M, \circ , e ,{\mathcal E})$ with eventual
identity, namely a multiplication invariant metric $\tilde{g}$.
The eventual identity $\mathcal E$ together with $\tilde{g}$
determine, in a canonical way, a second metric $g$, defined like
the second metric of a Frobenius manifold. We prove that the
metrics $(g,\tilde{g})$ are almost compatible. Our main result in
this Section states that $(g, \tilde{g})$ are compatible, when
$(M, \circ , e,\tilde{g})$ is an almost Riemannian $F$-manifold,
i.e. the coidentity $\epsilon \in\Omega^{1}(M)$, which is the
$1$-form dual to the identity $e$, is closed. Similar results
already appear in the literature \cite{l-i}, with Euler vector
fields instead of eventual identities.

In Section \ref{dual} we show that our duality for $F$-manifolds
with eventual identities preserves the class of Riemannian
$F$-manifolds, which are almost Riemannian $F$-manifolds
satisfying an additional curvature condition. Riemannian
$F$-manifolds were introduced and studied in \cite{ita} and are
closely related to the theory of integrable systems of
hydrodynamic type.

In Section \ref{integrable} we apply
our results to the theory of integrable systems.

In Section \ref{ttstar-sect} we study the interactions between
$tt^{*}$-geometry, introduced for the first time in \cite{cecotti},
and our duality of $F$-manifolds with eventual identities.
$tt^{*}$-geometry shares many properties in commun with
Frobenius manifolds, its main ingredients being a
metric, a Higgs field and a real structure (the latter not
being present in the theory of Frobenius manifolds).
One can combine $tt^{*}$-geometry with Frobenius manifold theory
giving rise to new structures (like CDV-structures, $DChk$-structures, etc)
satisfying some complicated compatibility conditions, but which are
very natural in examples coming from singularity theory.
It is in this context that $F$-manifolds appear in $tt^{*}$-geometry.
We determine necessary and sufficient
conditions on the eventual identity which insure that the
class of harmonic Higgs bundles and $DChk$-structures (i.e. harmonic
Higgs bundles with compatible real structure) is preserved
by our duality for $F$-manifolds with eventual identities.

\section{Eventual identities and duality}\label{mainsection}

In this Section we prove Theorem \ref{main}. We begin with a
simple preliminary Lemma concerning invertible vector fields on
$F$-manifolds.

\begin{lem}\label{pregatitoare} Let $(M, \circ , e)$ be an
$F$-manifold and ${\mathcal E}$
an invertible vector field, with inverse ${\mathcal E}^{-1}.$  Assume that
\begin{equation}\label{e1}
L_{{\mathcal E}}(\circ ) (X, Y) = [e, {\mathcal E}]\circ X\circ Y,
\quad\forall X, Y\in {\mathcal X}(M).
\end{equation}
Then also
\begin{equation}\label{e2}
L_{{\mathcal E}^{-1}}(\circ ) (X, Y) = [e, {\mathcal E}^{-1}]
\circ X\circ Y,\quad
\forall X, Y\in {\mathcal X}(M).
\end{equation}
\end{lem}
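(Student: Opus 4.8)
The plan is to derive (\ref{e2}) from (\ref{e1}) by feeding the pair $(\mathcal E, \mathcal E^{-1})$ into the $F$-manifold axiom (\ref{fman}), exploiting the fact that their product is the identity $e$. First I would record two auxiliary facts. The first is that $L_e(\circ)=0$ on any $F$-manifold: substituting $X=Y=e$ into (\ref{fman}) and using $e\circ e = e$ together with the fact that $e$ acts as the identity for $\circ$ gives $L_e(\circ) = 2\,L_e(\circ)$, hence $L_e(\circ)=0$. The second is an algebraic relation between $[e,\mathcal E]$ and $[e,\mathcal E^{-1}]$: applying the Leibniz rule to $L_e(\mathcal E\circ \mathcal E^{-1}) = L_e(e) = 0$ and using $L_e(\circ)=0$ yields $[e,\mathcal E]\circ \mathcal E^{-1} + \mathcal E\circ [e,\mathcal E^{-1}] = 0$; multiplying by $\mathcal E^{-1}$ then produces
\[
[e,\mathcal E^{-1}] = -\,\mathcal E^{-1}\circ \mathcal E^{-1}\circ [e,\mathcal E].
\]

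Next I would apply (\ref{fman}) to the pair $(\mathcal E, \mathcal E^{-1})$. Since $\mathcal E\circ \mathcal E^{-1}=e$, the left-hand side is $L_e(\circ)=0$, giving
\[
0 = \mathcal E\circ L_{\mathcal E^{-1}}(\circ) + \mathcal E^{-1}\circ L_{\mathcal E}(\circ),
\]
where, as throughout, $W\circ L_Z(\circ)$ denotes post-multiplication of the $TM$-valued output by $W$. Multiplying this identity by $\mathcal E^{-1}$ and cancelling $\mathcal E^{-1}\circ \mathcal E = e$ in the output slot (legitimate because invertibility of $\mathcal E$ makes multiplication by $\mathcal E^{-1}$ a pointwise isomorphism) solves for the unknown:
\[
L_{\mathcal E^{-1}}(\circ) = -\,\mathcal E^{-1}\circ \mathcal E^{-1}\circ L_{\mathcal E}(\circ).
\]

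Finally I would substitute the hypothesis (\ref{e1}), $L_{\mathcal E}(\circ)(X,Y) = [e,\mathcal E]\circ X\circ Y$, into this expression to get $L_{\mathcal E^{-1}}(\circ)(X,Y) = -\,\mathcal E^{-1}\circ \mathcal E^{-1}\circ [e,\mathcal E]\circ X\circ Y$, and then invoke the relation from the first paragraph to rewrite the coefficient $-\,\mathcal E^{-1}\circ \mathcal E^{-1}\circ [e,\mathcal E]$ as $[e,\mathcal E^{-1}]$, which is precisely (\ref{e2}). The conceptual crux is the observation that $e=\mathcal E\circ\mathcal E^{-1}$ turns (\ref{fman}) into a linear relation coupling $L_{\mathcal E}(\circ)$ and $L_{\mathcal E^{-1}}(\circ)$, which is then inverted using invertibility of $\mathcal E$. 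I expect the only genuine subtlety to be bookkeeping: keeping straight that each $L_Z(\circ)$ is a $(2,1)$-tensor and that every $\circ$-multiplication by $\mathcal E^{\pm 1}$ acts on the output slot, so that the cancellations $\mathcal E^{-1}\circ \mathcal E\circ(-)=e\circ(-)=(-)$ are applied consistently there.
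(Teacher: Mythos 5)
Your proof is correct and follows essentially the same route as the paper: both establish $L_{e}(\circ)=0$, feed the pair $({\mathcal E},{\mathcal E}^{-1})$ into the $F$-manifold condition (\ref{fman}) to get the linear relation ${\mathcal E}\circ L_{{\mathcal E}^{-1}}(\circ)+{\mathcal E}^{-1}\circ L_{{\mathcal E}}(\circ)=0$, and then identify $-{\mathcal E}^{-2}\circ[e,{\mathcal E}]$ with $[e,{\mathcal E}^{-1}]$ via the Leibniz rule applied to $L_{e}({\mathcal E}\circ{\mathcal E}^{-1})=0$. The only difference is cosmetic bookkeeping in how that last identity is written out.
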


\begin{proof} The proof is a simple calculation.
Since $e = e\circ e$, the $F$-manifold condition
(\ref{fman}) with $X= Y :=e$ implies that
$L_{e}(\circ )=0$. Applying again (\ref{fman}) with
$X:= {\mathcal E}$ and $Y:= {\mathcal E}^{-1}$, we obtain:
$$
0= L_{{\mathcal E}\circ {\mathcal E}^{-1}}(\circ )
= {\mathcal E}\circ L_{{\mathcal E}^{-1}}(\circ ) +
{\mathcal E}^{-1}\circ L_{{\mathcal E}}(\circ ) .
$$
Combining this relation with (\ref{e1}) we get
$$
L_{{\mathcal E}^{-1}}(\circ )(X, Y) = {\mathcal E}^{-2}\circ
[{\mathcal E}, e]\circ X\circ Y,\quad\forall X, Y\in {\mathcal X}(M),
$$
where ${\mathcal E}^{-2}$ denotes ${\mathcal E}^{-1}\circ {\mathcal E}^{-1}.$
On the other hand,
$$
[e,{\mathcal E}]\circ {\mathcal E}^{-2} = \left(
L_{e}({\mathcal E})\circ {\mathcal E}^{-1}\right)\circ {\mathcal E}^{-1}
=\left( L_{e}(e) - {\mathcal E}\circ L_{e}({\mathcal E}^{-1})\right)
\circ {\mathcal E}^{-1} = [{\mathcal E}^{-1}, e]
$$
where we used $L_{e}(\circ ) =0.$ Our claim follows.
\end{proof}

Note that the construction of ${\mathcal E}^{-1}\,,$ whilst just linear algebra, requires the
inversion of a matrix, and hence ${\mathcal E}^{-1}$ is not defined at points of $M$ where a
certain determinant $\Sigma$ vanishes. Rather than defining a new manifold $M^\star\cong M \backslash \Sigma$
on which ${\mathcal E}^{-1}$ is defined we just assume that $M$ consists of points at which both $\mathcal E$ and
${\mathcal E}^{-1}$ are well defined.

After this preliminary result, we now prove Theorem \ref{main}
stated in the Introduction.\\

{\bf Proof of Theorem \ref{main}.} The multiplication $*$ is
commutative, associative, with identity
field $\mathcal E .$  Therefore $(M, *, {\mathcal E})$ is an
$F$-manifold if
and only if for any vector fields $Z, V\in {\mathcal X}(M)$,
\begin{equation}\label{dedem}
L_{Z*V}(*)(X, Y) = Z* L_{V}(*)(X, Y) + V* L_{Z}(*)(X, Y),\quad\forall X, Y
\in {\mathcal X}(M).
\end{equation}
We will show that (\ref{dedem}) is equivalent with (\ref{char}).
For this, we take the Lie derivative with respect to $Z$ of the relation
(\ref{star}). We get, by a straightforward computation,
\begin{equation}\label{rel1}
L_{Z}(*)(X, Y) = L_{Z}(\circ )({\mathcal E}^{-1}\circ X, Y)
+L_{Z}(\circ ) ({\mathcal E}^{-1}, X)\circ Y + [Z, {\mathcal E}^{-1}]\circ
X\circ Y.
\end{equation}
Using relation (\ref{rel1}) with $Z$ replaced by $Z*V = Z\circ
V\circ {\mathcal E}^{-1}$ and the $F$-manifold condition
(\ref{fman}) satisfied by the multiplication $\circ$, we get:
\begin{align*}
L_{Z*V} (*) (X, Y) &= {\mathcal E}^{-1} \circ Z\circ L_{V}(\circ )
({\mathcal E}^{-1}\circ X, Y) + {\mathcal E}^{-1}\circ V\circ L_{Z}(\circ )
({\mathcal E}^{-1}\circ X, Y) \\
& + Z\circ V \circ L_{{\mathcal E}^{-1}}(\circ )({\mathcal E}^{-1}\circ X, Y)
+{\mathcal E}^{-1}\circ Z\circ Y \circ L_{V}(\circ ) ({\mathcal E}^{-1}, X)\\
& + {\mathcal E}^{-1}\circ V\circ Y \circ L_{Z}(\circ ) ({\mathcal E}^{-1}, X)
+ Z\circ V\circ Y\circ L_{{\mathcal E}^{-1}}(\circ ) ({\mathcal E}^{-1}, X)\\
& - L_{{\mathcal E}^{-1}}({\mathcal E}^{-1}\circ Z\circ V)\circ X\circ Y.
\end{align*}
Combining this expression with the expressions of $L_{Z}(*)(X, Y)$
and $L_{V}(*)(X, Y)$ provided by (\ref{rel1}), we see that
(\ref{dedem}) holds if and only if
\begin{align*}
&X\circ Y\circ \left( L_{{\mathcal E}^{-1}}({\mathcal E}^{-1}\circ Z
\circ V) + {\mathcal E}^{-1} \circ Z\circ [V , {\mathcal E}^{-1}]
+{\mathcal E}^{-1}\circ V\circ [Z, {\mathcal E}^{-1}]\right) =\\
&Z\circ V\circ \left( L_{{\mathcal E}^{-1}}(\circ )
({\mathcal E}^{-1}\circ X, Y) +Y\circ L_{{\mathcal E}^{-1}}(\circ )
({\mathcal E}^{-1}, X)\right) .
\end{align*}
On the other hand, it can be checked that
\begin{align*}
&L_{{\mathcal E}^{-1}}({\mathcal E}^{-1}\circ Z
\circ V) + {\mathcal E}^{-1} \circ Z\circ [V , {\mathcal E}^{-1}]
+{\mathcal E}^{-1}\circ V\circ [Z, {\mathcal E}^{-1}]\\
&= L_{{\mathcal E}^{-1}}(\circ )({\mathcal E}^{-1}, Z)
\circ V + L_{{\mathcal E}^{-1}}
(\circ ) ({\mathcal E}^{-1}\circ Z, V) .
\end{align*}
Hence $*$ is the multiplication of an $F$-manifold
structure if and only if for any vector fields
$X, Y, Z, V\in {\mathcal X}(M)$,
\begin{align*}
&X\circ Y\circ\left( L_{{\mathcal E}^{-1}}(\circ )
({\mathcal E}^{-1}\circ Z, V)+L_{{\mathcal E}^{-1}}(\circ )
({\mathcal E}^{-1}, Z)\circ V\right) \\
&=Z\circ V\circ\left( L_{{\mathcal E}^{-1}}(\circ )
({\mathcal E}^{-1}\circ X, Y)
+L_{{\mathcal E}^{-1}}(\circ ) ({\mathcal E}^{-1}, X)\circ Y\right).\\
\end{align*}
Taking $X = Y := e$ it is easy to see that this relation
is equivalent with
\begin{equation}\label{ec}
L_{{\mathcal E}^{-1}}(\circ )({\mathcal E}^{-1}\circ Z, V)
+ L_{{\mathcal E}^{-1}}
(\circ ) ({\mathcal E}^{-1}, Z) \circ V = -2 {\mathcal E}^{-1}\circ
[{\mathcal E}^{-1}, e]\circ Z\circ V.
\end{equation}
We now simplify relation (\ref{ec}). For this, we take in
(\ref{ec}) $Z:= e$ and we obtain
\begin{equation}\label{ec1}
L_{{\mathcal E}^{-1}}(\circ )({\mathcal E}^{-1}, V) = - {\mathcal
E}^{-1} \circ [{\mathcal E}^{-1}, e]\circ V,\quad\forall V\in
{\mathcal X}(M).
\end{equation}
Combining (\ref{ec}) with (\ref{ec1}) we get:
\begin{equation}\label{ec2}
L_{{\mathcal E}^{-1}}(\circ ) (Z, V) = - [{\mathcal E}^{-1}, e]\circ
Z\circ V,\quad\forall Z, V\in {\mathcal X}(M).
\end{equation}
Conversely, it is clear that if (\ref{ec2}) is satisfied then (\ref{ec})
is satisfied as well. Therefore, relations (\ref{ec}) and (\ref{ec2})
are equivalent. We proved that  $*$ is the multiplication of an $F$-manifold
structure if and only if  (\ref{ec2}) holds. Our first claim follows
from Lemma \ref{pregatitoare}.

For our second claim, assume that $\mathcal E$ is an eventual
identity on an $F$-manifold $(M, \circ , e )$. We want to prove
that $e$ is an eventual identity for the $F$-manifold $(M, *,
{\mathcal E})$, where $*$ is related to $\circ$ by (\ref{star}).
Since the identity field of $*$ is $\mathcal E$, we need to show that
\begin{equation}
L_{e}(*) (X, Y) = [{\mathcal E}, e]* X*Y,\quad\forall X,
Y\in {\mathcal X}(M).
\end{equation}
Letting $Z:= e$ in (\ref{rel1}) and using $L_{e}(\circ )=0$
together with (\ref{star}), we get:
\begin{align*}
L_{e}(*) (X, Y) = [e, {\mathcal E}^{-1}]\circ X\circ Y
= \left( [e,{\mathcal E}^{-1}]\circ {\mathcal E}^{2}\right) * X*Y.
\end{align*}
Recall now from the
proof of Lemma \ref{pregatitoare} that $[e, {\mathcal
E}^{-1}]\circ {\mathcal E}^{2} =[{\mathcal E}, e].$ Our second
claim follows. The proof of Theorem \ref{main} is now completed.\\

Having found the characterization of eventual identities one may study how
such objects many be combined to form new eventual identities.

\begin{prop}\label{combine}{\it i)} Eventual identities form a subgroup of
the group of invertible vector fields on an $F$-manifold.

{\it ii)} The Lie bracket of two eventual identities is an
eventual identity, provided that is invertible.

{\it iii)} Let $(M_{1}\times M_{2}, \circ , e_{1}+e_{2})$ be the
product of two $F$-manifolds $(M_{1},\circ_{1}, e_{1})$ and
$(M_{2}, \circ_{2},e_{2})$, with multiplication defined by
\begin{equation}\label{product-mult}
(X_{1}, X_{2})\circ (Y_{1}, Y_{2}) = (X_{1}\circ_{1}
Y_{1},X_{2}\circ_{2} Y_{2}),
\end{equation}
for any $X_{1}, Y_{1}\in {\mathcal X}(M_{1})$ and $X_{2}, Y_{2}\in
{\mathcal X}(M_{2})$ (considered as vector fields on $M_{1}\times
M_{2}$). If ${\mathcal E}_{1}$ is an eventual identity on $(M,
\circ_{1}, e_{1})$ and $\mathcal E_{2}$ is an eventual identity on
$(M, \circ , e_{2})$, then $\mathcal E:= {\mathcal
E}_{1}+{\mathcal E}_{2}$ is an eventual identity on $(M_{1}\times
M_{2}, \circ , e_{1}+e_{2})$. Moreover, any eventual identity on
$(M_{1}\times M_{2}, \circ , e_{1}+e_{2})$ is obtained this way.

\end{prop}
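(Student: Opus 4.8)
The common tool throughout is the characterization (\ref{char}) from Theorem \ref{main}: a vector field $\mathcal{Z}$ is an eventual identity precisely when it is invertible and $L_{\mathcal{Z}}(\circ)(X,Y) = [e,\mathcal{Z}]\circ X\circ Y$. For i) and ii) the plan is to verify this identity directly for $\circ$-products and Lie brackets of eventual identities; for iii) the plan is to split a given eventual identity using the factor idempotents $e_1,e_2$ and check the condition factorwise. For i), the group operation is $\circ$ with neutral element $e$ (an eventual identity, since $L_e(\circ)=0$ and $[e,e]=0$) and inverse the $\circ$-inverse, which is an eventual identity by Lemma \ref{pregatitoare}. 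The only point requiring work is closure. Given eventual identities $\mathcal{E}_1,\mathcal{E}_2$, I would apply (\ref{fman}) to $L_{\mathcal{E}_1\circ\mathcal{E}_2}(\circ)$ and substitute (\ref{char}) for each factor, obtaining $(\mathcal{E}_1\circ[e,\mathcal{E}_2]+\mathcal{E}_2\circ[e,\mathcal{E}_1])\circ X\circ Y$. Since $L_e(\circ)=0$, the operator $L_e$ is a derivation of $\circ$, so $[e,\mathcal{E}_1\circ\mathcal{E}_2]=[e,\mathcal{E}_1]\circ\mathcal{E}_2+\mathcal{E}_1\circ[e,\mathcal{E}_2]$; this is exactly (\ref{char}) for $\mathcal{E}_1\circ\mathcal{E}_2$, whose invertibility is clear.

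For ii), I would use that $\mathcal{Z}\mapsto L_{\mathcal{Z}}$ is a Lie algebra homomorphism, so $L_{[\mathcal{E}_1,\mathcal{E}_2]}(\circ)=L_{\mathcal{E}_1}(L_{\mathcal{E}_2}(\circ))-L_{\mathcal{E}_2}(L_{\mathcal{E}_1}(\circ))$. Writing $a=[e,\mathcal{E}_2]$ and $b=[e,\mathcal{E}_1]$ and expanding $L_{\mathcal{E}_1}(L_{\mathcal{E}_2}(\circ))$ with the Leibniz rule and (\ref{char}) for both fields, a short calculation should give $(2\,a\circ b+[\mathcal{E}_1,a])\circ X\circ Y$. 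Antisymmetrizing in $1\leftrightarrow 2$, the symmetric terms $2\,a\circ b$ and $2\,b\circ a$ cancel by commutativity of $\circ$, leaving $([\mathcal{E}_1,[e,\mathcal{E}_2]]-[\mathcal{E}_2,[e,\mathcal{E}_1]])\circ X\circ Y$. By the Jacobi identity this bracket equals $[e,[\mathcal{E}_1,\mathcal{E}_2]]$, which is precisely (\ref{char}) for $[\mathcal{E}_1,\mathcal{E}_2]$; granting invertibility, $[\mathcal{E}_1,\mathcal{E}_2]$ is an eventual identity.

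For iii), note first that both $L_{\mathcal{E}}(\circ)(X,Y)$ and $[e,\mathcal{E}]\circ X\circ Y$ are tensorial in $X,Y$, so (\ref{char}) need only be checked on vector fields pulled back from the two factors, which locally span. The forward direction then rests on the product facts that cross products vanish ($e_1\circ W_2=0$, etc.) and that fields from different factors commute: evaluating (\ref{char}) for $\mathcal{E}_1+\mathcal{E}_2$ on such frame fields splits into the two factor conditions, which hold by hypothesis, and invertibility is witnessed by $\mathcal{E}_1^{-1}+\mathcal{E}_2^{-1}$. For the converse I would set $\mathcal{E}_i:=e_i\circ\mathcal{E}$; since $e_1,e_2$ are orthogonal idempotents ($e_i\circ e_j=\delta_{ij}e_i$), each $\mathcal{E}_i$ is pointwise valued in $TM_i$ and $\mathcal{E}=\mathcal{E}_1+\mathcal{E}_2$. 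What remains is to show $\mathcal{E}_i$ is the pullback of a vector field on $M_i$ (projectability) and an eventual identity there.

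Projectability is the main obstacle. The plan is to feed $X=e_1$ and a field $W_2$ pulled back from $M_2$ into (\ref{char}): the right-hand side vanishes because $e_1\circ W_2=0$, while the left-hand side collapses, via the tensor formula for $L_{\mathcal{E}}(\circ)$, to $-[\mathcal{E},e_1]\circ W_2-e_1\circ[\mathcal{E},W_2]$, yielding $[W_2,\mathcal{E}_1]=-[e_1,\mathcal{E}]\circ W_2$. Decomposing the right-hand side through the idempotents shows it is tangent to $M_2$, whereas $[W_2,\mathcal{E}_1]$ is tangent to $M_1$; the two subbundles meet only in the zero section, so both sides vanish. Letting $W_2$ range over pullbacks from $M_2$ forces the components of $\mathcal{E}_1$ to be independent of the $M_2$-coordinates, so $\mathcal{E}_1$ descends to $M_1$, and symmetrically $\mathcal{E}_2$ to $M_2$. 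Finally, restricting (\ref{char}) to fields pulled back from $M_1$ and discarding the $M_2$-contributions through the product structure recovers (\ref{char}) for $\mathcal{E}_1$ on $(M_1,\circ_1,e_1)$, while invertibility of $\mathcal{E}_1$ follows since $e_1\circ\mathcal{E}^{-1}$ is its $\circ_1$-inverse; this completes the decomposition.
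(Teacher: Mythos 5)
Your proof is correct and follows essentially the same route as the paper's. Parts i) and ii) coincide with the paper's argument: for ii) the paper simply cites the identity of Proposition 4.3 of \cite{HM}, which is exactly the expansion of $L_{[{\mathcal E}_1,{\mathcal E}_2]}(\circ)=[L_{{\mathcal E}_1},L_{{\mathcal E}_2}](\circ)$ that you carry out explicitly, and your computation (the cancellation of the symmetric $a\circ b$ terms and the final Jacobi identity) checks out. In part iii) you use the same decomposition ${\mathcal E}_k=e_k\circ{\mathcal E}$ and the same complementarity-of-subbundles mechanism as the paper; the only step to flag is ``yielding $[W_2,{\mathcal E}_1]=-[e_1,{\mathcal E}]\circ W_2$'', which silently replaces $e_1\circ[{\mathcal E},W_2]$ by $[e_1\circ{\mathcal E},W_2]$. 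Multiplication by $e_1$ and Lie bracket do not commute for general vector fields (take ${\mathcal E}=\partial_x$, $W=f(x)\partial_y$), so this deserves a line: it does hold here because $e_1\circ$ is the projection onto $TM_1$ determined by the product structure and $W_2$ is pulled back from $M_2$, equivalently $L_{W_2}(\circ)(e_1,{\mathcal E})=0$, as one checks in adapted coordinates (the flow of $W_2$ preserves the splitting and is trivial on the $M_1$ factor). The paper avoids this issue by instead proving $L_{{\mathcal E}_1}(\circ)(Z,e_2)=0$ from the $F$-manifold condition (\ref{fman}) applied to the product $e_1\circ{\mathcal E}$, and then expanding $[{\mathcal E}_1,Z]=[{\mathcal E}_1,Z\circ e_2]$, so that every resulting term is manifestly tangent to $M_2$. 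With that one-line justification added, your argument is complete and matches the paper's.
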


\begin{proof} {\it i)} If ${\mathcal E}_{1}$ and ${\mathcal E}_{2}$
are eventual identities then ${\mathcal E}_{1}\circ {\mathcal
E}_{2}$ is invertible and for any $X, Y\in {\mathcal X}(M)$,
\begin{align*}
L_{{\mathcal E}_{1}\circ{\mathcal E}_{2}}(\circ )(X, Y)&=
{\mathcal E}_{1}\circ L_{{\mathcal E}_{2}}(\circ )(X, Y) +
{\mathcal E}_{2}\circ L_{{\mathcal E}_{1}}(\circ )(X, Y)\\
&= \left({\mathcal E}_{1}\circ [e, {\mathcal E}_{2}]+
{\mathcal E}_{2}\circ [e,{\mathcal E}_{1}]\right)\circ X\circ Y\\
&= [e, {\mathcal E}_{1}\circ {\mathcal E}_{2}]\circ X\circ Y
\end{align*}
where in the last equality we used $L_{e}(\circ )=0.$
Moreover, from Lemma \ref{pregatitoare} and Theorem \ref{main},
if $\mathcal E$ is an eventual identity then also ${\mathcal E}^{-1}$ is
an eventual identity. Our first claim follows.

{\it ii)} Recall the following relation proved in Proposition
4.3 of \cite{HM}: for any vector fields $X, Y, Z, W\in {\mathcal X}(M)$,
\begin{align*}
L_{[X, Y]}(\circ )(Z, W) = [X, L_{Y}(\circ )(Z, W)] -L_{Y}(\circ )([X, Z],W)
-L_{Y}(\circ )(Z, [X,W])\\
-[Y, L_{X}(\circ )(Z,W)]+L_{X}(\circ )([Y,Z], W) +L_{X}(\circ )(Z, [Y,W]).
\end{align*}
Our second claim follows this relation and Theorem \ref{main}.

{\it iii)} It is straightforward to check that a sum of eventual
identities on the factors gives an eventual identity on the
product $(M_{1}\times M_{2}, \circ , e_{1}+e_{2}).$ The converse
is more involved and goes as follows (a similar argument has been
used for the decomposition of Euler vector fields on product
$F$-manifolds, see Theorem 2.11 of \cite{hert}). Let $\mathcal E$
be an eventual identity on $(M_{1}\times M_{2}, \circ ,
e_{1}+e_{2})$ and define ${\mathcal E}_{k} := e_{k}\circ {\mathcal
E}$ for $k\in \{ 1,2\} .$ From (\ref{product-mult}) ${\mathcal
E}_{k}$ is tangent to $M_{k}$ at any point of $M_{1}\times M_{2}$.
We will show that $\mathcal E_{1}$ is a vector field on $M_{1}$ (a
similar argument shows that ${\mathcal E}_{2}$ is a vector field
on $M_{2}$). For this, let $Z$ be a vector field on $M_{2}$. Note
that
\begin{equation}\label{i}
L_{{\mathcal E}_{1}}(\circ )(Z, e_{2}) = {\mathcal E}
\circ L_{e_{1}}(\circ )(Z, e_{2}) +e_{1}\circ L_{\mathcal E}(\circ )
(Z, e_{2})=0
\end{equation}
because $L_{e_{1}}(\circ )=0$ (easy check) and
$$
e_{1}\circ L_{\mathcal E}(\circ ) (Z, e_{2})=e_{1}\circ [e,
{\mathcal E}]\circ Z\circ e_{2}=0
$$
where we used condition (\ref{char}) on $\mathcal E$ and
$e_{1}\circ e_{2}=0$. From (\ref{i}) and $Z= Z\circ e_{2}$ we get
$$
[{\mathcal E}_{1}, Z]=L_{\mathcal E_{1}}(Z\circ e_{2})= [{\mathcal
E}_{1}, Z]\circ e_{2} + Z\circ [{\mathcal E}_{1}, e_{2}].
$$
It follows that $[{\mathcal E}_{1}, Z]$ is tangent to $M_{2}$ at
any point of $M_{1}\times M_{2}$. This holds for any vector field
$Z$ on $M_{2}$ and hence ${\mathcal E}_{1}$ is a vector field on
$M_{1}.$ Similarly, ${\mathcal E}_{2}$ is a vector field on
$M_{2}$. Since $\mathcal E$ is invertible on $(M, \circ
,e_{1}+e_{2})$, ${\mathcal E}_{1}$ is invertible on $(M,
\circ_{1}, e_{1})$ and ${\mathcal E}_{2}$ is invertible on $(M,
\circ_{2}, e_{2}).$ From
$$
[e, {\mathcal E}] = [e_{1}, {\mathcal E}_{1}] + [e_{2}, {\mathcal
E}_{2}]
$$
and
$$
L_{\mathcal E}(\circ )(X, Y) = [e, {\mathcal E}]\circ X\circ
Y,\quad \forall X, Y\in {\mathcal X}(M)
$$
we get
$$
L_{{\mathcal E}_{k}}(\circ_{k})(X, Y) = [e_{k}, {\mathcal
E}_{k}]\circ X\circ Y,\quad \forall X, Y\in {\mathcal
X}(M_{k}),\quad k\in \{ 1,2\} ,
$$
i.e. ${\mathcal E}_{k}$ is an eventual identity on the
$F$-manifold $(M_{k}, \circ_{k}, e_{k}).$ Our claim follows.

\end{proof}

\noindent By a result of Hertling \cite{hert}, any $F$-manifold
locally decomposes into a product of irreducible $F$-manifolds.
The decomposition of eventual identities on product $F$-manifolds
into sums of eventual identities on the factors gives a
compatibility between our duality for $F$-manifolds with eventual
identities and Hertling's decomposition of $F$-manifolds, as
follows.

\begin{thm}Let $(M, \circ , e)$ be an $F$-manifold with irreducible
decomposition
\begin{equation}\label{product}
(M, \circ ,e )\cong (M_{1}, \circ_{1}, e_{1})\times\cdots \times
(M_{l}, \circ_{l}, e_{l})
\end{equation}
near a point $p\in M$ and let $\mathcal E$ be an eventual identity on $(M,
\circ , e)$. Consider the decomposition
\begin{equation}\label{decompev}
{\mathcal E} = {\mathcal E}_{1}+\cdots + {\mathcal E}_{l}
\end{equation}
of $\mathcal E$ into a sum of eventual identities ${\mathcal
E}_{k}$ on the factors. Let $(M, *, {\mathcal E}, e)$ be the dual
of $(M, \circ , e, {\mathcal E})$ and $(M_{k}, *_{k}, {\mathcal
E}_{k},e_{k})$ the dual of $(M_{k}, \circ_{k}, e_{k}, {\mathcal
E}_{k})$, for any $1\leq k\leq l$. Then
\begin{equation}\label{fin-dec}
(M, *, {\mathcal E})\cong (M_{1}, *_{1}, {\mathcal
E}_{1})\times\cdots \times (M_{l}, *_{l}, {\mathcal E}_{l})
\end{equation}
is the irreducible decomposition of the $F$-manifold $(M, *,
{\mathcal E})$ near $p$.
\end{thm}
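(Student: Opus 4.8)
The plan is to show the two assertions packaged in the theorem separately: first, that the right-hand side of (\ref{fin-dec}) genuinely equals the dual $(M, *, \mathcal{E})$ under the given isomorphism, and second, that this product decomposition is the irreducible one. The first part should follow almost formally from the behaviour of the dual multiplication under products, so I would begin by unwinding the definitions. Starting from the product decomposition (\ref{product}) of $(M, \circ, e)$, I have $e = e_1 + \cdots + e_l$ with $e_k$ a vector field on $M_k$, and by Proposition \ref{combine} \emph{iii)} the eventual identity decomposes as $\mathcal{E} = \mathcal{E}_1 + \cdots + \mathcal{E}_l$ with each $\mathcal{E}_k$ an eventual identity on the factor $(M_k, \circ_k, e_k)$. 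The key computation is that $\mathcal{E}^{-1}$ likewise decomposes as $\mathcal{E}^{-1} = \mathcal{E}_1^{-1} + \cdots + \mathcal{E}_l^{-1}$, where $\mathcal{E}_k^{-1}$ is the $\circ_k$-inverse of $\mathcal{E}_k$ on $M_k$; this is immediate from the block-diagonal form (\ref{product-mult}) of $\circ$, since $\mathcal{E} \circ (\mathcal{E}_1^{-1} + \cdots + \mathcal{E}_l^{-1}) = \sum_k \mathcal{E}_k \circ_k \mathcal{E}_k^{-1} = \sum_k e_k = e$.

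With this in hand I would verify that the dual multiplications match. For vector fields $X = X_1 + \cdots + X_l$ and $Y = Y_1 + \cdots + Y_l$ adapted to the factors, the cross terms vanish because $X_j \circ Y_k = 0$ for $j \neq k$, so
\begin{equation*}
X * Y = X \circ Y \circ \mathcal{E}^{-1} = \sum_{k=1}^{l} X_k \circ_k Y_k \circ_k \mathcal{E}_k^{-1} = \sum_{k=1}^{l} X_k *_k Y_k.
\end{equation*}
This is exactly the statement that $*$ is the product multiplication of the $*_k$ under the identification (\ref{product}), with identity field $\sum_k \mathcal{E}_k = \mathcal{E}$, which is the content of the isomorphism (\ref{fin-dec}) at the level of $F$-manifolds. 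So the algebraic identification of $(M, *, \mathcal{E})$ with the product on the right is essentially routine once the decomposition of $\mathcal{E}^{-1}$ is noted.

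The genuine difficulty, and the part I expect to be the main obstacle, is establishing that (\ref{fin-dec}) is the \emph{irreducible} decomposition, not merely \emph{a} product decomposition. For this I would invoke the duality isomorphism from Theorem \ref{main} \emph{ii)} at both levels. By that theorem, $(M_k, *_k, \mathcal{E}_k, e_k)$ is the dual of $(M_k, \circ_k, e_k, \mathcal{E}_k)$, and irreducibility is a property of the germ of the $F$-algebra structure at $p$; since the dual $*_k$ is recovered from $\circ_k$ by an invertible change of multiplication (multiplication by the invertible field $\mathcal{E}_k^{-1}$), the idempotent structure of the tangent algebra, and hence the local decomposability, is preserved. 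Concretely, a local splitting of $(M_k, *_k)$ would, via the inverse duality $(M_k, *_k, \mathcal{E}_k, e_k) \to (M_k, \circ_k, e_k, \mathcal{E}_k)$, produce a local splitting of $(M_k, \circ_k)$, contradicting the irreducibility of the latter given in (\ref{product}). Thus each factor $(M_k, *_k, \mathcal{E}_k)$ remains irreducible, and by the uniqueness of Hertling's local irreducible decomposition, (\ref{fin-dec}) must be that decomposition. The care needed here is to confirm that irreducibility of an $F$-manifold germ depends only on the pointwise algebra of idempotents, which is unchanged under passing to the dual multiplication since $\mathcal{E}^{-1}$ is invertible at $p$; this invariance of the idempotent decomposition under the duality is the crux of the argument.
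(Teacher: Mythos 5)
Your proposal is correct and takes essentially the same route as the paper: the paper's proof is just two lines, invoking Proposition~\ref{combine} \emph{iii)} for the decomposition ${\mathcal E}={\mathcal E}_{1}+\cdots +{\mathcal E}_{l}$ and asserting that (\ref{fin-dec}) then follows from (\ref{product}) and (\ref{decompev}). The details you supply, namely that ${\mathcal E}^{-1}=\sum_{k}{\mathcal E}_{k}^{-1}$ so that $*$ decomposes blockwise, and that each dual factor stays irreducible because a splitting of $(M_{k},*_{k},{\mathcal E}_{k})$ would, via the involutive duality of Theorem~\ref{main} \emph{ii)}, split $(M_{k},\circ_{k},e_{k})$, are precisely the steps the paper leaves implicit.
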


\begin{proof} The decomposition (\ref{decompev}) was proved in
Proposition \ref{combine} {\it iii)}. The decomposition (\ref{fin-dec})
follows from (\ref{product}) and (\ref{decompev}).
\end{proof}

We end this Section with some more remarks and examples of
eventual identities.

\begin{rem}\label{remarks}{{\it i)} Condition (\ref{char}) which characterizes eventual
identities is equivalent to the apparently weaker condition
\begin{equation}\label{weak}
L_{\mathcal E}(\circ ) (X, Y ) = {v}\circ X\circ
Y,\quad\forall X, Y\in {\mathcal X}(M),
\end{equation}
for a vector field ${v}$. Indeed, if in relation (\ref{weak}) we
replace $X$ and $Y$ by $e$ we get ${v} = L_{\mathcal E}(\circ
)(e,e) .$ On the other hand,
$$
L_{\mathcal E}(\circ )(e,e) = [{\mathcal E},e\circ e] -
2[{\mathcal E},e]\circ e = [e, {\mathcal E}]
$$
and hence ${v}= [e, {\mathcal E}]$, as in (\ref{char}).
In particular, any invertible Euler vector field $E$ of weight $d$ is an
eventual identity and $[e, E]= de.$\\

{\it ii)} If $\mathcal E$ is an eventual identity on an
$F$-manifold $(M, \circ , e)$, then
\begin{equation}\label{euler-lie}
[{\mathcal E}^{n}, {\mathcal E}^{m}] = (m-n) {\mathcal
E}^{m+n-1}\circ [e, {\mathcal E}],\quad \forall m,n\in \mathbb{Z}.
\end{equation}
The proof is by induction. When $\mathcal E$ is Euler and $m,
n\geq 0$, (\ref{euler-lie}) was proved in \cite{manin-carte} (see
Theorem 5.6); when $n=-1$ and $m=0$ (\ref{euler-lie}) was
proved in Lemma \ref{pregatitoare}.\\

{\it iii)} Let $(M, \circ, e)$  be a semi-simple $F$-manifold with
canonical coordinates  $(u^{1}, \cdots , u^{n})$, i.e.
$$
\frac{\partial}{\partial u^{i}}\circ \frac{\partial}{\partial
u^{j}} = \delta_{ij} \frac{\partial}{\partial u^{j}}, \quad\forall
i, j
$$
and
$$
e = \frac{\partial~}{\partial u^{1}}+\cdots +
\frac{\partial~}{\partial u^{n}}.
$$
Any eventual identity is of the form
$$
{\mathcal E} = f_{1}\frac{\partial~}{\partial u^{1}} + \cdots +
f_{n}\frac{\partial~}{\partial u^{n}},
$$
where $f_{i}$ are smooth non-vanishing functions depending only on
$u^{i}.$\\

{\it iv)} Here is an example considered in \cite{hert}, when the
multiplication is not
semi-simple. Let $M:= \mathbb{R}^{2}$ with multiplication defined
by
$$
\frac{\partial}{\partial x^{1}}\circ\frac{\partial}{\partial
x^{i}}=\frac{\partial}{\partial x^{i}},\quad
\frac{\partial}{\partial x^{2}}\circ \frac{\partial}{\partial
x^{2}}=0, \quad i\in \{ 1,2\} .
$$
It can be checked that $\circ$ defines an $F$-manifold structure
and any eventual identity is of the form
$$
{\mathcal E} = f_{1}\frac{\partial}{\partial x^{1}} +
f_{2}\frac{\partial}{\partial x^{2}},
$$
where $f_{1}=f_{1}(x^{1})$ depends only on $x^{1}$ and is
non-vanishing.}

\end{rem}

\section{Eventual identities and compatible metrics}\label{F-riem}

The two metrics $g$ and $\tilde{g}$ on a Frobenius manifold have
the important property that they form a flat pencil, that is, the
metric $g_{\lambda}^{*}:= g^{*}+\lambda\tilde{g}^{*}$ is flat, for
all values of $\lambda\,.$ This condition results, via the
Dubrovin-Novikov theorem, to a bi-Hamiltonian structure. What is
important in this construction is not the flatness of the metrics
but their compatibility. Curved metrics can, via Ferapontov's
extension of the Dubrovin-Novikov theorem, define (non-local)
Hamiltonian structures but it is the compatibility of two such
metrics that will ensure a (non-local) bi-Hamiltonian structure.
In this Section we construct compatible pair of metrics on
$F$-manifolds with eventual identities.

We begin by recalling basic definitions and results on compatible
pair of metrics. First we fix the conventions we will use in this
and the following Sections.

\begin{notations}{\rm Let $g$ and $\tilde{g}$ be two metrics on a manifold $M$, with
associated pencil of inverse metrics $g_{\lambda}^{*}:=
g^{*}+\lambda\tilde{g}^{*}$ (assumed to be non-degenerate for any
$\lambda$). We denote by $g:TM\rightarrow T^{*}M$, $X\rightarrow
g(X)$ and $g^{*}:T^{*}M\rightarrow TM$, $\alpha \rightarrow
g^{*}(\alpha )$ the isomorphisms defined by raising and lowering
indices using $g$ and similar notations will be used for the
isomorphisms between $TM$ and $T^{*}M$ defined by $\tilde{g}$ and
$g_{\lambda}$. To simplify notations we shall often denote by
$X^{\flat}=\tilde{g}(X)$ the dual $1$-form of a vector field $X$
with respect to $\tilde{g}$ (it is important to note that
$X^{\flat}$ is the dual $1$-form using $\tilde{g}$ and not $g$,
since the metrics $g$ and $\tilde{g}$ will not play symmetric
roles). The Levi-Civita connections of $g$, $g_{\lambda}$ and
$\tilde{g}$ will be denoted by $\nabla$, $\nabla^{\lambda}$ and
$\tilde{\nabla}$ respectively; $R^{g}$, $R^{\lambda}$ and
$R^{\tilde{g}}$ and will denote the curvatures of $g$,
$g_{\lambda}$ and $\tilde{g}$.}
\end{notations}

\begin{defn} i) A pair $(g, \tilde{g})$ is called almost compatible
if
$$
g_{\lambda}^{*}(\nabla^{\lambda}_{X}\alpha )= g^{*}(\nabla_{X}\alpha )+
\lambda\tilde{g}^{*}(\tilde{\nabla}_{X}\alpha )
$$
for any $X\in {\mathcal X}(M)$, $\alpha\in\Omega^{1}(M)$ and
$\lambda$ constant.\\

ii) A pair $({g}, \tilde{g})$ is called compatible if
$(g,\tilde{g})$ are almost compatible and
\begin{equation}
g_{\lambda}^{*}(R^{\lambda}_{X, Y}\alpha ) = g^{*}(R^{g}_{X,
Y}\alpha ) +\lambda \tilde{g}^{*}( {R}^{\tilde{g}}_{X, Y}\alpha )
\end{equation}
for any $X, Y\in {\mathcal X}(M)$, $\alpha\in \Omega^{1}(M)$ and $\lambda$
constant.
\end{defn}

According to \cite{mok} (see also \cite{l-i} for a shorter proof)
the metrics $(g, \tilde{g})$ are almost compatible if and only if
the Nijenhuis tensor of $A:= g^{*}\tilde{g}\in\mathrm{End}(TM)$,
defined by
$$
N_{A}(X, Y) = -[AX, AY] +A\left( [AX, Y] + [X, AY]\right) -
A^{2}[X, Y],\quad X,Y\in{\mathcal X}(M)
$$
is identically zero. Moreover, according to Theorem 3.1 of
\cite{l-i}, if $(g, \tilde{g})$ are almost compatible then
$(g,\tilde{g})$ are compatible if and only if one of the following
equivalent conditions holds:
\begin{equation}\label{g1}
g^{*}(\tilde{\nabla}_{Y}\alpha - \nabla_{Y}\alpha ,
\tilde{\nabla}_{X}\beta -\nabla_{X}\beta ) =
g^{*}(\tilde{\nabla}_{X}\alpha - \nabla_{X}\alpha ,
\tilde{\nabla}_{Y}\beta -\nabla_{Y}\beta )
\end{equation}
or
\begin{equation}\label{g2}
\tilde{g}^{*}(\tilde{\nabla}_{Y}\alpha - \nabla_{Y}\alpha ,
\tilde{\nabla}_{X}\beta -\nabla_{X}\beta ) =
\tilde{g}^{*}(\tilde{\nabla}_{X}\alpha - \nabla_{X}\alpha ,
\tilde{\nabla}_{Y}\beta -\nabla_{Y}\beta ) ,
\end{equation}
for any vector fields $X, Y\in {\mathcal X}(M)$ and $1$-forms
$\alpha , \beta\in \Omega^{1}(M).$\\

We now turn to $F$-manifolds and we show in Proposition \ref{p}
bellow that an eventual identity on an $F$-manifold together with
a (multiplication) invariant metric determine a pair of almost
compatible metrics.
A metric $\tilde{g}$ on an $F$-manifold $(M, \circ , e )$ is
called invariant if
$$
\tilde{g}(X\circ Y, Z) = \tilde{g}(X, Y\circ Z),\quad\forall X, Y,
Z\in {\mathcal X}(M)
$$
or
$$
\tilde{g}(X, Y) = \epsilon (X\circ Y).
$$
where $\epsilon =\tilde{g}(e)$ is the coidentity. Thus $\tilde{g}$
is uniquely determined by the coidentity
$\epsilon\in\Omega^{1}(M)$ and invariant metrics on $(M, \circ , e
)$ are in bijective correspondence with $1$-forms on $M$.

\begin{prop}\label{p} Let $(M, \circ , e, \tilde{g}, {\mathcal E})$ be an $F$-manifold
together with an invariant metric $\tilde{g}$ and eventual
identity $\mathcal E$. Define a new metric $g$ by
\begin{equation}\label{newg}
{g}(X, Y) = \tilde{g}({\mathcal E}^{-1}\circ X, Y),\quad\forall X,
Y\in {\mathcal X}(M).
\end{equation}
Then $(g,\tilde{g})$ are almost compatible.\end{prop}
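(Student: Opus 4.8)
The plan is to reduce almost compatibility to the vanishing of the Nijenhuis tensor $N_A$ of $A = g^*\tilde g \in \mathrm{End}(TM)$, using the criterion from \cite{mok} quoted above. First I would compute $A$ explicitly. Since $g(X,Y) = \tilde g(\mathcal E^{-1}\circ X, Y)$, the raising/lowering relation $g^* = \tilde g^* \circ (\text{multiplication by }\mathcal E^{-1})^{-1}$ should give $A = g^*\tilde g = \mathcal E\,\circ\,(\,\cdot\,)$, i.e. $A$ is the operator of $\circ$-multiplication by $\mathcal E$. Concretely, one checks $\tilde g(AX, Y) = g(X,Y)$ forces $\tilde g(AX,Y) = \tilde g(\mathcal E^{-1}\circ X, Y)$; using invariance of $\tilde g$ this reads $\tilde g(AX,Y) = \tilde g(X, \mathcal E^{-1}\circ Y)$, and a short manipulation identifies $A X = \mathcal E\circ X$. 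So the whole statement becomes: the Nijenhuis tensor of multiplication by $\mathcal E$ vanishes, i.e. $N_{\mathcal E\circ}(X,Y) = 0$ for all $X,Y$.

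Next I would expand $N_A(X,Y)$ with $AX = \mathcal E\circ X$ and convert every Lie bracket into a Lie derivative of the multiplication. The key identity is the product rule
\[
[\mathcal E\circ X,\ \mathcal E\circ Y] = \mathcal E\circ[\mathcal E\circ X, Y] + \cdots
\]
organized through $L_{\mathcal E\circ X}(\circ)$ and $L_X(\circ)$; more precisely I would repeatedly use the definition $L_Z(\circ)(X,Y) = [Z, X\circ Y] - [Z,X]\circ Y - X\circ[Z,Y]$ to rewrite $[\mathcal E\circ X, \mathcal E\circ Y]$, $[\mathcal E\circ X, Y]$ and $[X, \mathcal E\circ Y]$. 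The eventual identity hypothesis enters through Theorem \ref{main}, which gives $L_{\mathcal E}(\circ)(X,Y) = [e,\mathcal E]\circ X\circ Y$, and through the $F$-manifold condition \eqref{fman} for $\circ$, which controls $L_{X\circ Y}(\circ)$ in terms of $L_X(\circ)$ and $L_Y(\circ)$. Associativity and commutativity of $\circ$ let me freely reorder all products, so the various $\mathcal E$-multiplications can be collected.

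The main obstacle I expect is bookkeeping: after substitution, $N_{\mathcal E\circ}(X,Y)$ becomes a sum of several terms involving $L_{\mathcal E}(\circ)$, $L_X(\circ)$, $L_Y(\circ)$ and plain brackets, each premultiplied by powers of $\mathcal E$, and the cancellation to zero is not apparent term by term. I would handle this by first proving the simplest case and then leveraging tensoriality: $N_A$ is a tensor in $X$ and $Y$, so it suffices to verify the identity on a convenient local frame, and by antisymmetry one may assume $X$ and $Y$ are $\circ$-products of basis fields. The cleanest route is probably to substitute the characterization \eqref{char} (together with its inverse form \eqref{e2} from Lemma \ref{pregatitoare}) directly and show that the terms containing $[e,\mathcal E]$ assemble, via associativity, into a symmetric expression in $X,Y$ that is therefore killed by the antisymmetrization built into $N_A$. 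Once the characterization of $\mathcal E$ is fed in, all remaining $L_X(\circ)$, $L_Y(\circ)$ terms should cancel against each other using $\eqref{fman}$, leaving $N_{\mathcal E\circ} \equiv 0$, which yields almost compatibility.
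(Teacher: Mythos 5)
Your proposal takes essentially the same route as the paper: the paper's proof also invokes the \cite{mok}/\cite{l-i} criterion, identifies $A=g^{*}\tilde{g}$ as the operator $X\mapsto \mathcal{E}\circ X$, expands $N_{\mathcal{E}\circ}(X,Y)$ via Lie derivatives of $\circ$, cancels terms with the $F$-manifold condition (\ref{fman}), and finds that what survives is $L_{\mathcal E}(\circ )({\mathcal E}, X)\circ Y - L_{\mathcal E}(\circ )({\mathcal E}, Y)\circ X = [e, {\mathcal E}]\circ{\mathcal E}\circ (X\circ Y - Y\circ X)=0$ by commutativity --- exactly the symmetric-expression-killed-by-antisymmetrization mechanism you describe. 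One small slip to fix when writing it up: the defining relation of $A=g^{*}\tilde{g}$ is $g(AX,Y)=\tilde{g}(X,Y)$, not $\tilde{g}(AX,Y)=g(X,Y)$ (the latter would yield $A=\mathcal{E}^{-1}\circ$); your final identification $AX=\mathcal{E}\circ X$ is nevertheless the correct one and is what the paper uses.
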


\begin{proof} From (\ref{newg}),
$$
g^{*}\tilde{g}(X) = {\mathcal E}\circ X,\quad \forall X\in TM
$$
Using the $F$-manifold condition (\ref{fman}) together with the
characterization (\ref{char}) of eventual identities, we get:

\begin{align*}
N_{{\mathcal E}\circ}(X, Y)&= -L_{{\mathcal E}\circ X}({\mathcal
E}\circ Y) +{\mathcal E}\circ \left( L_{X}({\mathcal E}\circ
Y)-L_{Y}({\mathcal E}\circ X)\right)
- {\mathcal E}^{2}\circ [X, Y]\\
&=-[{\mathcal E}\circ X, {\mathcal E}]\circ Y - [{\mathcal E}\circ
X, Y]\circ {\mathcal E} -
L_{{\mathcal E}\circ X}(\circ )({\mathcal E}, Y)\\
&+{\mathcal E}\circ\left( [X, {\mathcal E}]\circ Y +{\mathcal E}
\circ [X, Y] +L_{X}(\circ ) ({\mathcal E}, Y)
- [Y, {\mathcal E}]\circ X\right)\\
& - {\mathcal E}^{2}\circ [Y, X] - {\mathcal E}\circ L_{Y}(\circ )
({\mathcal E}, X)-{\mathcal E}^{2}\circ [X, Y]\\
&= L_{\mathcal E}({\mathcal E}\circ X) \circ Y+ L_{Y} ({\mathcal
E}\circ X)\circ {\mathcal E} -{\mathcal E}\circ L_{X}(\circ )
({\mathcal E}, Y)\\
& - X\circ L_{\mathcal E}(\circ ) ({\mathcal E}, Y) +{\mathcal
E}\circ Y
\circ[X, {\mathcal E}] +{\mathcal E}^{2}\circ [X, Y]\\
&+{\mathcal E}\circ L_{X}(\circ ) ({\mathcal E}, Y) - {\mathcal
E}\circ X
\circ [Y, {\mathcal E}] - {\mathcal E}^{2}\circ [Y, X]\\
& -{\mathcal E}\circ L_{Y}(\circ ) ({\mathcal E}, X)-{\mathcal
E}^{2}
\circ [X, Y]\\
&= L_{\mathcal E}(\circ )({\mathcal E}, X)\circ Y -
L_{\mathcal E}(\circ )({\mathcal E}, Y)\circ X\\
&= [e, {\mathcal E}]\circ{\mathcal E}\circ  (X\circ Y - Y\circ X)
=0,
\end{align*}
for any vector fields $X, Y\in {\mathcal X}(M)$. Our claim
follows.
\end{proof}

When the $F$-manifold $(M, \circ , e)$ is semi-simple, the pair
$(g, \tilde{g})$ of Proposition \ref{p} is semi-simple as well
and, being almost compatible,
$(g,\tilde{g})$ is automatically compatible \cite{mok,l-i}.
Without the semi-simplicity assumption, the pair
$(g,\tilde{g})$ is not always compatible. We are going to show
that $(g,\tilde{g})$ is compatible (without the semi-simplicity
assumption), provided that the coidentity associated to
$\tilde{g}$ is closed. To simplify terminology we introduce the
following definition.

\begin{defn}\label{defi} An almost Riemannian $F$-manifold
is an $F$-manifold $(M,
\circ , e,\tilde{g})$ together with an invariant metric
$\tilde{g}$ such that the coidentity $\epsilon \in\Omega^{1}(M)$
defined by
$$
\epsilon (X) := \tilde{g}(e, X),\quad\forall X\in TM
$$ is closed.
\end{defn}

There is a result of Hertling \cite{hert}, which states that the
closeness of the coidentity $\epsilon$ on an $F$-manifold $(M,
\circ , e,\tilde{g})$ with invariant metric $\tilde{g}$ is
equivalent with the total symmetry of the $(4,0)$-tensor field
\begin{equation}\label{sym}
(\tilde{\nabla}\circ )(X, Z, Y, V):=
\tilde{g}(\tilde{\nabla}_{X}(\circ )(Z, Y), V),
\end{equation}
or to the symmetry in the first two arguments (the symmetry in the
last three arguments being a consequence of the invariance of
$\tilde{g}$).

\begin{thm}\label{compth}
Let $(M, \circ , e, \tilde{g}, {\mathcal E})$ be an almost
Riemannian $F$-manifold with eventual identity $\mathcal E$.
Define a new metric $g$ by
$$
{g}(X, Y) = \tilde{g}({\mathcal E}^{-1}\circ X, Y),\quad\forall X,
Y\in {\mathcal X}(M).
$$
Then $(g,\tilde{g})$ are compatible.
\end{thm}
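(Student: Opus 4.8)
The plan is to verify one of the two equivalent compatibility criteria recorded just above the theorem, namely condition (\ref{g2}) (or equivalently (\ref{g1})), since Proposition \ref{p} already gives us almost compatibility for free. Thus the entire content of the theorem is the extra curvature-type condition, and the cleanest route is to compute the difference tensor of the two Levi-Civita connections and show the required symmetry. Concretely, I would introduce the $(1,2)$-tensor $D_X Y := \nabla_X Y - \tilde\nabla_X Y$, so that dually $\tilde\nabla_X\alpha - \nabla_X\alpha$ is governed by $D$, and then reduce (\ref{g2}) to a symmetry statement about $D$ paired against $\tilde g^{*}$.

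First I would obtain an explicit formula for $D$. Because $g(X,Y)=\tilde g(\mathcal E^{-1}\circ X,Y)$ and $\tilde g$ is $\circ$-invariant, the endomorphism $A=g^{*}\tilde g$ equals multiplication by $\mathcal E$, i.e. $A(X)=\mathcal E\circ X$. The standard formula for the difference of Levi-Civita connections of two metrics related by such an endomorphism expresses $D$ in terms of $\tilde\nabla A$ (equivalently $\tilde\nabla(\mathcal E\circ\,)$) and the metric $\tilde g$. The key simplification is that $\tilde\nabla A$ can be rewritten using the tensor $(\tilde\nabla\circ)$ from (\ref{sym}) together with $\tilde\nabla\mathcal E$; here the almost Riemannian hypothesis enters decisively, because the closedness of the coidentity $\epsilon$ makes $(\tilde\nabla\circ)(X,Z,Y,V)$ \emph{totally symmetric} in all four arguments, by Hertling's result quoted above. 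This total symmetry is exactly what will allow the difference tensor $D$, when contracted into (\ref{g2}), to be symmetric under the required swap of $X$ and $Y$.

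The main steps, in order, are: (1) record $A(X)=\mathcal E\circ X$ and express $D$ via $\tilde\nabla A$; (2) use invariance of $\tilde g$ and the eventual-identity characterization (\ref{char}) to rewrite $\tilde\nabla A$ in terms of $(\tilde\nabla\circ)$ and first derivatives of $\mathcal E$; (3) substitute into the left- and right-hand sides of (\ref{g2}) and expand; (4) invoke the total symmetry of $(\tilde\nabla\circ)$ to cancel the antisymmetric-in-$(X,Y)$ pieces, leaving the two sides equal. The identity $\tilde g(\tilde\nabla_X(\circ)(Z,Y),V)$ being symmetric in its first two slots $X\leftrightarrow Z$ is the algebraic engine; everything else is bookkeeping of $\circ$-multiplications and the symmetry of $\tilde g$ under $\circ$.

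The hard part will be step (3)–(4): organising the expansion so that the total symmetry of $(\tilde\nabla\circ)$ can actually be applied. The difference $\tilde\nabla_Y\alpha-\nabla_Y\alpha$ is a contraction of $D$ with $\alpha$, and after dualising with $\tilde g^{*}$ one obtains an expression that is bilinear in two copies of $D$; the swap $X\leftrightarrow Y$ in (\ref{g2}) must be matched against the $X\leftrightarrow Z$ symmetry of $(\tilde\nabla\circ)$, and these are different slots, so some care with which arguments are being symmetrised is needed. I expect that the terms involving $\tilde\nabla\mathcal E$ (as opposed to $\tilde\nabla\circ$) will pair off using only the commutativity of $\circ$ and the symmetry of $\tilde g$, much as in the vanishing Nijenhuis computation in the proof of Proposition \ref{p}, while the genuinely new input — closedness of $\epsilon$, hence total symmetry of $(\tilde\nabla\circ)$ — is what disposes of the remaining obstruction. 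If a direct expansion proves unwieldy, an alternative is to reduce to the equivalent formulation (\ref{g1}) and exploit that $g$ is likewise $*$-invariant for the dual structure, turning the whole argument into the statement of Proposition \ref{p} applied to the dual $F$-manifold $(M,*,\mathcal E)$ together with the self-duality of the almost Riemannian condition.
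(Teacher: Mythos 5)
Your skeleton coincides with the paper's: almost compatibility comes from Proposition \ref{p}, and the theorem is reduced to verifying (\ref{g2}) for the difference of the two Levi-Civita connections, with the almost Riemannian hypothesis entering through Hertling's total symmetry of $\tilde\nabla\circ$. The gap is in your steps (3)--(4), where you predict that (\ref{g2}) follows by expanding and letting the total symmetry of $\tilde\nabla\circ$ ``cancel the antisymmetric-in-$(X,Y)$ pieces'', with the $\tilde\nabla\mathcal E$-terms pairing off ``using only the commutativity of $\circ$ and the symmetry of $\tilde g$''. That is not the mechanism, and an expansion organised this way will stall. What the paper actually proves (its relation (\ref{new})) is a \emph{factorization} of the difference tensor: $\nabla_Y\alpha - \tilde\nabla_Y\alpha = Y^{\flat}\circ \mathcal S(\alpha)$ for an explicit $1$-form $\mathcal S(\alpha)$ given in (\ref{salpha}); i.e.\ the difference is $\circ$-multiplication by $Y^{\flat}$ on $T^{*}M$. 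Granting this, (\ref{g2}) is immediate and needs no symmetry of $\tilde\nabla\circ$ at all: by the $\circ$-invariance of $\tilde g^{*}$, both sides of (\ref{g2}) equal $\tilde g^{*}(X^{\flat}\circ Y^{\flat}\circ \mathcal S(\alpha), \mathcal S(\beta))$. The total symmetry, together with the eventual-identity hypothesis, is consumed in establishing the factorization, not in a terminal cancellation.

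Concretely, two inputs beyond total symmetry are indispensable there, and your sketch supplies neither. Writing $N(Z)=\tilde\nabla_{Z}\mathcal E^{-1}$, the difference-tensor formula produces, besides the $\tilde\nabla\circ$-terms, the combination $N(X)\circ Y + N(Y)\circ X - N^{t}(X\circ Y)$ (with $N^{t}$ the $\tilde g$-adjoint). Its $\tilde g$-symmetric part becomes a multiplication operator only because $L_{\mathcal E}\tilde g$ is multiplication invariant --- this is exactly where (\ref{char}) is used, by Lie-differentiating the invariance of $\tilde g$ --- while the $\tilde g$-skew part of $N$ does \emph{not} cancel: it survives inside $\mathcal S(\alpha)$ and is rendered harmless only by the overall factor $Y^{\flat}$. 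To extract that factor one must also convert $\tilde\nabla_{X}(\circ)(\mathcal E^{-1},Y)$, which total symmetry identifies with $\tilde\nabla_{\mathcal E^{-1}}(\circ)(X,Y)$, into $[e,\mathcal E^{-1}]\circ X\circ Y$ plus $N$-terms, using Lemma \ref{pregatitoare} and torsion-freeness; your plan never isolates this step, and without it the cross-terms between the skew part of $N$ and the rest are simply not symmetric under $X\leftrightarrow Y$. Finally, your fallback is circular: applying Proposition \ref{p} to the dual $F$-manifold $(M,*,\mathcal E, g)$ only returns almost compatibility again (the vanishing of a Nijenhuis tensor, a condition symmetric in the pair of metrics), and no duality argument can promote that to the curvature-level condition (\ref{g1})/(\ref{g2}) --- that promotion is precisely what the theorem asserts.
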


\begin{proof}
From Proposition \ref{p}, the metrics $(g, \tilde{g})$ are almost
compatible. To prove that $(g, \tilde{g})$ are compatible, it is
enough to show that (\ref{g2}) is satisfied (see our comments
above). The Koszul formula for the Levi-Civita $\nabla$ of $g$
translated to $T^{*}M$ gives
\begin{equation}\label{l1}
2g^{*}(\nabla_{Y}\alpha , \beta ) = - g^{*}(i_{Y}d\beta , \alpha )
+g^{*}(i_{Y}d\alpha , \beta ) + Yg^{*}(\alpha ,\beta )
-g([g^{*}\alpha , g^{*}\beta ],Y),
\end{equation}
where $\alpha , \beta\in \Omega^{1}(M)$ and $Y\in {\mathcal
X}(M).$ A similar expression holds for the Levi-Civita connection
$\tilde{\nabla}$ of $\tilde{g}$ on $T^{*}M$:
\begin{equation}\label{l2}
2\tilde{g}^{*}(\tilde{\nabla}_{Y}\alpha , \beta ) = -
\tilde{g}^{*}(i_{Y}d\beta , \alpha ) +\tilde{g}^{*}(i_{Y}d\alpha ,
\beta ) + Y\tilde{g}^{*}(\alpha ,\beta )
-\tilde{g}([\tilde{g}^{*}\alpha ,\tilde{g}^{*}\beta ],Y).
\end{equation}
Combining (\ref{l1}) and (\ref{l2}) and using that $(g,
\tilde{g})$ are almost compatible we get, by the argument
of Proposition 5.10 of \cite{l-i},
\begin{equation}\label{flat}
2g^{*}(\nabla_{Y}X^{\flat} -\tilde{\nabla}_{Y}X^{\flat},
Z^{\flat}) = (L_{\mathcal E}\tilde{g})(X\circ Y, Z)
+\tilde{g}(([e, {\mathcal E}]\circ X-2\tilde{\nabla}_{X}{\mathcal
E} )\circ Y,Z)
\end{equation}
where $X^{\flat},Z^{\flat}\in\Omega^{1}(M)$ correspond to
$X,Z\in{\mathcal X}(M)$ using the duality defined by $\tilde{g}$.
Now, for a vector field $V$, define a $1$-form $(L_{\mathcal
E}\tilde{g})(V)$ by
$$
(L_{\mathcal E}\tilde{g})(V)(Z):= (L_{\mathcal E}\tilde{g})(V, Z)
,\quad\forall Z\in {\mathcal X}(M).
$$
With this notation,
$$
(L_{\mathcal E}\tilde{g})(X\circ Y, Z) = (L_{\mathcal
E}\tilde{g})(X\circ Y)(Z).
$$
Since $L_{\mathcal E}\tilde{g}$ is multiplication invariant (this
follows by taking the Lie derivative with respect to $\mathcal E$
of $\tilde{g}(X\circ Y, Z) = \tilde{g}(X, Y\circ Z)$ and using
condition (\ref{char}) on $\mathcal E$), we obtain
\begin{equation}\label{flat1}
(L_{\mathcal E}\tilde{g})(X\circ Y) = X^{\flat}\circ
Y^{\flat}\circ (L_{\mathcal E}\tilde{g})(e)
\end{equation}
where $\circ$ is the induced multiplication on $T^{*}M$, obtained
by identifying $TM$ with $T^{*}M$ using $\tilde{g}$. Denoting
$\alpha := X^{\flat}$, from (\ref{flat}) and (\ref{flat1}) we get
\begin{equation}\label{new}
2(\nabla_{Y}\alpha - \tilde{\nabla}_{Y}\alpha )= Y^{\flat}\circ
{\mathcal E}^{-1,\flat} \circ\left( ((L_{\mathcal E}\tilde{g})(e)
+ [e, {\mathcal E}]^{\flat})\circ \alpha  -
2\tilde{\nabla}_{\tilde{g}^{*}\alpha}{\mathcal E^{\flat}}\right) .
\end{equation}
Since $\tilde{g}$ is invariant, $\tilde{g}^{*}$ is also invariant
(with respect to $\circ$ on $T^{*}M$) and relation (\ref{new})
implies that (\ref{g2}) is satisfied. Being almost compatible, the
metrics $(g,\tilde{g})$ are compatible.
\end{proof}

We end this Section by making some comments on Theorem
\ref{compth}. Similar results where proved in \cite{l-i}, with the
almost Riemannnian $F$-manifold replaced by a weak
$\mathcal F$-manifold $(M, \circ , e ,\tilde{g}, E)$, i.e. the
multiplication $\circ$ on $TM$ is commutative associative with
unity field $e$, $\tilde{g}$ is an invariant metric,
$E$ is an invertible Euler vector field which is also conformal-Killing
with respect to $\tilde{g}$
and the weak symmetry condition
\begin{equation}
(\tilde{\nabla }\circ )(E, Z, Y,V) = (\tilde{\nabla}\circ )(Z, E,
Y, V),\quad\forall  Y, Z,V \in {\mathcal X}(M)
\end{equation}
holds; in general, $\circ$ does not satisfy the integrability
condition (\ref{fman}), so a weak $\mathcal F$-manifold is not
always an $F$-manifold. We are going to show that a weak $\mathcal
F$-manifold which is also an $F$-manifold is an almost Riemannian
$F$-manifold. Thus, in the setting of $F$-manifolds, Theorem
\ref{compth} extends the statement about the compatibility of
metrics in Theorem 5.8 of \cite{l-i}, by replacing the Euler
vector field with an eventual identity.

\begin{lem} Let $(M, \circ , e ,{\mathcal E},\tilde{g})$ be an $F$-manifold
together with an invertible vector field $\mathcal E$ and
invariant metric $\tilde{g}$. Assume the weak symmetry
condition
\begin{equation}
(\tilde{\nabla }\circ )({\mathcal E}, Z, Y,V) =
(\tilde{\nabla}\circ )(Z, {\mathcal E}, Y, V),\quad\forall  Y, Z,V
\in {\mathcal X}(M)
\end{equation}
holds. Then $(M, \circ , e ,\tilde{g})$ is an almost Riemannian
$F$-manifold.
\end{lem}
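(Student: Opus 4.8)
The plan is to prove directly that the coidentity $\epsilon=\tilde{g}(e)$ is closed, since by Definition \ref{defi} this closedness is precisely the assertion that $(M,\circ ,e,\tilde{g})$ is almost Riemannian. Because $\tilde{\nabla}$ is torsion-free and metric, one has $d\epsilon (X,Y)=\tilde{g}(\tilde{\nabla}_X e,Y)-\tilde{g}(\tilde{\nabla}_Y e,X)$, so it is enough to understand the endomorphism $\theta :=\tilde{\nabla}e$, $\theta (Z)=\tilde{\nabla}_Z e$. I would deliberately \emph{not} go through Hertling's equivalence between closedness of $\epsilon$ and total symmetry of $(\tilde{\nabla}\circ )$; instead I will extract just enough from the weak symmetry hypothesis to pin down $\theta$ completely.

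The decisive move is to feed $Y=e$ into the weak symmetry condition. First note that for any $A,B\in\mathcal{X}(M)$, using only that $e$ is the unit,
\[
(\tilde{\nabla}_A\circ )(B,e)=\tilde{\nabla}_A(B\circ e)-(\tilde{\nabla}_A B)\circ e-B\circ\tilde{\nabla}_A e=-B\circ\theta (A).
\]
Hence the weak symmetry identity specialized to $Y=e$, namely $(\tilde{\nabla}_{\mathcal{E}}\circ )(Z,e)=(\tilde{\nabla}_Z\circ )(\mathcal{E},e)$, reduces to
\[
\mathcal{E}\circ\tilde{\nabla}_Z e=Z\circ\tilde{\nabla}_{\mathcal{E}}e,\qquad\forall Z\in\mathcal{X}(M).
\]
Now I invoke invertibility of $\mathcal{E}$: composing with $\mathcal{E}^{-1}$ and regrouping via associativity yields $\tilde{\nabla}_Z e=v\circ Z$ for the single vector field $v:=\mathcal{E}^{-1}\circ\tilde{\nabla}_{\mathcal{E}}e$. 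In other words $\theta =\tilde{\nabla}e$ is exactly multiplication by $v$.

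With this structural description the conclusion is immediate: for all $X,Y\in\mathcal{X}(M)$,
\[
d\epsilon (X,Y)=\tilde{g}(v\circ X,Y)-\tilde{g}(v\circ Y,X)=\tilde{g}(v,X\circ Y)-\tilde{g}(v,Y\circ X)=0,
\]
the middle equality being invariance of $\tilde{g}$ and the last commutativity of $\circ$. Thus $\epsilon$ is closed and $(M,\circ ,e,\tilde{g})$ is almost Riemannian. I expect the only genuine obstacle to be conceptual rather than computational: realizing that the $Y=e$ instance of the weak symmetry already suffices, and that invertibility of $\mathcal{E}$ upgrades the relation $\mathcal{E}\circ\theta (Z)=Z\circ\theta (\mathcal{E})$ into the statement that $\tilde{\nabla}e$ is a multiplication operator, after which invariance of $\tilde{g}$ forces the required symmetry for free. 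It is worth remarking that this argument uses neither the full force of the weak symmetry condition nor the $F$-manifold condition (\ref{fman}) itself.
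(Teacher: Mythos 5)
Your proof is correct, and it takes a genuinely different --- and more economical --- route than the paper's. The paper proves the lemma by invoking Hertling's identity (\ref{cheie}) (quoted from the proof of Theorem 2.15 of \cite{hert}), which relates the antisymmetrization of $\tilde{\nabla}\circ$ in its first two arguments to $d\epsilon$; substituting $X:=\mathcal{E}$ kills the left-hand side by the hypothesis, and then a chain of substitutions ($Z:=e$, then $V\mapsto V\circ Z$) together with the invertibility of $\mathcal{E}$ shows that $d\epsilon$ is multiplication invariant, whence, being skew-symmetric, it vanishes. You instead use only the $Y=e$ slice of the weak symmetry: the computation $(\tilde{\nabla}_A\circ)(B,e)=-B\circ\tilde{\nabla}_A e$ turns this slice into $\mathcal{E}\circ\tilde{\nabla}_Z e=Z\circ\tilde{\nabla}_{\mathcal{E}}e$, invertibility of $\mathcal{E}$ upgrades this to $\tilde{\nabla}_Z e=v\circ Z$ with $v=\mathcal{E}^{-1}\circ\tilde{\nabla}_{\mathcal{E}}e$, and closedness of $\epsilon$ then follows in one line from invariance of $\tilde{g}$ and commutativity of $\circ$. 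What your argument buys: it is self-contained (no appeal to \cite{hert}), it uses only a one-parameter slice of the hypothesis, and --- as you correctly note --- it never invokes the integrability condition (\ref{fman}), which enters the paper's proof implicitly because identity (\ref{cheie}) is established there for $F$-manifolds; so your proof in fact establishes the stronger statement that the conclusion holds for any manifold carrying a commutative, associative multiplication with unit and an invariant metric. What the paper's route buys: it stays inside the framework already set up around Theorem \ref{compth} (Hertling's equivalence between closedness of $\epsilon$ and symmetry of $\tilde{\nabla}\circ$), and it exhibits the multiplication invariance of $d\epsilon$ as the mechanism forcing it to vanish. Both proofs use the invertibility of $\mathcal{E}$ in an essential way.
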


\begin{proof}
We need to show that the coidentity $\epsilon =\tilde{g}(e)$ is
closed. It is known that on any $F$-manifold $(M, \circ ,
e,\tilde{g})$ with multiplication $\circ$, unity field $e$,
invariant metric $\tilde{g}$ and coidentity $\epsilon$, the tensor
fields $\tilde{\nabla}\circ$ and $d\epsilon$ are related by the
following identity (see the proof of Theorem 2.15 of \cite{hert}):
\begin{equation}\label{cheie}
2(\tilde{\nabla}\circ) (X, Z, Y,V) -2(\tilde{\nabla}\circ )(Z, X,
Y,V) = d\epsilon (Y\circ Z, X\circ V) - d\epsilon (X\circ Y,
Z\circ V) .
\end{equation}
Taking $X:= {\mathcal E}$ in (\ref{cheie}) and
using our hypothesis we get
\begin{equation}\label{de}
d\epsilon ({\mathcal E}\circ Y, Z\circ V) = d\epsilon  (Y\circ Z,
{\mathcal E}\circ V).
\end{equation}
With $Z:=e$, (\ref{de}) becomes
\begin{equation}\label{e}
d\epsilon ({\mathcal E}\circ Y, V) = d\epsilon (Y, {\mathcal
E}\circ V).
\end{equation}
Replacing in (\ref{e}) $V$ by $V\circ Z$ and using again
(\ref{de}) we get
\begin{equation}\label{inv}
d\epsilon (Y, {\mathcal E}\circ V\circ Z)= d\epsilon ({\mathcal
E}\circ Y, V\circ Z ) = d\epsilon (Y\circ Z, {\mathcal E}\circ V).
\end{equation}
Since ${\mathcal E}$ is invertible, relation (\ref{inv}) is
equivalent to
\begin{equation}\label{rep}
d\epsilon (Y, Z\circ V) = d\epsilon (Y\circ Z, V), \quad\forall Y,
Z, V\in {\mathcal X}(M),
\end{equation}
i.e. $d\epsilon$ is multiplication invariant. Being skew-symmetric,
$d\epsilon =0.$ Our claim follows.

\end{proof}

\section{Duality and Riemannian $F$-manifolds}\label{dual}

Riemannian $F$-manifolds were first introduced in the literature
in \cite{ita}. In this Section we prove that the class of
Riemannian $F$-manifolds is preserved by the duality between
$F$-manifolds with eventual identities. In the next Section we
apply this result to the theory of integrable systems.

\begin{defn} A Riemannian $F$-manifold is
an $F$-manifold $(M, \circ , e, \tilde{g})$
together with an invariant metric $\tilde{g}$ such that:\\

i) the coidentity $\epsilon=\tilde{g}(e)\in\Omega^{1}(M)$
is closed, i.e. $(M, \circ , e, \tilde{g})$  is an
almost Riemannian $F$-manifold.\\

ii) the curvature condition
\begin{equation}\label{riem}
Z\circ R^{\tilde{g}}(V, Y)(X) + Y\circ R^{\tilde{g}}(Z, V)(X) +
V\circ R^{\tilde{g}}(Y, Z)(X)=0,
\end{equation}
is satisfied, for any $X, Y, Z,V\in {\mathcal X}(M).$
\end{defn}

Our main result in this Section is the following Theorem.

\begin{thm}\label{r} Let $(M, \circ , e, \tilde{g}, {\mathcal E})$ be an $F$-manifold
with invariant metric $\tilde{g}$ and eventual identity
$\mathcal E$. Define a second metric $g$ by
\begin{equation}\label{f}
{g}(X, Y)= \tilde{g}({\mathcal E}^{-1}\circ X, Y),\quad\forall X,
Y\in {\mathcal X}(M)
\end{equation}
and let $(M, *, {\mathcal E},e)$ be the dual of  $(M, \circ , e,
{\mathcal E}).$ Then $(M, \circ , {e}, \tilde{g})$ is a Riemannian
$F$-manifold if and only if $(M, *, {\mathcal E}, g)$ is a
Riemannian $F$-manifold.
\end{thm}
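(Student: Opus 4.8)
The plan is to exploit the involutive symmetry of the duality so as to prove only one implication, and to reduce the curvature condition (\ref{riem}) to a form adapted to multiplication by the unit. First I would record the relevant structural facts. Since $\tilde g$ is invariant, $g(X,Y)=\epsilon(\mathcal E^{-1}\circ X\circ Y)$ with $\epsilon=\tilde g(e)$, and from this expression one checks at once that $g$ is invariant both for $\circ$ and for $*$, and that the coidentity of the dual structure $(M,*,\mathcal E,g)$, namely $g(\mathcal E,\cdot)=\tilde g(\mathcal E^{-1}\circ\mathcal E,\cdot)=\tilde g(e,\cdot)$, is exactly the same $1$-form $\epsilon$. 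Hence condition (i) (closedness of the coidentity) is literally identical for $(M,\circ,e,\tilde g)$ and for $(M,*,\mathcal E,g)$. Moreover, using that the $*$-inverse of $e$ is $\mathcal E^{2}$ together with Theorem \ref{main} ii), a one-line computation shows that the dual of $(M,*,\mathcal E,g,e)$ is again $(M,\circ,e,\tilde g,\mathcal E)$ (in particular the twice-dualized metric $g(\mathcal E\circ\,\cdot\,,\cdot)$ equals $\tilde g$). Thus the correspondence is an involution, and it suffices to prove the single implication ``$(M,\circ,e,\tilde g)$ Riemannian $\Rightarrow(M,*,\mathcal E,g)$ Riemannian''.

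Next I would simplify the curvature condition. Setting $V:=e$ in (\ref{riem}) and using $R^{\tilde g}(Z,e)=-R^{\tilde g}(e,Z)$ and $e\circ W=W$, one obtains $R^{\tilde g}(Y,Z)X=Y\circ R^{\tilde g}(e,Z)X-Z\circ R^{\tilde g}(e,Y)X$; conversely this reduced identity implies the full cyclic relation (\ref{riem}) by a direct cancellation. So (\ref{riem}) is equivalent to expressing the entire curvature of $\tilde g$ through $R^{\tilde g}(e,\cdot)$ and the multiplication, and the corresponding dual condition reads $R^{g}(Y,Z)X=\mathcal E^{-1}\circ\big(Y\circ R^{g}(\mathcal E,Z)X-Z\circ R^{g}(\mathcal E,Y)X\big)$, where $\mathcal E$ is now the $*$-unit. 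This reformulation isolates exactly the vectors $e$ and $\mathcal E$ that appear in the difference of the two Levi-Civita connections.

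The heart of the argument, and the step I expect to be the main obstacle, is to relate the two curvatures $R^{g}$ and $R^{\tilde g}$. Assuming condition (i), Theorem \ref{compth} guarantees that $(g,\tilde g)$ are compatible, and formula (\ref{new}) gives the difference tensor $D_Y:=\nabla_Y-\tilde\nabla_Y$ explicitly as a multiplication operator assembled from $\mathcal E^{-1,\flat}$, $[e,\mathcal E]^{\flat}$, $(L_{\mathcal E}\tilde g)(e)$ and $\tilde\nabla\mathcal E$. Substituting $D$ into the standard change-of-connection identity $R^{g}(Y,Z)=R^{\tilde g}(Y,Z)+(\tilde\nabla_Y D)_Z-(\tilde\nabla_Z D)_Y+[D_Y,D_Z]$ expresses $R^{g}$ as $R^{\tilde g}$ plus correction terms. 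I would then insert this into the reduced dual condition of the previous paragraph and show, using the $F$-manifold condition (\ref{fman}), the characterization (\ref{char}) of the eventual identity, the invariance of $\tilde g$, and the closedness of $\epsilon$ (equivalently the total symmetry of $\tilde\nabla\circ$), that the correction terms reorganize so that the reduced $\tilde g$-identity passes exactly into the reduced $g$-identity. The genuine difficulty is the bookkeeping of these corrections: one must verify that the pieces built from $\tilde\nabla\mathcal E$ and $[e,\mathcal E]$ combine into precisely the multiplicative factor $\mathcal E^{-1}\circ$ that converts $\circ$ into $*$, which is what turns the $e$-reduced condition into the $\mathcal E$-reduced condition.

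Putting the three steps together, condition (i) transfers trivially because the coidentity is unchanged, and condition (ii) transfers by the curvature computation; hence $(M,\circ,e,\tilde g)$ Riemannian implies $(M,*,\mathcal E,g)$ Riemannian. The reverse implication is then immediate from the involutivity of the duality established in the first step, which proves the asserted equivalence.
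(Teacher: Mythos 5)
Your structural steps are correct and mirror the paper's: the coidentity of $(M,*,\mathcal{E},g)$ is again $\epsilon$, so condition (i) transfers verbatim; the duality is an involution (the $*$-inverse of $e$ is $\mathcal{E}\circ\mathcal{E}$, and the twice-dualized data recovers $(\circ,\tilde{g})$), so one implication suffices; and your reduction of (\ref{riem}) to the $e$-contracted identity $R^{\tilde{g}}(Y,Z)X = Y\circ R^{\tilde{g}}(e,Z)X - Z\circ R^{\tilde{g}}(e,Y)X$ is a genuine equivalence. But the heart of the theorem --- that the curvature condition actually passes from $\tilde{g}$ to $g$ --- is exactly the step you do not carry out: you write down the change-of-connection formula and then assert that the correction terms ``reorganize'', explicitly flagging the bookkeeping as the main obstacle. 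That bookkeeping \emph{is} the proof; as it stands you have a plan for the crucial step, not an argument.

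Moreover, your decision to contract with the unit first makes the missing step harder than it needs to be. The paper never reduces: it works with the full cyclic sum transported to $T^{*}M$ and first establishes (\ref{legat}), which says that under condition (i) (total symmetry of $\tilde{\nabla}\circ$) the curvature difference has the special shape $R^{g}(X,Y)\alpha - R^{\tilde{g}}(X,Y)\alpha = Q(\alpha,Y)\circ X^{\flat} - Q(\alpha,X)\circ Y^{\flat}$. Once the difference has this form, its contribution to the cyclic sum $Z^{\flat}\circ(\cdot) + Y^{\flat}\circ(\cdot) + V^{\flat}\circ(\cdot)$ cancels identically by mere commutativity of $\circ$, with no knowledge of $Q$ required; hence (\ref{doi}) holds for $\tilde{g}$ if and only if it holds for $g$, and lowering with $g$ instead of $\tilde{g}$ (i.e.\ setting $\alpha=g(X)$, which inserts the factor $\mathcal{E}^{-1,\flat}$) converts the $\circ$-cyclic identity into the $*$-cyclic one. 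In your reduced formulation the two sides are contracted with \emph{different} vectors ($e$ on the $\tilde{g}$-side, $\mathcal{E}$ on the $g$-side), which destroys precisely the cyclic symmetry that makes this cancellation automatic, so you would genuinely have to trace the detailed structure of the corrections built from $\tilde{\nabla}\mathcal{E}$, $[e,\mathcal{E}]$ and $L_{\mathcal{E}}\tilde{g}$. To close the gap, prove the analogue of (\ref{legat}) first and keep the full cyclic sum; the reduction to the $e$-contracted identity is then unnecessary.
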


\begin{proof} From (\ref{f}), the coidentities of $(M, \circ , e, \tilde{g})$
and $(M, *, {\mathcal E}, g)$ coincide. Thus $(M, \circ , {e},
\tilde{g})$ is an almost Riemannian $F$-manifold if and only if
$(M, *, {\mathcal E}, g)$ is an almost Riemannian $F$-manifold.

Assume now that $(M, \circ , e, \tilde{g})$ is a Riemannian
$F$-manifold. By our comments from the previous Section, the
tensor field $\tilde{\nabla}\circ$ is totally symmetric. With the
conventions from the proof of Theorem \ref{compth}, the total
symmetry of $\tilde{\nabla }\circ$ and relation (\ref{new}),
together with an easy curvature computation show that the
curvatures of $g$ and $\tilde{g}$ on $T^{*}M$ are related by
\begin{equation}\label{legat}
R^{g}(X, Y)(\alpha)= R^{\tilde{g}}(X, Y)(\alpha ) + Q (\alpha , Y)
\circ X^{\flat} - Q(\alpha , X)\circ Y^{\flat},
\end{equation}
where
$$
Q(\alpha , X):= {\mathcal S}({\mathcal S}(\alpha )\circ X^{\flat})
-\tilde{\nabla}_{X}({\mathcal S})(\alpha ),\quad\forall\alpha\in T^{*}M,
\quad\forall X\in TM
$$
and
\begin{equation}\label{salpha}
{\mathcal S}(\alpha ) :=\frac{1}{2}{\mathcal E}^{-1,\flat}
\circ\left( ((L_{\mathcal E}\tilde{g})(e) + [e, {\mathcal
E}]^{\flat})\circ \alpha  -
2\tilde{\nabla}_{\tilde{g}^{*}\alpha}{\mathcal E^{\flat}}\right) .
\end{equation}
(Recall that $TM$ and $T^{*}M$ are identified using $\tilde{g}$ and
$\circ$ above denotes the induced multiplication on $T^{*}M$).
Since $(M, \circ , e, \tilde{g})$ is a Riemannian $F$-manifold,
relation (\ref{riem}) holds. Translated to $T^{*}M$, it gives
\begin{equation}\label{doi}
Z^{\flat}\circ R^{\tilde{g}}(V, Y)(\alpha ) + Y^{\flat}\circ
R^{\tilde{g}}(Z, V)(\alpha ) + V^{\flat}\circ R^{\tilde{g}}(Y,
Z)(\alpha )=0,
\end{equation}
for any vector fields $Y$, $Z$ and $V$ and covector $\alpha .$
Using (\ref{legat}), relation (\ref{doi}) becomes
\begin{equation}\label{doi1}
Z^{\flat}\circ R^{{g}}(V, Y)(\alpha ) + Y^{\flat}\circ R^{{g}}(Z,
V)(\alpha ) + V^{\flat}\circ R^{{g}}(Y, Z)(\alpha )=0.
\end{equation}
Take in (\ref{doi1}) $\alpha := g(X)$. Note that
$$
Z^{\flat}\circ R^{g}(V, Y)(\alpha ) = Z^{\flat}\circ g(R^{g}(V,
Y)(X))= Z^{\flat}\circ {\mathcal E}^{-1,\flat}\circ R^{g}(V,
Y)(X)^{\flat}
$$
and similarly for $Y^{\flat}\circ R^{g}(Z, V)(\alpha )$ and
$V^{\flat}\circ R^{g}(Y, Z)(\alpha )$. On $TM$ relation (\ref{doi1}) becomes
\begin{equation}
{\mathcal E}^{-1}\circ \left(Z\circ  R^{{g}}(V, Y)(X) + Y\circ
R^{{g}}(Z, V)(X)+ V\circ R^{{g}}(Y, Z)(X)\right) =0
\end{equation}
for any vector fields $X, Y, Z, V$, or
\begin{equation}
Z* R^{{g}}(V, Y)(X) + Y * R^{{g}}(Z, V)(X) + V* R^{{g}}(Y,
Z)(X)=0.
\end{equation}
We proved that $(M, *, {\mathcal E}, g)$ is a Riemannian
$F$-manifold. Our claim follows.
\end{proof}

\section{Applications to integrable
systems}\label{integrable}

There is a  close relationship between $F$-manifolds and the
theory of integrable systems of hydrodynamic type. In particular
we draw together various results of \cite{ita} into the following
theorem.

\begin{thm}
Consider an almost Riemannian  $F$-manifold $(M,\circ,e,{\tilde{g}})\,.$ If $\tilde X$ and $\tilde Y$ are two vector fields which
satisfy the condition
\begin{equation}
({\tilde\nabla}_Z {\tilde X})\circ V = ({\tilde\nabla}_V {\tilde
X})\circ Z \qquad \forall\,V\,,Z \in {\mathcal X}(M)
\label{flowcondition}
\end{equation}
then the associated flows
\begin{eqnarray*}
U_t  & = & {\tilde X} \circ U_x \,,\\
U_\tau & = & {\tilde Y} \circ U_x
\end{eqnarray*}
commute. Moreover, for arbitrary vector fields $Y\,,V\,,Z
\in{\mathcal X}(M)$ the identity
\[
Z \circ R^{\tilde g}(V,Y) ({\tilde X}) + V \circ R^{\tilde g}(Y,Z)
({\tilde X}) + Y \circ R^{\tilde g}(Z,V)({\tilde X})  = 0
\]
holds for any solution ${\tilde X}$ of (\ref{flowcondition}).
\end{thm}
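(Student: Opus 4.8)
The plan is to prove both assertions by covariant cross-differentiation with respect to the Levi-Civita connection $\tilde\nabla$ of $\tilde g$, the decisive structural input in each case being that on an almost Riemannian $F$-manifold the tensor $\tilde\nabla\circ$ is totally symmetric (Hertling's result recalled just before Theorem \ref{compth}). For the commutativity I read ``commute'' as compatibility of the overdetermined system, i.e. $\partial_\tau U_t=\partial_t U_\tau$ after the flow equations have been substituted. Since $\tilde\nabla$ is torsion-free its Christoffel corrections are symmetric and drop out of $\partial_\tau U_t-\partial_t U_\tau$, so it suffices to prove $\tilde\nabla_{\partial_\tau}U_t=\tilde\nabla_{\partial_t}U_\tau$. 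First I would expand $\tilde\nabla_{\partial_\tau}(\tilde X\circ U_x)$ by the Leibniz rule for the tensor $\circ$, replace $\tilde\nabla_{\partial_\tau}U_x$ by $\tilde\nabla_{\partial_x}U_\tau$ (torsion-freeness again) and substitute $U_\tau=\tilde Y\circ U_x$; the same computation with $t,\tau$ and $\tilde X,\tilde Y$ interchanged produces $\tilde\nabla_{\partial_t}U_\tau$.

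Writing $W:=U_x$, the difference $\tilde\nabla_{\partial_\tau}U_t-\tilde\nabla_{\partial_t}U_\tau$ loses its top-order term $\tilde X\circ\tilde Y\circ\tilde\nabla_{\partial_x}W$ at once by commutativity of $\circ$, and the remainder splits into a group carrying $\tilde\nabla\tilde X,\tilde\nabla\tilde Y$ and a group carrying $\tilde\nabla\circ$. The first group cancels using (\ref{flowcondition}) for both fields: for instance, taking $Z:=W$ and $V:=\tilde X\circ W$ in the flow condition for $\tilde Y$ and using commutativity of $\circ$ gives $\tilde X\circ(\tilde\nabla_W\tilde Y)\circ W=(\tilde\nabla_{\tilde X\circ W}\tilde Y)\circ W$, which pairs off against the symmetric contribution from the other flow. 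The second group is where the almost Riemannian hypothesis is essential: covariantly differentiating the associativity identity of $\circ$ gives $(\tilde\nabla_A\circ)(a\circ b,d)+(\tilde\nabla_A\circ)(a,b)\circ d=(\tilde\nabla_A\circ)(a,b\circ d)+a\circ(\tilde\nabla_A\circ)(b,d)$, and combining this with the total symmetry of $\tilde\nabla\circ$ (which lets me exchange the differentiation slot with an argument slot) forces the remaining $\tilde\nabla\circ$ terms to cancel. This yields $\tilde\nabla_{\partial_\tau}U_t=\tilde\nabla_{\partial_t}U_\tau$, hence commutativity.

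For the curvature identity I would use only the flow condition for $\tilde X$. Set $B(Z,V):=(\tilde\nabla_Z\tilde X)\circ V$; then (\ref{flowcondition}) says $B$ is symmetric. Differentiating $B$ covariantly in a direction $Y$ and applying the Ricci identity $(\tilde\nabla^2\tilde X)(Y,Z)-(\tilde\nabla^2\tilde X)(Z,Y)=R^{\tilde g}(Y,Z)\tilde X$, the symmetry of $B$ yields $(\tilde\nabla^2\tilde X)(Y,Z)\circ V-(\tilde\nabla^2\tilde X)(Y,V)\circ Z=(\tilde\nabla_Y\circ)(\tilde\nabla_V\tilde X,Z)-(\tilde\nabla_Y\circ)(\tilde\nabla_Z\tilde X,V)$. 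Summing over the three cyclic permutations of $(Y,Z,V)$, the left-hand sides assemble, after reordering by commutativity of $\circ$, into exactly $Z\circ R^{\tilde g}(V,Y)\tilde X+V\circ R^{\tilde g}(Y,Z)\tilde X+Y\circ R^{\tilde g}(Z,V)\tilde X$, while the six right-hand terms cancel pairwise once the total symmetry of $\tilde\nabla\circ$ is used to identify, e.g., $(\tilde\nabla_Y\circ)(\tilde\nabla_V\tilde X,Z)$ with $(\tilde\nabla_Z\circ)(\tilde\nabla_V\tilde X,Y)$.

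I expect the main difficulty to be organisational rather than conceptual: keeping the numerous $\circ$-products and covariant derivatives ordered, and carefully distinguishing the cancellations that need only the always-valid symmetries (commutativity and associativity of $\circ$, and symmetry of $\tilde\nabla\circ$ in its two product arguments) from those that genuinely require the full symmetry of $\tilde\nabla\circ$ supplied by the closed-coidentity (almost Riemannian) hypothesis. The non-routine inputs are the torsion-freeness of $\tilde\nabla$, the covariant derivative of associativity, and Hertling's identification of a closed coidentity with the total symmetry of $\tilde\nabla\circ$; everything else is multilinear bookkeeping.
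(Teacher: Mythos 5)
Your proposal is correct, but there is nothing in the paper to compare it against: the paper states this theorem with no proof at all, explicitly presenting it as a compilation of results from the reference \cite{ita} (Lorenzoni--Pedroni--Raimondo). Your argument therefore supplies what the paper only cites, and both halves check out. For the commutativity, the reduction of $\partial_\tau U_t-\partial_t U_\tau$ to $\tilde\nabla_{\partial_\tau}U_t-\tilde\nabla_{\partial_t}U_\tau$ is legitimate (the Christoffel corrections cancel by torsion-freeness), the expansion produces exactly the two groups you describe, the $\tilde\nabla\tilde X,\tilde\nabla\tilde Y$ group cancels by condition (\ref{flowcondition}) applied to \emph{both} fields (note the hypothesis is needed for $\tilde Y$ as well, which you do use), and the $\tilde\nabla\circ$ group cancels by combining the covariant derivative of associativity with the exchange of differentiation and argument slots --- the one step that genuinely needs Hertling's equivalence between closed coidentity and total symmetry of the tensor (\ref{sym}), i.e.\ the almost Riemannian hypothesis. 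For the curvature identity, differentiating the symmetric bilinear map $B(Z,V)=(\tilde\nabla_Z\tilde X)\circ V$, applying the Ricci identity, and cyclically summing does produce the left-hand side of (\ref{riem}) applied to $\tilde X$, while the six $\tilde\nabla\circ$ terms cancel in pairs under the same total symmetry; the curvature sign convention is immaterial since the conclusion is that a cyclic sum vanishes. One small presentational caveat: since (\ref{flowcondition}) and the total symmetry are pointwise tensorial identities, you should say explicitly that they may be evaluated on vectors along the map $U$ (e.g.\ on $U_x$ and $\tilde X\circ U_x$), which is what your substitutions require. What your route buys, compared with the paper's citation, is a self-contained proof that isolates precisely where the closed-coidentity hypothesis is used; what it does not do is connect to the Hamiltonian/Tsarev framework of \cite{ita}, which is the context the paper is invoking.
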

By twisting solutions ${\tilde X}$ of (\ref{flowcondition}) by an eventual identity one may derive the dual, or twisted, version of the
above theorem.

\begin{lem} Let $(M, \circ , e, \tilde{g})$
be an almost Riemannian $F$-manifold and
${\tilde X}\in {\mathcal X}(M)$ a vector field such that
\begin{equation}\label{lorentz}
\tilde{\nabla}_{Y}{\tilde X} \circ V = \tilde{\nabla}_{V}{\tilde X}\circ Y,\quad\forall
Y, V\in {\mathcal X}(M).
\end{equation}
Let $\mathcal E$ be an eventual identity on $(M, \circ , e)$ and
$(M, *, {\mathcal E}, g)$ the dual almost Riemannian $F$-manifold,
like in Theorem \ref{r}. Then $X={\tilde X}\circ \mathcal E$
satisfies the dual equation
\begin{equation}\label{lorentz-dual}
({\nabla}_{Y}X) * V = ({\nabla}_{V}X) * Y,\quad\forall Y, V\in
{\mathcal X}(M).
\end{equation}
\end{lem}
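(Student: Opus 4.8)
The plan is to relate the dual covariant derivative $\nabla$ of $g$ to the original covariant derivative $\tilde\nabla$ of $\tilde g$ using the key formula (\ref{new}) established in the proof of Theorem \ref{compth}. That formula expresses the difference $\nabla_Y\alpha - \tilde\nabla_Y\alpha$ on $T^*M$ explicitly in terms of $\mathcal E$ and $\tilde g$. First I would translate (\ref{new}) back to $TM$ using the identification via $\tilde g$, writing $\nabla_Y W = \tilde\nabla_Y W + \mathcal{A}(Y,W)$ for a suitable bilinear tensor $\mathcal{A}$ built from $\mathcal E$. The crucial structural feature I expect to exploit is that $\mathcal{A}(Y,W)$ contains a factor of the form $Y^{\flat}\circ(\cdots)$, i.e. it is of the shape $Y \circ S(W)$ for some endomorphism $S$ depending on $\mathcal E$ and $\tilde g$.

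\textbf{Reducing to the symmetry condition.} Once $\nabla_Y W = \tilde\nabla_Y W + Y\circ S(W)$ (schematically) is in hand, I would compute $(\nabla_Y X)*V$ where $X = \tilde X \circ \mathcal E$ and $*$ is the dual multiplication $A*B = A\circ B\circ\mathcal E^{-1}$. Expanding $\nabla_Y X = \tilde\nabla_Y(\tilde X\circ\mathcal E) + Y\circ S(\tilde X\circ\mathcal E)$ and using the Leibniz rule for $\tilde\nabla$ applied to the $\circ$-product, the term $\tilde\nabla_Y(\tilde X\circ\mathcal E)$ splits as $(\tilde\nabla_Y\tilde X)\circ\mathcal E + \tilde X\circ(\tilde\nabla_Y\mathcal E) + (\tilde\nabla_Y\circ)(\tilde X,\mathcal E)$. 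The point is that after multiplying by $V$ under $*$ (which inserts $\circ\,\mathcal E^{-1}$), the factor $\mathcal E$ cancels and the hypothesis (\ref{lorentz}) makes the leading piece $(\tilde\nabla_Y\tilde X)\circ V$ symmetric in $Y,V$. The remaining pieces must be shown to be symmetric in $Y\leftrightarrow V$ as well.

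\textbf{The main obstacle.} The hard part will be verifying that the correction terms — those involving $Y\circ S(\cdots)$, the covariant derivative $\tilde\nabla\mathcal E$, and the tensor $(\tilde\nabla_Y\circ)(\tilde X,\mathcal E)$ — are symmetric in the exchange $Y\leftrightarrow V$ after contraction with $\circ$. Here I would use the almost Riemannian hypothesis: by Hertling's result quoted before Theorem \ref{compth}, the tensor $\tilde\nabla\circ$ is symmetric in its first two arguments, so that $(\tilde\nabla_Y\circ)(\tilde X,\mathcal E)\circ V$ is handled symmetrically once all factors are commuted using commutativity and associativity of $\circ$. The terms of shape $Y\circ S(\tilde X\circ\mathcal E)\circ\mathcal E^{-1}\circ V$ are manifestly symmetric in $Y,V$ because both $Y$ and $V$ enter only through the commutative product $Y\circ V$. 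The genuinely delicate term is $\tilde X\circ(\tilde\nabla_Y\mathcal E)\circ\mathcal E^{-1}\circ V$, whose symmetrization in $Y,V$ I expect to follow from combining the symmetry of $\tilde\nabla\circ$ with the characterization (\ref{char}) of the eventual identity.

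\textbf{Conclusion.} Assembling these observations, every term in $(\nabla_Y X)*V - (\nabla_V X)*Y$ cancels, yielding (\ref{lorentz-dual}). I would organize the proof so that the substitution of (\ref{new}), the Leibniz expansion, and the appeal to the symmetry of $\tilde\nabla\circ$ each appear as a clearly separated step, with the final antisymmetrization in $Y,V$ carried out only at the end.
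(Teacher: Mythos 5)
There is a genuine gap, and it sits exactly where you placed the ``main obstacle''. Your reduction rests on the claim that (\ref{new}), transported to $TM$ via $\tilde{g}$, yields $\nabla_{Y}W=\tilde{\nabla}_{Y}W+Y\circ S(W)$. This is false: (\ref{new}) compares the two connections acting on \emph{1-forms}, and the two connections are metric for \emph{different} metrics ($\nabla g=0$ while $\tilde{\nabla}\tilde{g}=0$), so conjugating both sides by $\tilde{g}$ does not produce $\nabla$ acting on vector fields. What the $\tilde{g}$-identification actually gives is
\begin{equation*}
{\mathcal E}^{-1}\circ\nabla_{Y}({\mathcal E}\circ W)=\tilde{\nabla}_{Y}W+Y\circ\tilde{g}^{*}\left({\mathcal S}(W^{\flat})\right),
\end{equation*}
whereas the true difference tensor is $\nabla_{Y}W-\tilde{\nabla}_{Y}W=-{\mathcal S}^{t}(Y\circ W)$, with ${\mathcal S}^{t}$ the transpose of ${\mathcal S}$; since ${\mathcal S}$ from (\ref{salpha}) contains the term $\alpha\mapsto\tilde{\nabla}_{\tilde{g}^{*}\alpha}{\mathcal E}^{\flat}$, it is not $\circ$-linear, and ${\mathcal S}^{t}(Y\circ W)\neq Y\circ {\mathcal S}^{t}(W)$ in general. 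As a consequence, your Leibniz expansion leaves the terms $\tilde{X}\circ\tilde{\nabla}_{Y}{\mathcal E}$ and $(\tilde{\nabla}_{Y}\circ)(\tilde{X},{\mathcal E})$ (plus the discrepancy just described) to be symmetrized in $Y\leftrightarrow V$, and this you do not prove --- you only ``expect'' it to follow from the symmetry of $\tilde{\nabla}\circ$ and (\ref{char}). Note that ${\mathcal E}$ itself need not satisfy (\ref{lorentz}), so $(\tilde{\nabla}_{Y}{\mathcal E})\circ V$ is not symmetric in $(Y,V)$, and there is no manifest term-by-term cancellation; establishing it essentially amounts to reproving the lemma, so the proposal is incomplete at its decisive step.

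The missing idea --- the one the paper's proof turns on --- is that the twist by ${\mathcal E}$ is already encoded in the pair of metrics: $\tilde{g}(\tilde{X})=g(\tilde{X}\circ{\mathcal E})=g(X)$, so the single 1-form $\alpha:=\tilde{X}^{\flat}$ is simultaneously the $\tilde{g}$-dual of $\tilde{X}$ and the $g$-dual of $X$. Then no Leibniz rule is needed: metric compatibility gives $\nabla_{Y}\alpha=g(\nabla_{Y}X)$ and $\tilde{\nabla}_{Y}\alpha=\tilde{g}(\tilde{\nabla}_{Y}\tilde{X})$, so (\ref{new}) with this $\alpha$ reads $g(\nabla_{Y}X)=\tilde{g}(\tilde{\nabla}_{Y}\tilde{X})+Y^{\flat}\circ{\mathcal S}(\tilde{X}^{\flat})$, i.e. $\nabla_{Y}X={\mathcal E}\circ\tilde{\nabla}_{Y}\tilde{X}+{\mathcal E}\circ Y\circ\tilde{g}^{*}({\mathcal S}(\tilde{X}^{\flat}))$, which is (\ref{useful}). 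Applying $*V$ (which inserts $\circ\,{\mathcal E}^{-1}$) gives $(\nabla_{Y}X)*V=(\tilde{\nabla}_{Y}\tilde{X})\circ V+Y\circ V\circ\tilde{g}^{*}({\mathcal S}(\tilde{X}^{\flat}))$: the first term is symmetric in $(Y,V)$ by (\ref{lorentz}) and the second manifestly so, proving (\ref{lorentz-dual}). In particular no derivative of ${\mathcal E}$ or of $\circ$, nor the total symmetry of $\tilde{\nabla}\circ$, ever enters. Replacing your ``translate to $TM$ and expand by Leibniz'' step by this choice of $\alpha$ repairs the argument.
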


\begin{proof}
 Recall, from relation (\ref{new}), that
\begin{equation}\label{new2}
\nabla_{Y}\alpha = \tilde{\nabla}_{Y}\alpha + {\mathcal S}(\alpha
)\circ Y^{\flat},\quad\forall Y\in TM
\end{equation}
where ${\mathcal S}(\alpha )$ is given by (\ref{salpha}), as usual
$Y^{\flat}=\tilde{g}(Y)$ and $\circ$ is the induced multiplication
on $T^{*}M$, obtained by identifying $TM$ with $T^{*}M$ using
$\tilde{g}.$ In (\ref{new2}) let $\alpha := {\tilde X}^{\flat}=
g({\tilde X}\circ {\mathcal E}).$ Relation (\ref{new2}) becomes
\begin{equation}\label{new1}
g(\nabla_{Y}({\tilde X}\circ {\mathcal E})) = \tilde{\nabla}_{Y}
{\tilde X}^{\flat} + Y^{\flat}\circ {\mathcal S}({\tilde
X}^{\flat})
\end{equation}
Applying $\tilde{g}^{*}$ to (\ref{new1}) and using
$(\tilde{g}^{*}g)(X) = {\mathcal E}^{-1}\circ X$ we get
\begin{equation}\label{useful}
\nabla_{Y}({\tilde X}\circ {\mathcal E}) ={\mathcal E}\circ
\tilde{\nabla}_{Y}{\tilde X} +{\mathcal E}\circ Y\circ
\tilde{g}^{*}( {\mathcal S}({\tilde X}^{\flat})) .
\end{equation}
From (\ref{useful}) we get
$$
\nabla_{Y}({\tilde X}\circ{\mathcal E}) *V =
\tilde{\nabla}_{Y}{\tilde X}\circ V +Y\circ V \circ
\tilde{g}^{*}({\mathcal S}({\tilde X}^{\flat})),
$$
which, from (\ref{lorentz}), is symmetric in $Y$ and $V$.
Relation (\ref{lorentz-dual}) is satisfied.
\end{proof}

Thus we obtain dual flow equations
\begin{eqnarray*}
U_t  & = & X * U_x \,,\\
U_\tau & = & Y * U_x
\end{eqnarray*}
from vector fields ${\tilde X}\,,{\tilde Y} \in{\mathcal X}(M)$ satisfying (\ref{flowcondition}) by twisting
by an eventual identity. Moreover by Theorem \ref{r} the dual curvature condition also holds.

This duality, or twisting, by an eventual identity gives a geometric form of certain well-known arguments
from the theory of integrable systems of hydrodynamic type which originate in the work of Tsarev.
Recall that in the semi-simple case the basic equation $U_t = {\tilde X} \circ U_x$ reduces to diagonal form
\[
u^i_t = {\tilde X}^i({\bf u}) u^i_x
\]
so the components of ${\tilde X}$ become the characteristic velocities of the quasilinear system. Equation
(\ref{flowcondition}) reduces to Tsarev's equation
\begin{equation}
\frac{\partial~}{\partial u^i} \log\sqrt{{\tilde g}_{jj}} = \frac{\partial_i {\tilde X}^j}{{\tilde X}^i-{\tilde X}^j}\,, \qquad i\neq j\,.
\label{Tsarev}
\end{equation}
The integrability conditions for this system form the so-called semi-Hamiltonian conditions, which in turn are the coordinate
form of (\ref{riem}).

Solutions of (\ref{Tsarev}) possess a functional freedom: if
${\tilde{g}}_{ii}({\bf u})$ is a solution so is
${\tilde{g}}_{ii}({\bf u})/f_i(u^i)\,.$ This functional freedom
can now be reinterpreted, via Remark \ref{remarks} {\it iii)} on
the form of eventual identities in the semi-simple case, as the
dual version of the theory. Also since the $f_i$ are arbitrary,
one may replace it by $f_i \rightarrow f_i + \lambda$ for any
constant $\lambda\,.$ Thus one recovers the pencil property
$g^*_\lambda = g^* + \lambda {\tilde g}^*$ and hence, by
Proposition \ref{p}, a compatible pair of metrics and (non-local)
bi-Hamiltonian structures (this last stage, from almost compatible
to compatible being automatic in the semi-simple case).

In applications, where one is interested in finding bi-Hamiltonian structures for a specific system of equations, one tries to
find a suitable eventual identity so that the metric $g$ has simple curvature properties, such as flatness or constant curvature.
If flat one arrives, via the original Dubrovin-Novikov theorem, at a local Hamiltonian structure. The simplest case is where both metrics
are flat, and hence form a flat pencil and a local bi-Hamiltonian structure. With extra conditions one can arrive at a Frobenius manifold
\cite{dubrovin2}.

\section{Duality and $tt^{*}$-geometry}\label{ttstar-sect}

An holomorphic $F$-manifold is a complex manifold $M$ together with an
associative, commutative, with unity multiplication
$\circ$ on the sheaf of holomorphic vector fields, satisfying
the $F$-manifold condition (\ref{fman}). Euler vector fields, identities,
eventual identities etc are holomorphic and are defined
like in the smooth case.
In particular,
our characterization of eventual identities developed in Theorem \ref{main}
holds also in the holomorphic setting.

In the same framework like in Sections \ref{F-riem} and
\ref{dual}, we add structures - hermitian metrics and real
structures - on an holomorphic $F$-manifold $(M, \circ ,e)$  and
we study their behaviour under twisting with an eventual identity.
We assume that these structures are compatible with the
multiplication $\circ$, i.e. in the terminology of \cite{simpson}
they form harmonic Higgs bundles or $DChk$-structures and we
determine necessary and sufficient conditions on the eventual
identity such that the resulting dual structures are compatible in
the same way. Harmonic Higgs bundles and $DChk$-structures are
part of the so called CV-structures, introduced for the first time
by Cecotti and Vafa in \cite{cecotti} and further studied in the
literature, see \cite{hert1}, \cite{sabbah}.

First we fix our conventions in the holomorphic setting.

\begin{notations}{\rm In this Section $M$ will denote a
complex manifold, considered as a smooth manifold together with an
integrable complex structure $J$. Its real tangent bundle will be
denoted $TM.$ The sheaf of smooth real vector fields on $(M, J)$
will be denoted as always by ${\mathcal X}(M)$, the sheaf of
vector fields of type $(1,0)$ by ${\mathcal T}^{1,0}_{M}$ and the
sheaf of holomorphic vector fields by ${\mathcal T}_{M}.$ A
multiplication on the holomorphic tangent bundle $T^{1,0}M$ will
be trivially extended to the complexified bundle $T_{\mathbb{C}}M
= TM\otimes\mathbb{C}.$}
\end{notations}

Following \cite{dubrovin}, \cite{simpson}, \cite{hert} we give
the following definition, which recalls basic notions from the theory
of $tt^{*}$-geometry.

\begin{defn} i) A pair $(\tilde{g}, \tilde{h})$ formed  by a
complex bilinear, non-degenerate symmetric form $\tilde{g}$ and a
hermitian metric $\tilde{h}$ on $T^{1,0}M$ is called compatible if
the Chern connection $\tilde{D}$ of the holomorphic hermitian
vector bundle $(T^{1,0}M, \tilde{h})$ preserves $\tilde{g}$, i.e.
$\tilde{D}\tilde{g}=0.$\\

ii) Let $\tilde{h}$ be a hermitian metric and $\circ$ a
commutative, associative, multiplication with unity field $e$, on
$T^{1,0}M.$ Define a Higgs field $\tilde{C}\in \Omega^{1,0}(M,
\mathrm{End}(T^{1,0}M))$ by
$$
\tilde{C}_{X}Y:= X\circ Y.
$$
The hermitian metric $\tilde{h}$ on the Higgs bundle $(T^{1,0}M,
\tilde{C})$ is called harmonic (and $(T^{1,0}M, \tilde{C},
\tilde{h})$ is a harmonic Higgs bundle) if $\tilde{C}_{X}Y\in
{\mathcal T}_{M}$, for any $X, Y\in {\mathcal T}_{M}$ and the
$tt^{*}$-equations
\begin{equation}
(\partial^{\tilde{D}}\tilde{C})_{X, Y}:=
\tilde{D}_{X}( \tilde{C}_{Y}) -
\tilde{D}_{Y}( \tilde{C}_{X})
-\tilde{C}_{[X, Y]}=0
\end{equation}
and
\begin{equation}
R^{\tilde{D}}_{X, \bar{Y}}
+ [\tilde{C}_{X}, \tilde{C}^{\flat}_{\bar{Y}}]=0
\end{equation}
are satisfied, for any $X, Y\in{\mathcal T}^{1,0}_{M}.$ Above
$R^{\tilde{D}}$ denotes the curvature of the Chern connection
$\tilde{D}$ of $(T^{1,0}M, \tilde{h})$ and $\tilde{C}^{\flat}$ is
the adjoint of $\tilde{C}$ with respect to $\tilde{h}$, i.e.
$$
\tilde{h}(\tilde{C}_{X}Y, Z)= \tilde{h}(Y, \tilde{C}^{\flat}_{\bar{X}}Z),
\quad\forall Y, Z\in T^{1,0}M,\quad\forall X\in T_{\mathbb{C}}M.
$$

iii) Let $(T^{1,0}M, \tilde{C}, \tilde{h})$ be a harmonic
Higgs bundle  and $\tilde{k}$ a real
structure on $T^{1,0}M$ such that the complex bilinear form
$$
\tilde{g}(X, Y):= \tilde{h}(X, \tilde{k}Y)
$$
on $T^{1,0}M$ is symmetric and (multiplication) invariant. The
data $(T^{1,0}M, \tilde{C}, \tilde{h},\tilde{k})$ is called a
$D\tilde{C}\tilde{h}\tilde{k}$-structure if the pair $(\tilde{g},
\tilde{h})$ is compatible.
\end{defn}

We remark that a harmonic Higgs bundle
$(T^{1,0}M, \tilde{C}, \tilde{h})$ has an associated pencil of flat
connections
\begin{equation}
\tilde{D}^{z}:= \tilde{D} +\frac{1}{z}\tilde{C} +z\tilde{C}^{\flat}.
\end{equation}
The flatness property of this pencil encodes the entire geometry of the
harmonic Higgs bundle \cite{hert}.\\

For the remaining part of this Section we fix an
$F$-manifold $(M, \circ , e)$ together with an eventual identity
$\mathcal E$, hermitian metric $\tilde{h}$, and real
structure $\tilde{k}$ on $T^{1,0}M$ such that the complex bilinear
form
$$
\tilde{g}(X, Y) := \tilde{h}(X, \tilde{k}Y)
$$
on $T^{1,0}M$ is symmetric and invariant. Let
\begin{equation}\label{dual-multi}
X*Y:= X\circ Y\circ {\mathcal E}^{-1}
\end{equation}
be the dual multiplication, with associated Higgs field
$$
C_{X}Y:= X\circ Y\circ {\mathcal E}^{-1}.
$$
Assume that the inverse $\mathcal E^{-1}$ has a square root
${\mathcal E}^{-1/2}$
and define a new hermitian metric
\begin{equation}\label{h-def}
{h}(X, Y):= \tilde{h}({\mathcal E}^{-1/2}\circ X, {\mathcal E}^{-1/2}\circ Y)
\end{equation}
and a new real structure
$$
k(X) := {\mathcal E}^{1/2}\circ \tilde{k}({\mathcal E}^{-1/2}\circ X)
$$
on $T^{1,0}M.$
It is straightforward to check that
\begin{equation}\label{g}
g(X, Y):= h(X, kY)= \tilde{g}( {\mathcal E}^{-1/2}\circ X,
{\mathcal E}^{-1/2}\circ Y).
\end{equation}
In particular, $g$ is symmetric, complex bilinear and invariant.

While in the smooth case it was not immediately clear that
compatibility is preserved under twisting with eventual
identities, the analogous statement in the holomorphic setting
comes for free (and in fact holds under the weaker assumption that
$\mathcal E$ is holomorphic and invertible, not necessarily an
eventual identity).

\begin{lem} If the pair $(\tilde{g}, \tilde{h})$ is compatible, then also
the pair $(g, h)$ is compatible. \end{lem}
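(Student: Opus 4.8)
The plan is to recognize that every new structure---the multiplication $*$, the metric $h$, the real structure $k$, and the bilinear form $g$---is obtained from its tilded counterpart by conjugating with the bundle endomorphism $\phi\in\mathrm{End}(T^{1,0}M)$ defined by $\phi(X):={\mathcal E}^{-1/2}\circ X$. Since ${\mathcal E}$ is holomorphic and invertible and the multiplication $\circ$ is holomorphic, $\phi$ is a holomorphic automorphism of the holomorphic vector bundle $T^{1,0}M$, with inverse $X\mapsto {\mathcal E}^{1/2}\circ X$. Directly from (\ref{h-def}) and (\ref{g}) one has $h(X,Y)=\tilde{h}(\phi X,\phi Y)$ and $g(X,Y)=\tilde{g}(\phi X,\phi Y)$; that is, $h$ and $g$ are the pullbacks of $\tilde{h}$ and $\tilde{g}$ along $\phi$. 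I stress that this description never uses that ${\mathcal E}$ is an eventual identity, only that it is holomorphic and invertible, which is exactly the weakening noted in the sentence preceding the Lemma.

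The key step is the naturality of the Chern connection under the holomorphic isometry $\phi\colon (T^{1,0}M,h)\to (T^{1,0}M,\tilde{h})$. Concretely, I would show that the Chern connection $D$ of $(T^{1,0}M,h)$ and the Chern connection $\tilde{D}$ of $(T^{1,0}M,\tilde{h})$ are intertwined by $\phi$, i.e. $\tilde{D}_{X}(\phi s)=\phi(D_{X}s)$ for every $X$ and every local section $s$. The cleanest way is to invoke the defining characterization of the Chern connection: set $D'_{X}s:=\phi^{-1}\bigl(\tilde{D}_{X}(\phi s)\bigr)$ and verify the two properties that pin down the Chern connection of $(T^{1,0}M,h)$. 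First, $D'$ is compatible with $h$: differentiating $h(s,t)=\tilde{h}(\phi s,\phi t)$ and using $\tilde{D}\tilde{h}=0$ together with $\tilde{D}_{X}(\phi s)=\phi(D'_{X}s)$ gives $Xh(s,t)=h(D'_{X}s,t)+h(s,D'_{X}t)$. Second, the $(0,1)$-part of $D'$ equals $\bar{\partial}$: since $\phi$ is holomorphic, $\bar{\partial}\phi=0$ so $\phi$ commutes with $\bar{\partial}$, whence $(D')^{0,1}_{X}s=\phi^{-1}\bar{\partial}_{X}(\phi s)=\bar{\partial}_{X}s$. By uniqueness of the Chern connection, $D'=D$, which is the asserted intertwining.

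With the intertwining in hand, the conclusion is immediate: differentiating $g(s,t)=\tilde{g}(\phi s,\phi t)$ along $X$, applying the hypothesis $\tilde{D}\tilde{g}=0$, and then replacing each $\tilde{D}_{X}(\phi\,\cdot)$ by $\phi(D_{X}\,\cdot)$ yields $Xg(s,t)=g(D_{X}s,t)+g(s,D_{X}t)$, which is precisely $Dg=0$, i.e. the pair $(g,h)$ is compatible. I expect the main---indeed the only---obstacle to be the verification of the intertwining in the second paragraph; once the functoriality of the Chern connection under holomorphic isometries is established, preservation of $g$ transports from preservation of $\tilde{g}$ with no further work, which is exactly why the holomorphic statement \emph{comes for free}, in contrast to the delicate curvature argument required in the smooth case of Theorem \ref{compth}.
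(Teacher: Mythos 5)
Your proof is correct and takes essentially the same approach as the paper: the paper's entire proof consists of the intertwining relation (\ref{pull-chern}), $D_{X}Z={\mathcal E}^{1/2}\circ \tilde{D}_{X}({\mathcal E}^{-1/2}\circ Z)$, which is exactly your conjugation identity $\tilde{D}_{X}(\phi s)=\phi(D_{X}s)$, followed by the same transport of $\tilde{D}\tilde{g}=0$ to $Dg=0$ via $g=\phi^{*}\tilde{g}$. The only difference is that you justify the intertwining explicitly through the uniqueness characterization of the Chern connection (metric compatibility plus $(0,1)$-part equal to $\bar{\partial}$, using holomorphy of ${\mathcal E}^{-1/2}\circ$), a verification the paper leaves implicit.
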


\begin{proof} From (\ref{h-def}), the  Chern connections
$D$ and $\tilde{D}$ of $(T^{1,0}M, h)$
and $(T^{1,0}M, \tilde{h})$ respectively are related by
\begin{equation}\label{pull-chern}
D_{X}Z:= {\mathcal E}^{1/2}\circ \tilde{D}_{X}( {\mathcal E}^{-1/2}
\circ Z),\quad\forall X\in {\mathcal X}(M),\quad Z\in
{\mathcal T}^{1,0}_{M}.
\end{equation}
From (\ref{g}) and (\ref{pull-chern}), $\tilde{D}\tilde{g}=0$ if
and only if $Dg=0$.
\end{proof}

Note that if $M$ is a Frobenius manifold with Euler vector field $E$ then the choice
${\mathcal E}=E$ results in a compatible pair $(g, h)$ with certain special
properties. The metric $g$ is the intersection form of the manifold, and hence is flat.
Thus there exists a distinguished coordinate system of so-called flat coordinates in which
the components of $g$ are constant. The metric $h$ is then a natural hermitian metric
defined on the complement of the classical discriminant $\Sigma$ of the manifold.

\begin{thm}\label{ttstar} i) Assume that
$\partial^{\tilde{D}}\tilde{C}=0.$
Then $\partial^{D}C =0$ if and only if for any
$X,Y, Z\in {\mathcal T}^{1,0}_{M}$,
\begin{equation}\label{herm1}
\tilde{D}_{X} ({\mathcal E}\circ Y\circ Z) -
\tilde{D}_{Y}({\mathcal E}\circ X\circ Z) =
{\mathcal E}\circ \left( \tilde{D}_{X}(Y\circ Z) -
\tilde{D}_{Y}(X\circ Z)\right)
\end{equation}

ii) Assume that for any $X, Y\in {\mathcal T}^{1,0}_{M}$,
\begin{equation}
R^{\tilde{D}}_{X, \bar{Y}}
+ [\tilde{C}_{X}, \tilde{C}^{\flat}_{\bar{Y}}]=0.
\end{equation}
Then the same relation holds with $\tilde{D}$ replaced by $D$,
$\tilde{C}$ replaced by $C$ and $\tilde{C}^{\flat}$ replaced by
the adjoint $C^{\flat}$ of $C$ with respect to $h$ if and only if,
for any $X,Y\in T^{1,0}M$,
\begin{equation}\label{herm2}
[\tilde{C}_{X}, \tilde{k}\tilde{C}_{Y}\tilde{k}] =
[\tilde{C}_{{\mathcal E}^{-1}\circ X}, \tilde{k}
\tilde{C}_{{\mathcal E}^{-1}
\circ Y}\tilde{k}].
\end{equation}

iii) If $(T^{1,0}M, \tilde{C},\tilde{h})$ is a harmonic
Higgs bundle (respectively, $(T^{1,0}M, \tilde{C},\tilde{h}, \tilde{k})$ is a
$\tilde{D}\tilde{C}\tilde{h}\tilde{k}$-structure) then
$(T^{1,0}M, {C},{h})$ is a harmonic
Higgs bundle (respectively,  $(T^{1,0}M,{C},{h},{k})$ is a
${D}{C}{h}{k}$-structure) if and only if both (\ref{herm1}) and
(\ref{herm2}) are satisfied.

\end{thm}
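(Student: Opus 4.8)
The plan is to realize both the dual Chern connection $D$ and the dual metric $h$ as gauge transforms of $\tilde{D}$ and $\tilde{h}$ by the holomorphic bundle automorphism $g:=M_{\mathcal{E}^{-1/2}}$, where $M_F$ denotes multiplication by the vector field $F$, and then to transport every $tt^{*}$-object to the tilded side by conjugating with $g$. From (\ref{pull-chern}) one reads off $D=g^{-1}\tilde{D}g$, so the curvatures obey $R^{D}_{X,\bar{Y}}=g^{-1}R^{\tilde{D}}_{X,\bar{Y}}g$; since multiplication operators commute, the dual Higgs field is simply $C_{X}=M_{\mathcal{E}^{-1}}\tilde{C}_{X}=\tilde{C}_{\mathcal{E}^{-1}\circ X}$ and is fixed under conjugation, $gC_{X}g^{-1}=C_{X}$. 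Finally, because $\tilde{g}$ is invariant each $\tilde{C}_{X}$ is $\tilde{g}$-symmetric, and combining this with $\tilde{g}(\cdot,\cdot)=\tilde{h}(\cdot,\tilde{k}\cdot)$ and $\tilde{k}^{2}=\mathrm{id}$ yields the adjoint formula $\tilde{C}^{\flat}_{\bar{X}}=\tilde{k}\tilde{C}_{X}\tilde{k}$; these three facts are what convert the abstract $tt^{*}$-equations into the concrete conditions (\ref{herm1}) and (\ref{herm2}).

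For part i) I would substitute $C_{Y}Z=\mathcal{E}^{-1}\circ Y\circ Z$ and $D_{X}Z=\mathcal{E}^{1/2}\circ\tilde{D}_{X}(\mathcal{E}^{-1/2}\circ Z)$ directly into $(\partial^{D}C)_{X,Y}(Z)=D_{X}(C_{Y}Z)-C_{Y}(D_{X}Z)-D_{Y}(C_{X}Z)+C_{X}(D_{Y}Z)-C_{[X,Y]}Z$ and multiply the outcome by $\mathcal{E}^{-1/2}$. Feeding in the hypothesis $\partial^{\tilde{D}}\tilde{C}=0$ in the form of the identity it produces on the vector $\mathcal{E}^{-1/2}\circ Z$ (multiplied through by $\mathcal{E}^{-1}$) cancels every genuinely connection-dependent term, leaving $\mathcal{E}^{-1/2}\circ(\partial^{D}C)_{X,Y}(Z)$ equal to the antisymmetrization over $X,Y$ of the derivation defect $\tilde{D}_{X}(\mathcal{E}^{-1}\circ W)-\mathcal{E}^{-1}\circ\tilde{D}_{X}W$ at $W=\mathcal{E}^{-1/2}\circ Y\circ Z$. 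The substitution $Z\mapsto\mathcal{E}^{1/2}\circ Z$ clears the stray half-power, and differentiating $M_{\mathcal{E}}M_{\mathcal{E}^{-1}}=\mathrm{id}$ exhibits the $\mathcal{E}^{-1}$-defect as $-M_{\mathcal{E}^{-1}}$ times the $\mathcal{E}$-defect conjugated by $M_{\mathcal{E}^{-1}}$; a last substitution $Z\mapsto\mathcal{E}\circ Z$ then turns the symmetry of the $\mathcal{E}^{-1}$-defect into that of the $\mathcal{E}$-defect, which is precisely (\ref{herm1}). Hence $\partial^{D}C=0$ if and only if (\ref{herm1}) holds.

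For part ii) I would conjugate the target equation $R^{D}_{X,\bar{Y}}+[C_{X},C^{\flat}_{\bar{Y}}]=0$ by $g$. Using $gR^{D}g^{-1}=R^{\tilde{D}}$, $gC_{X}g^{-1}=C_{X}=\tilde{C}_{\mathcal{E}^{-1}\circ X}$, and the transformation rule for adjoints under an isometry, $gC^{\flat}_{\bar{Y}}g^{-1}=(gC_{Y}g^{-1})^{\flat_{\tilde{h}}}=\tilde{C}^{\flat}_{\overline{\mathcal{E}^{-1}\circ Y}}$, the equation becomes $R^{\tilde{D}}_{X,\bar{Y}}+[\tilde{C}_{\mathcal{E}^{-1}\circ X},\tilde{C}^{\flat}_{\overline{\mathcal{E}^{-1}\circ Y}}]=0$. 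Subtracting the standing hypothesis $R^{\tilde{D}}_{X,\bar{Y}}+[\tilde{C}_{X},\tilde{C}^{\flat}_{\bar{Y}}]=0$ leaves $[\tilde{C}_{\mathcal{E}^{-1}\circ X},\tilde{C}^{\flat}_{\overline{\mathcal{E}^{-1}\circ Y}}]=[\tilde{C}_{X},\tilde{C}^{\flat}_{\bar{Y}}]$, and replacing each $\tilde{C}^{\flat}_{\bar{W}}$ by $\tilde{k}\tilde{C}_{W}\tilde{k}$ via the adjoint formula recorded above turns this identity into exactly (\ref{herm2}).

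Part iii) is then formal. A harmonic Higgs bundle is precisely a Higgs field with $C_{X}Y\in\mathcal{T}_{M}$ satisfying the two $tt^{*}$-equations, and $C_{X}Y=X\circ Y\circ\mathcal{E}^{-1}$ is holomorphic because $\mathcal{E}$ is an (invertible holomorphic) eventual identity; by i) the first equation is preserved if and only if (\ref{herm1}), and by ii) the second if and only if (\ref{herm2}). For the $D\tilde{C}\tilde{h}\tilde{k}$-case one adds the compatibility of $(g,h)$, which holds unconditionally by the preceding Lemma, together with the symmetry and invariance of $g$ already recorded in (\ref{g}) and the real structure $k$; hence the structure is preserved if and only if both (\ref{herm1}) and (\ref{herm2}) hold. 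I expect the main obstacle to be the computation in i): keeping the many half-integer powers of $\mathcal{E}$ straight through the expansion and recognizing that the successive reparametrizations collapse the $\mathcal{E}^{-1}$-defect generated by the calculation onto the $\mathcal{E}$-defect appearing in (\ref{herm1}).
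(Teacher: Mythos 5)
Your proposal is correct and follows essentially the same route as the paper: both exploit the gauge-transform relation $D=M_{\mathcal{E}^{1/2}}\tilde{D}M_{\mathcal{E}^{-1/2}}$ from (\ref{pull-chern}), the mutual commutativity of multiplication operators, and the adjoint formula $\tilde{C}^{\flat}_{\bar{X}}=\tilde{k}\tilde{C}_{X}\tilde{k}$, reducing part i) to the symmetry of the covariant-derivative defect of $M_{\mathcal{E}^{\pm1}}$ and part ii) to conjugation of the curvature identity by $M_{\mathcal{E}^{-1/2}}$. The only difference is organizational: where the paper expands $\partial^{D}C$ in terms of $T=M_{\mathcal{E}^{-1/2}}$ and invokes the auxiliary identity (\ref{dc}), you subtract the tilded $tt^{*}$-equation evaluated on $\mathcal{E}^{-1/2}\circ Z$ directly, which is a slightly cleaner bookkeeping of the same computation.
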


\begin{proof}
We only need to check
that (\ref{herm1}) and (\ref{herm2}) are equivalent to the
$tt^{*}$-equations for $(D, C, C^{\flat})$, the other statements being
trivial from our previous considerations.

From a straightforward computation which uses
$\partial^{\tilde{D}}\tilde{C}=0$, for any $X, Y\in {\mathcal
T}^{1,0}_{M}$,
\begin{align*}
(\partial^{D}C)_{X, Y}&= \tilde{D}_{X} (\tilde{C}_{\mathcal E^{-1}})\tilde{C}_{Y}
+\tilde{C}_{\mathcal E^{1/2}}\tilde{D}_{X}(\tilde{C}_{\mathcal E^{-1/2}})
\tilde{C}_{\mathcal E^{-1}\circ Y} + {\tilde{C}}_{\mathcal E^{-1/2}\circ X}
\tilde{D}_{Y}(\tilde{C}_{\mathcal E^{-1/2}})\\
&- \tilde{D}_{Y} (\tilde{C}_{\mathcal E^{-1}})\tilde{C}_{X}
-\tilde{C}_{\mathcal E^{1/2}}\tilde{D}_{Y}(\tilde{C}_{\mathcal E^{-1/2}})
\tilde{C}_{\mathcal E^{-1}\circ X}
- {\tilde{C}}_{\mathcal E^{-1/2}\circ Y}
\tilde{D}_{X}(\tilde{C}_{\mathcal E^{-1/2}})
\\
\end{align*}
To simplify notations, define $T\in \mathrm{End}_{\mathbb{C}}(T^{1,0}M)$
by $T(X):= {\mathcal E}^{-1/2}\circ X$.
Therefore, $\partial^{D}C=0$ is equivalent with

\begin{align*}
\left(T(\tilde{D}_{X}T) + (\tilde{D}_{X}T)T+
T^{-1}(\tilde{D}_{X}T)T^{2}\right)\tilde{C}_{Y}
+ T\tilde{C}_{X}\tilde{D}_{Y}T\\
- \left(T(\tilde{D}_{Y}T) + (\tilde{D}_{Y}T)T +
T^{-1}(\tilde{D}_{Y}T)T^{2}\right)
\tilde{C}_{X}
 - T\tilde{C}_{Y}\tilde{D}_{X}T=0.
\end{align*}
On the other hand, applying the covariant derivative
$\tilde{D}_{Y}$ (for $Y\in {\mathcal T}^{1,0}_{M}$) to the
relation
\begin{equation}\label{comutativa}
T\tilde{C}_{X} = \tilde{C}_{X}T,\quad\forall X\in {\mathcal T}^{1,0}_{M},
\end{equation}
skew-symmetrizing in $X$ and $Y$ and using
$\partial^{\tilde{D}}\tilde{C}=0$,
we obtain
\begin{equation}\label{dc}
(\tilde{D}_{X}T)\tilde{C}_{Y} - (\tilde{D}_{Y}T)\tilde{C}_{X} =
\tilde{C}_{Y}(\tilde{D}_{X}T) -\tilde{C}_{X}(\tilde{D}_{Y}T).
\end{equation}
Using (\ref{dc}), the condition
$\partial^{D}C =0$ becomes equivalent to
$$
\tilde{D}_{X} (T^{2})\tilde{C}_{Y} =
\tilde{D}_{Y}(T^{2})\tilde{C}_{X},
$$
which, in turn, is equivalent to
(\ref{herm1}) (easy check). This proves claim {\it i)}.

For claim {\it ii)}, we need to prove that (\ref{herm2}) is
equivalent with the remaining $tt^{*}$-equation
$$
R^{{D}}_{X, \bar{Y}} + [{C}_{X}, {C}^{\flat}_{\bar{Y}}]=0,
\quad\forall X,Y\in T^{1,0}M.
$$
This follows from a straightforward computation which uses
$$
R^{D}_{X, Y}= \tilde{C}_{{\mathcal E}^{1/2}}R^{\tilde{D}}_{X, Y}
\tilde{C}_{\mathcal E^{-1/2}},\quad\forall X, Y\in TM
$$
together with
$$
\tilde{C}^{\flat}_{X} = \tilde{k}\tilde{C}_{\bar{X}}\tilde{k}
$$
and
$$
C^{\flat}_{X} = kC_{\bar{X}}k = \tilde{C}_{\mathcal
E^{1/2}}\tilde{k} \tilde{C}_{X\circ \mathcal
E^{-1}}\tilde{k}\tilde{C}_{\mathcal E^{-1/2}}
$$
for any $X\in T^{0,1}M.$
\end{proof}

We remark that condition (\ref{herm1}) on
the eventual identity is invariant under our duality of Theorem
\ref{main}. The following simple result holds.

\begin{prop} Let $(M, *, {\mathcal E}, e)$ be the dual of
$(M, \circ , e ,{\mathcal E})$. If the eventual identity
$\mathcal E$  of $(M, \circ , e)$ satisfies
\begin{equation}
\tilde{D}_{X} ({\mathcal E}\circ Y\circ Z) -
\tilde{D}_{Y}({\mathcal E}\circ X\circ Z) =
{\mathcal E}\circ \left( \tilde{D}_{X}(Y\circ Z) -
\tilde{D}_{Y}(X\circ Z)\right) ,
\end{equation}
then the eventual identity $e$ of $(M, *, {\mathcal E})$ satisfies
the dual condition
\begin{equation}
{D}_{X} (e* Y* Z) -
{D}_{Y}({e}* X* Z) =
e* \left( {D}_{X}(Y* Z) -
{D}_{Y}(X* Z)\right) ,
\end{equation}
for any $X, Y, Z\in {\mathcal T}^{1,0}_{M}.$
\end{prop}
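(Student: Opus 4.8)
The plan is to reduce the dual condition directly to the hypothesis (\ref{herm1}) by substituting the two duality formulae already at our disposal: the definition (\ref{dual-multi}) of the dual product $*$ and the relation (\ref{pull-chern}) between the Chern connections $D$ and $\tilde{D}$. First I would rewrite every $*$-expression appearing in the dual condition in terms of $\circ$ and $\mathcal{E}$. Since $e$ is the $\circ$-identity and $X*Y = X\circ Y\circ {\mathcal E}^{-1}$, one gets $e*Y*Z = Y\circ Z\circ {\mathcal E}^{-2}$, $Y*Z = Y\circ Z\circ {\mathcal E}^{-1}$, and $e*W = W\circ {\mathcal E}^{-1}$ for any vector field $W$. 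Thus the left- and right-hand sides of the dual identity become $D_X(Y\circ Z\circ {\mathcal E}^{-2}) - D_Y(X\circ Z\circ {\mathcal E}^{-2})$ and $\bigl(D_X(Y\circ Z\circ {\mathcal E}^{-1}) - D_Y(X\circ Z\circ {\mathcal E}^{-1})\bigr)\circ {\mathcal E}^{-1}$ respectively.

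Next I would eliminate $D$ in favour of $\tilde{D}$ by means of (\ref{pull-chern}), namely $D_X V = {\mathcal E}^{1/2}\circ\tilde{D}_X({\mathcal E}^{-1/2}\circ V)$. Because $\circ$ is commutative and associative and the (half-integer) powers of $\mathcal{E}$ combine freely, each term collapses: the left-hand side becomes ${\mathcal E}^{1/2}\circ\bigl[\tilde{D}_X(Y\circ Z\circ {\mathcal E}^{-5/2}) - \tilde{D}_Y(X\circ Z\circ {\mathcal E}^{-5/2})\bigr]$, while the right-hand side becomes ${\mathcal E}^{-1/2}\circ\bigl[\tilde{D}_X(Y\circ Z\circ {\mathcal E}^{-3/2}) - \tilde{D}_Y(X\circ Z\circ {\mathcal E}^{-3/2})\bigr]$. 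Since $\mathcal E$ is invertible with an invertible square root, $\circ$-multiplying both sides by ${\mathcal E}^{1/2}$ is a reversible operation and shows the dual condition to be equivalent to
\begin{equation*}
{\mathcal E}\circ\left[\tilde{D}_X(Y\circ Z\circ {\mathcal E}^{-5/2}) - \tilde{D}_Y(X\circ Z\circ {\mathcal E}^{-5/2})\right] = \tilde{D}_X(Y\circ Z\circ {\mathcal E}^{-3/2}) - \tilde{D}_Y(X\circ Z\circ {\mathcal E}^{-3/2}).
\end{equation*}

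The observation that finishes the argument is that all the surplus powers of $\mathcal{E}$ can be absorbed into the spectator slot. Setting $Z' := Z\circ {\mathcal E}^{-5/2}$ one has $Y\circ Z\circ {\mathcal E}^{-5/2} = Y\circ Z'$ and $Y\circ Z\circ {\mathcal E}^{-3/2} = {\mathcal E}\circ Y\circ Z'$ (and likewise with $X$), so the displayed identity is, after a harmless interchange of sides, exactly (\ref{herm1}) evaluated at the triple $(X, Y, Z')$. As ${\mathcal E}^{-5/2}$ is invertible, $Z'$ ranges over all holomorphic vector fields as $Z$ does; hence the hypothesis that (\ref{herm1}) holds for every triple of vector fields immediately yields the dual condition (and in fact the two statements are equivalent).

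I do not expect a genuine obstacle here, since the claim is a formal consequence of the two duality formulae (\ref{dual-multi}) and (\ref{pull-chern}). The only point requiring care is the bookkeeping of the half-integer powers of $\mathcal{E}$: one must check that the mismatched powers on the two sides are exactly compensated, and in particular recognise that they are reabsorbed by the single substitution $Z\mapsto Z\circ {\mathcal E}^{-5/2}$, so that the computation returns precisely the original condition (\ref{herm1}) rather than a strictly stronger requirement.
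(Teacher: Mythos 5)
Your proof is correct and takes essentially the same route as the paper's: the paper's entire proof is the ``straightforward computation, which uses (\ref{dual-multi}) and (\ref{pull-chern})'' that you carry out explicitly. Your bookkeeping of the half-integer powers of $\mathcal{E}$ checks out, and the substitution $Z\mapsto Z\circ\mathcal{E}^{-5/2}$ correctly identifies the resulting identity with (\ref{herm1}) (indeed showing the two conditions are equivalent, not just implied).
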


\begin{proof} Straightforward computation, which uses
(\ref{dual-multi}) and (\ref{pull-chern}).
\end{proof}

\subsection{CV-structures and duality}

A CV-structure on the holomorphic tangent bundle of a complex
manifold $M$ is a $\tilde{D}\tilde{C}\tilde{h}\tilde{k}$-structure
together with two endomorphisms $\tilde{\mathcal U}$ and
$\tilde{\mathcal Q}$ of $T^{1,0}M$, satisfying some additional
compatibility conditions. In particular, the endomorphism
$\tilde{\mathcal Q}$ is hermitian with respect to $\tilde{h}$ and,
as it turns out, $\tilde{\mathcal U}=\tilde{C}_{E}$, where $E$ is
an Euler vector field of weight one of the underlying $F$-manifold
$(M, \circ ,e).$

It is immediately clear that CV-structures are not preserved by
our duality of $F$-manifolds with eventual identities. The reason
is that if $E$ is an invertible Euler vector field on an
$F$-manifold $(M, \circ ,e)$, then $e$ is not Euler for the dual
$F$-manifold $(M, *, {E}).$ With this motivation, in Section
\ref{s1} we define CV-structures in a weaker sense, with the Euler
vector field replaced by an eventual identity. In Section \ref{s2}
we prove that weak CV-structures so defined are preserved by our
duality of $F$-manifolds with eventual identities, provided that
the eventual identity satisfies conditions (\ref{herm1}) and
(\ref{herm2}) of Theorem \ref{ttstar}.

\subsubsection{Weak CV-structures}\label{s1}

We begin by recalling basic definitions and results about
CV-structures on the holomorphic tangent bundle of a complex
manifold. Our treatment of CV-structures follows closely
\cite{hert}, where more details and proofs can be found. It is
worth remarking that sometimes our conventions differ from those
used in \cite{hert}. While we use the generic notation $\tilde{C}$
for a Higgs field and $\tilde{C}^{\flat}$ for its adjoint with
respect to a hermitian metric, the general notation in \cite{hert}
for a Higgs field is $C$ and $\tilde{C}$ denotes its adjoint with
respect to a hermitian metric. Moreover, in our conventions
$\tilde{C}$ is related to the associated multiplication $\circ$ on
the tangent bundle by $\tilde{C}_{X}Y = X\circ Y$, while in
\cite{hert} $C_{X}Y = - X\circ Y$. Hopefully these differences
will not generate any confusion.

\begin{defn}\label{cv} A CV-structure is a
$\tilde{D}\tilde{C}\tilde{h}\tilde{k}$-structure $(T^{1,0}M,
\tilde{C}, \tilde{h}, \tilde{k})$ together with two endomorphisms
$\tilde{\mathcal U}$ and $\tilde{\mathcal Q}$ of $T^{1,0}M$ such
that the
following conditions hold:\\

i) for any $X\in T^{1,0}M$, $[\tilde{C}_{X}, \tilde{\mathcal U}]=0.$\\

ii) $\tilde{D}_{\bar{X}}\tilde{\mathcal U}=0$ for any $X\in
T^{1,0}M$, i.e. if $Z\in {\mathcal T}_{M}$ then also
$\tilde{\mathcal U}(Z)\in {\mathcal T}_{M}$.\\

iii) the $(1,0)$-part of $\tilde{D}\tilde{\mathcal U}$ has the
following expression:
\begin{equation}\label{u}\tilde{D}_{X}\tilde{\mathcal U} +
[\tilde{C}_{X}, \tilde{\mathcal Q}] -\tilde{C}_{X}=0,\quad\forall
X\in T^{1,0}M.
\end{equation}

iv) $\tilde{\mathcal Q}$ is hermitian with respect to $\tilde{h}$;
moreover, $\tilde{\mathcal{Q}} +\tilde{k}{\tilde{\mathcal
Q}}\tilde{k} =0$, or, equivalently, $\tilde{\mathcal Q}$ is
skew-symmetric with respect to complex bilinear form $\tilde{g}$
on $T^{1,0}M$, defined as usual by $\tilde{g}(X, Y) = \tilde{h}(X, \tilde{k}Y)$.\\

v) the $(1,0)$-part of $\tilde{D}\tilde{\mathcal Q}$ has the
following expression:
\begin{equation}\label{cond-q} \tilde{D}_{X}{ \tilde{\mathcal
Q}}-[\tilde{C}_{X}, \tilde{k}\tilde{\mathcal
U}\tilde{k}]=0,\quad\forall X\in T^{1,0}M.
\end{equation}

\end{defn}

Let $\circ$ be the multiplication on $T^{1,0}M$, related to the
Higgs field $\tilde{C}$ by $X\circ Y:=\tilde{C}_{X}Y$, for any $X
,Y\in {\mathcal T}^{1,0}_{M}$ and denote by $e\in {\mathcal
T}_{M}$ its unity vector field. Recall that $(M, \circ ,e)$ is an
$F$-manifold (this is a consequence of the $tt^{*}$-equation
$\partial^{\tilde{D}}\tilde{C}=0$, see Lemma 4.3 of \cite{hert}).
From {\it i)}, $\tilde{\mathcal U}$ is the multiplication by a
vector field $\mathcal E=\tilde{\mathcal U}(e)\in {\mathcal
T}^{1,0}M.$ Condition {\it ii)} together with $e\in {\mathcal
T}_{M}$ imply that $\mathcal E$ is holomorphic and condition
(\ref{u}) with $\tilde{\mathcal U} = \tilde{C}_{\mathcal E}$
implies that $\mathcal E$ is an Euler vector field of weight one
for $(M, \circ ,e)$
(again by Lemma 4.3 of \cite{hert}).\\

We now define the more general notion of weak CV-structures.

\begin{defn} A weak CV-structure is a
$\tilde{D}\tilde{C}\tilde{h}\tilde{k}$-structure $(T^{1,0}M,
\tilde{C}, \tilde{h}, \tilde{k})$ together with two endomorphisms
$\tilde{\mathcal U} = \tilde{C}_{\mathcal E}$ (where ${\mathcal
E}\in {\mathcal T}_{M}$) and $\tilde{\mathcal Q}$ of $T^{1,0}M$,
satisfying all conditions of Definition \ref{cv}, except that
(\ref{u}) is replaced by the weaker condition
\begin{equation}\label{u-weak}
\tilde{D}_{X}\tilde{\mathcal U} + [\tilde{C}_{X}, \tilde{\mathcal
Q}] -\tilde{C}_{[e, {\mathcal E}]}\tilde{C}_{X}=0,\quad\forall
X\in T^{1,0}M.
\end{equation}
\end{defn}

While a CV-structure determines a preferred Euler vector field on
the underlying $F$-manifold, a weak CV-structure determines a
vector field $\mathcal E$ which satisfies the weaker condition
(\ref{slaba}), see below. In particular, if $\mathcal E$ is
invertible, then $\mathcal E$ is an eventual identity.

\begin{lem}\label{inceput} Let $(M, \circ ,e)$ be an $F$-manifold and $\tilde{D}$
a connection on $T^{1,0}M$ such that
$\partial^{\tilde{D}}\tilde{C}=0$, where $\tilde{C}_{X}Y= X\circ
Y$ is the Higgs field. Let $\mathcal E$ be a vector field of type
$(1,0)$ on $M$.\\

i) Assume that
\begin{equation}\label{slaba}
L_{\mathcal E}(\circ )(X, Y) = [e, {\mathcal E}]\circ X\circ
Y,\quad\forall X, Y\in {\mathcal T}^{1,0}_{M}.
\end{equation}
Then
\begin{equation}\label{du}
\tilde{D}_{X}(\tilde{C}_{\mathcal E}) + [\tilde{C}_{X},
\tilde{D}_{\mathcal E} - L_{\mathcal E}] -\tilde{C}_{[e, {\mathcal
E}]}\tilde{C}_{X}=0,\quad\forall X\in {\mathcal T}^{1,0}_{M}.
\end{equation}

ii) Conversely, assume that
\begin{equation}\label{du2}
\tilde{D}_{X}(\tilde{C}_{\mathcal E}) + [\tilde{C}_{X}, {\mathcal
Q}] -\tilde{C}_{[e, {\mathcal E}]}\tilde{C}_{X}=0,\quad\forall
X\in {\mathcal T}^{1,0}_{M},
\end{equation}
for an endomorphism $\tilde{Q}$ of $T^{1,0}M$. Then $\mathcal E$
satisfies (\ref{slaba}) and $\tilde{Q}$ is equal to
$\tilde{D}_{\mathcal E}-L_{\mathcal E}$ up to addition with
$\tilde{C}_{Z}$, for $Z\in {\mathcal T}^{1,0}_{M}$.
\end{lem}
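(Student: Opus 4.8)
The plan is to prove both directions by directly manipulating the Higgs-field identity $\partial^{\tilde{D}}\tilde{C}=0$, which in unpacked form reads
$$
\tilde{D}_{X}(\tilde{C}_{Y}) - \tilde{D}_{Y}(\tilde{C}_{X}) - \tilde{C}_{[X,Y]} = 0, \quad \forall X, Y \in {\mathcal T}^{1,0}_{M}.
$$
The key bridge between the Lie-derivative language of condition (\ref{slaba}) and the connection language of (\ref{du})--(\ref{du2}) is the operator $\tilde{D}_{\mathcal E} - L_{\mathcal E}$, which is tensorial (an endomorphism of $T^{1,0}M$) precisely because the difference of a covariant derivative and a Lie derivative kills the derivative-on-the-argument terms. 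This is why it, rather than $\tilde{D}_{\mathcal E}$ alone, is the natural candidate for $\tilde{\mathcal Q}$.

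For part {\it i)}, I would first compute $L_{\mathcal E}(\tilde{C}_{X}) = \tilde{C}_{L_{\mathcal E}(X)} + (L_{\mathcal E}\tilde{C})_{X}$, where the Leibniz-type term $(L_{\mathcal E}\tilde{C})_{X}Y = L_{\mathcal E}(\circ)(X,Y)$ is exactly the object controlled by hypothesis (\ref{slaba}), giving $(L_{\mathcal E}\tilde{C})_{X} = \tilde{C}_{[e,{\mathcal E}]}\tilde{C}_{X}$. Next I would expand $\tilde{D}_{X}(\tilde{C}_{\mathcal E})$ using $\partial^{\tilde{D}}\tilde{C}=0$ to trade it for $\tilde{D}_{\mathcal E}(\tilde{C}_{X}) + \tilde{C}_{[X,{\mathcal E}]}$. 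Then the commutator $[\tilde{C}_{X}, \tilde{D}_{\mathcal E} - L_{\mathcal E}]$ acting on a vector field, combined with the identity $L_{\mathcal E}(\tilde{C}_{X}Y) = (L_{\mathcal E}\tilde{C})_{X}Y + \tilde{C}_{L_{\mathcal E}X}Y + \tilde{C}_{X}(L_{\mathcal E}Y)$, should produce exactly the cancellation that turns (\ref{slaba}) into (\ref{du}). The main bookkeeping is keeping track of which terms are Lie brackets $[X,{\mathcal E}]$ versus covariant derivatives, and verifying that $[\tilde{C}_X, L_{\mathcal E}]$ reconstitutes the $\tilde{C}_{[X,{\mathcal E}]}$ and $(L_{\mathcal E}\tilde C)_X$ contributions.

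For part {\it ii)}, the strategy is to subtract: given (\ref{du2}) with an arbitrary endomorphism $\tilde Q$ and the identity (\ref{du}) from part {\it i)} (which I can invoke once I know (\ref{slaba}) holds), the difference would force $[\tilde{C}_X, \tilde{Q} - (\tilde{D}_{\mathcal E}-L_{\mathcal E})] = 0$ for all $X$. But I cannot yet assume (\ref{slaba}); so instead I would first \emph{derive} (\ref{slaba}) from (\ref{du2}). The trick is to apply both sides of (\ref{du2}) to the unity field $e$ and exploit $\tilde{C}_e = \mathrm{id}$, $\tilde{C}_X e = X$, and the fact that $\tilde{D}e$ and the curvature-free structure pin down $L_{\mathcal E}(\circ)$. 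Concretely, evaluating the endomorphism identity on a second vector field $Y$ and symmetrizing/antisymmetrizing should isolate $L_{\mathcal E}(\circ)(X,Y) - [e,{\mathcal E}]\circ X\circ Y$ as the obstruction, which the hypothesis forces to vanish. Once (\ref{slaba}) is established, part {\it i)} applies and the subtraction argument shows $\tilde{Q} - (\tilde{D}_{\mathcal E}-L_{\mathcal E})$ commutes with every $\tilde{C}_X$; since the $\tilde{C}_X$ generate multiplication by all vector fields and the commutant of a faithful commutative multiplication consists of multiplications by vector fields, $\tilde{Q} - (\tilde{D}_{\mathcal E}-L_{\mathcal E}) = \tilde{C}_Z$ for some $Z \in {\mathcal T}^{1,0}_{M}$.

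I expect the hardest step to be the reverse implication in part {\it ii)}: extracting (\ref{slaba}) purely from the operator identity (\ref{du2}) without circularly assuming it. The delicate point is that (\ref{du2}) is an identity among endomorphisms, so I must feed it carefully chosen vector fields (in particular $e$) and compare the resulting tensors, using $\partial^{\tilde{D}}\tilde{C}=0$ to eliminate the unknown $\tilde D$-terms, so that the only surviving discrepancy is precisely the quantity whose vanishing is (\ref{slaba}). The concluding "commutant equals multiplications" step is standard commutative-algebra folklore for $F$-manifolds but should be stated explicitly.
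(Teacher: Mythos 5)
Your proposal is correct and takes essentially the same approach as the paper: part \emph{i)} is the identical computation (use $\partial^{\tilde{D}}\tilde{C}=0$ to trade $\tilde{D}_{X}(\tilde{C}_{\mathcal E})$ for $\tilde{D}_{\mathcal E}(\tilde{C}_{X})+\tilde{C}_{[X,{\mathcal E}]}$ and absorb $[\tilde{C}_{X},L_{\mathcal E}]$ into $L_{\mathcal E}(\circ)$), while part \emph{ii)} is the same subtraction-plus-commutant argument, the paper merely outsourcing the step ``(\ref{du2}) implies (\ref{slaba})'' to an argument of Hertling (Lemma 4.3 of \cite{hert}) that your evaluation-at-$e$ and antisymmetrization sketch correctly reconstructs. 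One cosmetic point: when executed, the evaluation at $e$ forces $\tilde{Q}-(\tilde{D}_{\mathcal E}-L_{\mathcal E})=\tilde{C}_{Z}$ \emph{at the same time} as it yields (\ref{slaba}) (killing the commutator term requires first knowing that this difference is a multiplication operator), so your final subtraction step comes out redundant rather than necessary, which is harmless.
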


\begin{proof} Assume that (\ref{slaba}) holds. Then, for any $X\in {\mathcal
T}^{1,0}_{M}$,
\begin{align*}
\tilde{D}_{X} (\tilde{C}_{\mathcal E}) + [ \tilde{C}_{X},
\tilde{D}_{\mathcal E} - L_{\mathcal E}]= \tilde{D}_{X}
(\tilde{C}_{\mathcal E}) -\tilde{D}_{\mathcal E} (\tilde{C}_{X})
+[L_{\mathcal E}, \tilde{C}_{X}]\\
= \tilde{C}_{[X, {\mathcal E}]} +L_{\mathcal E}(X\circ ) =
\tilde{C}_{[e, {\mathcal E}]\circ X},
\end{align*}
where we used the $tt^{*}$-equation
${\partial}^{\tilde{D}}\tilde{C}=0$ and the condition
(\ref{slaba}). Our first claim follows. We now prove the second
claim. As already mentioned above, if $[e, {\mathcal E}]=e$ then
(\ref{du2}) implies that $\mathcal E$ is Euler of weight one.
Without this additional assumption, the same argument shows that
(\ref{du2}) implies (\ref{slaba}). Therefore, (\ref{du}) holds as
well and ${\mathcal Q}-\tilde{D}_{\mathcal E} + L_{\mathcal E}$
commutes with $\tilde{C}_{X}$ for any $X\in T^{1,0}M.$ Thus
${\mathcal Q}-\tilde{D}_{\mathcal E} + L_{\mathcal E}$ is the
multiplication by a vector field $Z\in {\mathcal T}^{1,0}_{M}.$

\end{proof}

The following Proposition provides a useful characterization of
weak CV-structures. A similar statement for CDV-structures already
appears in the literature (see Theorem 2.1 of \cite{lin}).

\begin{prop}\label{ajutatoare-fin}
Let $(T^{1,0}M, \tilde{C}, \tilde{h},\tilde{k})$ be a
$\tilde{D}\tilde{C}\tilde{h}\tilde{k}$-structure. Define
$\tilde{g}(X, Y) = \tilde{h}(X, \tilde{k}Y)$ as usual and let
$\mathcal E$ be an eventual identity of the underlying
$F$-manifold $(M, \circ ,e )$. Then
$(T^{1,0}M,\tilde{C},\tilde{h}, \tilde{k},
\tilde{\mathcal{U}}=\tilde{C}_{\mathcal E})$ extends to a weak
CV-structure (i.e. there is an endomorphism $\tilde{Q}$ of
$T^{1,0}M$ such that $(T^{1,0}M, \tilde{C}, \tilde{h}, \tilde{k},
\tilde{\mathcal U},\tilde{Q})$ is a weak CV-structure) if and only
if there is $Z\in {\mathcal T}_{M}$ such that
\begin{equation}\label{h}
L_{\mathcal E -\bar{\mathcal E}}(\tilde{h})(X, Y)= \tilde{h}(X,
Y\circ Z)- \tilde{h}(X\circ Z, Y),\quad\forall X, Y\in T^{1,0}M
\end{equation}
and
\begin{equation}\label{lieg}
L_{\mathcal E}(\tilde{g}) (X, Y)= -2\tilde{g}(X\circ Y,
Z),\quad\forall X, Y\in T^{1,0}M
\end{equation}
hold. Moreover, $Z$ is uniquely determined by (\ref{lieg}) and
\begin{equation}\label{tilde-q}
\tilde{Q} = \tilde{D}_{\mathcal E} -L_{\mathcal E} +\tilde{C}_{Z}.
\end{equation}
\end{prop}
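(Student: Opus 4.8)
The plan is to go through the conditions i)--v) defining a weak CV-structure (those of Definition \ref{cv}, with (\ref{u}) replaced by (\ref{u-weak})) and reduce them, one by one, to (\ref{h}) and (\ref{lieg}). Conditions i) and ii) hold automatically and independently of any choice: i) holds because $\tilde{\mathcal U}=\tilde{C}_{\mathcal E}$ commutes with every $\tilde{C}_{X}$, by commutativity and associativity of $\circ$; ii) holds because $\mathcal E$, being an eventual identity of the holomorphic $F$-manifold, is holomorphic, so $\tilde{\mathcal U}=\tilde{C}_{\mathcal E}$ preserves holomorphic sections. For the replaced condition (\ref{u-weak}) I would invoke Lemma \ref{inceput}: since $\mathcal E$ is an eventual identity it satisfies (\ref{slaba}), and part ii) of that Lemma says that (\ref{u-weak}) holds for some endomorphism $\tilde{Q}$ if and only if $\tilde{Q}=\tilde{D}_{\mathcal E}-L_{\mathcal E}+\tilde{C}_{Z}$ for some $(1,0)$-vector field $Z$. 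This already yields formula (\ref{tilde-q}) and reduces the whole problem to deciding for which $Z$ the endomorphism $\tilde{Q}=\tilde{D}_{\mathcal E}-L_{\mathcal E}+\tilde{C}_{Z}$ satisfies conditions iv) and v).

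Next I would analyse condition iv), that $\tilde{Q}$ be skew-symmetric with respect to $\tilde{g}$ and hermitian with respect to $\tilde{h}$. For the $\tilde{g}$-skew-symmetry I would compute $\tilde{g}(\tilde{Q}X,Y)+\tilde{g}(X,\tilde{Q}Y)$ directly: the compatibility $\tilde{D}\tilde{g}=0$ gives $\tilde{g}((\tilde{D}_{\mathcal E}-L_{\mathcal E})X,Y)+\tilde{g}(X,(\tilde{D}_{\mathcal E}-L_{\mathcal E})Y)=L_{\mathcal E}(\tilde{g})(X,Y)$ for holomorphic $X,Y$, while invariance of $\tilde{g}$ gives $\tilde{g}(\tilde{C}_{Z}X,Y)+\tilde{g}(X,\tilde{C}_{Z}Y)=2\tilde{g}(X\circ Y,Z)$. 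Hence the $\tilde{g}$-skew-symmetry of $\tilde{Q}$ is precisely (\ref{lieg}). Specialising (\ref{lieg}) to $Y:=e$ gives $\tilde{g}(X,Z)=-\tfrac12 L_{\mathcal E}(\tilde{g})(X,e)$ for all $X$, so non-degeneracy of $\tilde{g}$ forces $Z$ to be unique (and holomorphic, since $\mathcal E$ and $\tilde{g}$ are); this proves the uniqueness assertion and, via (\ref{tilde-q}), pins down $\tilde{Q}$. The $\tilde{h}$-hermiticity of $\tilde{Q}$ is the analogous but harder computation, and I expect it to be the main obstacle: since $\tilde{h}$ is only conjugate-linear, the Chern-connection compatibility $\tilde{D}\tilde{h}=0$ couples the $(1,0)$-direction $\mathcal E$ in one slot with the conjugate direction $\bar{\mathcal E}$ in the other, so the antiholomorphic field $\bar{\mathcal E}$ necessarily enters, and careful bookkeeping of the $(1,0)$ and $(0,1)$ parts should turn $\tilde{h}(\tilde{Q}X,Y)-\tilde{h}(X,\tilde{Q}Y)$ into exactly the combination $L_{\mathcal E-\bar{\mathcal E}}(\tilde{h})(X,Y)-\tilde{h}(X,Y\circ Z)+\tilde{h}(X\circ Z,Y)$ appearing in (\ref{h}).

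Finally I would check condition v), namely $\tilde{D}_{X}\tilde{Q}=[\tilde{C}_{X},\tilde{k}\tilde{\mathcal U}\tilde{k}]$ for $X\in T^{1,0}M$, and argue that it imposes nothing beyond (\ref{h}) and (\ref{lieg}). Using $\tilde{Q}=\tilde{D}_{\mathcal E}-L_{\mathcal E}+\tilde{C}_{Z}$, the identities $\tilde{k}\tilde{\mathcal U}\tilde{k}=\tilde{C}^{\flat}_{\bar{\mathcal E}}=\tilde{\mathcal U}^{\flat}$ and $\tilde{C}^{\flat}_{X}=\tilde{k}\tilde{C}_{\bar X}\tilde{k}$, together with $\tilde{D}\tilde{h}=0$ and the two $tt^{*}$-equations of the $\tilde{D}\tilde{C}\tilde{h}\tilde{k}$-structure, I expect v) to follow from (\ref{u-weak}) and the hermiticity established in iv) by an adjointness argument; in particular it should produce no condition on $Z$ beyond (\ref{h}) and (\ref{lieg}), so that the full list of weak CV-structure conditions collapses to exactly these two. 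Thus the genuine work is concentrated in the hermiticity step of iv); everything else is either automatic or formal once Lemma \ref{inceput} is in place.
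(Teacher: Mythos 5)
Your skeleton coincides with the paper's own proof: invoke Lemma \ref{inceput} to force $\tilde{Q}=\tilde{D}_{\mathcal E}-L_{\mathcal E}+\tilde{C}_{Z}$, identify the $\tilde{g}$-skew-symmetry of $\tilde{Q}$ with (\ref{lieg}) and its $\tilde{h}$-hermiticity with (\ref{h}), and get uniqueness of $Z$ from non-degeneracy of $\tilde{g}$. One misjudgement of emphasis: the hermiticity step you flag as ``the main obstacle'' is in fact no harder than the $\tilde{g}$-step. Since $\mathcal E$ is holomorphic and $\tilde{D}^{(0,1)}=\bar{\partial}$, one has $L_{\mathcal E}(\tilde{h})(Y,V)=\tilde{h}((\tilde{D}_{\mathcal E}-L_{\mathcal E})Y,V)$ and $L_{\bar{\mathcal E}}(\tilde{h})(Y,V)=\tilde{h}(Y,(\tilde{D}_{\mathcal E}-L_{\mathcal E})V)$, and subtracting these two lines turns ``$\tilde{Q}$ is $\tilde{h}$-hermitian'' into (\ref{h}) immediately; this is exactly what the paper does.

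The genuine gap is your treatment of condition v), equation (\ref{cond-q}), which you dismiss as following from (\ref{u-weak}) and hermiticity ``by an adjointness argument'' and as producing ``no condition on $Z$''. That argument cannot work: for the Chern connection, taking $\tilde{h}$-adjoints exchanges derivative types, $(\tilde{D}_{X}A)^{\flat}=\tilde{D}_{\bar{X}}(A^{\flat})$, so the adjoint of (\ref{u-weak}) (even using $\tilde{Q}^{\flat}=\tilde{Q}$) constrains the $(0,1)$-derivative $\tilde{D}_{\bar{X}}(\tilde{k}\tilde{C}_{\mathcal E}\tilde{k})$, not the $(1,0)$-derivative $\tilde{D}_{X}\tilde{Q}$ that appears in v). In the paper, verifying v) is the bulk of the proof: combining the curvature $tt^{*}$-equation $R^{\tilde{D}}_{X,\bar{\mathcal E}}+[\tilde{C}_{X},\tilde{k}\tilde{C}_{\mathcal E}\tilde{k}]=0$, the relation $\tilde{D}\tilde{k}=0$, the $(1,1)$-type of $R^{\tilde{D}}$, and the identity $L_{{\mathcal E}-\bar{\mathcal E}}(\tilde{k})=\tilde{k}\tilde{C}_{Z}+\tilde{C}_{Z}\tilde{k}$ (itself extracted from (\ref{h}) and (\ref{lieg})), one arrives at
\[
\tilde{D}_{X}(\tilde{Q})-[\tilde{C}_{X},\tilde{k}\tilde{C}_{\mathcal E}\tilde{k}]=-\tilde{k}\tilde{D}_{\bar{X}}(\tilde{C}_{Z})\tilde{k},
\]
so v) holds if and only if $Z$ is holomorphic. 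This is precisely why the statement demands $Z\in{\mathcal T}_{M}$ rather than a merely smooth $(1,0)$-field, and it is the content your plan omits. Your parenthetical claim that $Z$ is automatically holomorphic because ``$\tilde{g}$ is'' can be justified ($\tilde{D}\tilde{g}=0$ together with $\tilde{D}^{(0,1)}=\bar{\partial}$ makes $\tilde{g}$ holomorphic on holomorphic arguments), but even granting it, condition v) still has to be connected to the holomorphy of $Z$ by the computation above; it does not follow formally from conditions i)--iv).
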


\begin{proof}
Since $\mathcal E$ is an eventual identity,  Lemma \ref{inceput}
implies that any endomorphism $\tilde{Q}$ such that $(T^{1,0}M,
\tilde{h}, \tilde{k}, \tilde{\mathcal U},\tilde{Q})$ is a weak
CV-structure must be of the form (\ref{tilde-q}), with $Z\in
{\mathcal T}^{1,0}_{M}$, and such that the relations
\begin{equation}\label{h1}
\tilde{h}(\tilde{Q}(Y), V) = \tilde{h}(Y, \tilde{Q}(V))
\end{equation}
and
\begin{equation}\label{g1a}
\tilde{g}(\tilde{Q}(Y), V) +\tilde{g}(Y, \tilde{Q}(V))=0,
\end{equation}
hold, for any $Y, V\in {\mathcal T}^{1,0}_{M}.$ We will show that
(\ref{h}) and (\ref{lieg}) are equivalent with (\ref{h1}) and
(\ref{g1a}) respectively. Since $\tilde{D}$ is the Chern
connection of $(T^{1,0}M, \tilde{h})$, for any $X\in {\mathcal
T}_{M}$ and $Y, V\in {\mathcal T}^{1,0}_{M}$,
\begin{align*}
L_{X}(\tilde{h})(Y, V)&= X\tilde{h}(Y, V) - \tilde{h}(L_{X}Y, V) -
\tilde{h}(Y, L_{\bar{X}}V)\\
&= \tilde{h}((\tilde{D}_{X} -L_{X})(Y), V) - \tilde{h}(Y,
(\tilde{D}_{\bar{X}}-L_{\bar{X}})(V)).\\
\end{align*}
On the other hand, since $X$ is holomorphic and
$\tilde{D}^{(0,1)}= \bar{\partial}$, $L_{\bar{X}} =
\tilde{D}_{\bar{X}}$ on $T^{1,0}M$ and we obtain
\begin{equation}\label{withoutb}
L_{X}(\tilde{h})(Y, V) =\tilde{h}( (\tilde{D}_{X}-L_{X})Y,
V),\quad\forall Y, V\in {\mathcal T}^{1,0}_{M}.
\end{equation}
Similarly,
\begin{equation}\label{without}
L_{\bar{X}}(\tilde{h})(Y, V) =\tilde{h}(Y,
(\tilde{D}_{X}-L_{X})V),\quad\forall Y, V\in {\mathcal
T}^{1,0}_{M}.
\end{equation}
Relations (\ref{withoutb}) and (\ref{without}) with $X= {\mathcal
E}$ imply that (\ref{h}) is equivalent with (\ref{h1}). A similar
argument which uses
\begin{equation}\label{deriv-lie}
L_{X}(\tilde{g})(Y, Z) =\tilde{g}( (\tilde{D}_{X}-L_{X})Y, Z) +
\tilde{g}( Y,(\tilde{D}_{X}-L_{X})Z)
\end{equation}
shows that (\ref{lieg}) is equivalent with (\ref{g1a}).

Assume now that there is $Z\in {\mathcal T}^{1,0}_{M}$ (uniquely
determined, since $\tilde{g}$ is non-degenerate) such that both
(\ref{h}) and (\ref{lieg}) are satisfied and define an
endomorphism $\tilde{Q}$ of $T^{1,0}M$ by (\ref{tilde-q}). Then
$(T^{1,0}M, \tilde{C}, \tilde{h}, \tilde{k}, \tilde{\mathcal U},
\tilde{Q})$ is a weak CV-structure provided that relation
(\ref{cond-q}) is satisfied. We will show that (\ref{cond-q}) is
satisfied if and only if $Z$ is holomorphic. For this we make the
following computation: for any $X\in {\mathcal T}_{M}$,
\begin{equation}\label{computation}
\tilde{D}_{X}(\tilde{D}_{\mathcal E} - L_{\mathcal E})
-[\tilde{C}_{X}, \tilde{k}\tilde{C}_{\mathcal E}\tilde{k}] =
[\tilde{D}_{X},\tilde{D}_{\mathcal E} - L_{\mathcal E}]
+[\tilde{D}_{X}, \tilde{D}_{\bar{\mathcal E}}] = \tilde{D}_{[X,
{\mathcal E}]} - [\tilde{D}_{X}, L_{{\mathcal E}-\bar{\mathcal
E}}]
\end{equation}
where in the first equality we used
\begin{equation}\label{t-crv}
[\tilde{C}_{X}, \tilde{k}\tilde{C}_{\mathcal E}\tilde{k}] = -
R^{\tilde{D}}_{X, \bar{\mathcal E}} =
-[\tilde{D}_{X},\tilde{D}_{\bar{\mathcal E}}]
\end{equation}
(from the $tt^{*}$-equation and $[X, \bar{\mathcal E}]=0$) and in
the second equality we used $\tilde{D}_{\bar{\mathcal E}}=
L_{\bar{\mathcal E}}$, because $\mathcal E\in {\mathcal T}_{M}$,
and $[\tilde{D}_{X},\tilde{D}_{\mathcal E}]= \tilde{D}_{[X,
{\mathcal E}]}$, because the curvature of $\tilde{D}$ is of type
$(1,1)$. On the other hand, using (\ref{h}) and (\ref{lieg}) and
taking the Lie derivative of $\tilde{g}(X, Y) = \tilde{h}(X,
\tilde{k}Y)$ with respect to $\mathcal E$, we get
\begin{equation}
L_{{\mathcal E}-\bar{\mathcal E}}(\tilde{k}) =
\tilde{k}\tilde{C}_{Z} + \tilde{C}_{Z}\tilde{k}
\end{equation}
or, equivalently,
\begin{equation}\label{derivv-k}
L_{{\mathcal E}-\bar{\mathcal E}}(Y)= - \tilde{k} L_{{\mathcal
E}-\bar{\mathcal E}}(\tilde{k}Y) +\tilde{C}_{Z}Y
+\tilde{k}\tilde{\mathcal C}_{Z}\tilde{k}(Y),\quad\forall Y\in
{\mathcal T}^{1,0}_{M}.
\end{equation}
From (\ref{derivv-k}), relation (\ref{computation}) becomes
\begin{align*}
\tilde{D}_{X}(\tilde{D}_{\mathcal E} - L_{\mathcal E})
-[\tilde{C}_{X}, \tilde{k}\tilde{C}_{\mathcal E}\tilde{k}] &=
\tilde{D}_{[X, {\mathcal E}]} +
[\tilde{k}\tilde{D}_{\bar{X}}\tilde{k}, \tilde{k}L_{{\mathcal
E}-\bar{\mathcal E}}\tilde{k}] - [\tilde{D}_{X},
\tilde{C}_{Z}+\tilde{k}\tilde{C}_{Z}\tilde{k}]\\
& =\tilde{D}_{[X, {\mathcal E}]}+\tilde{k}[\tilde{D}_{\bar{X}},
L_{{\mathcal E}-\bar{\mathcal E}}]\tilde{k}
-\tilde{D}_{X}(\tilde{C}_{Z})-\tilde{k}\tilde{D}_{\bar{X}}(\tilde{C}_{Z})\tilde{k}\\
&= \tilde{D}_{[X, {\mathcal E}]}-\tilde{k}
\tilde{D}_{\overline{[X, {\mathcal E}]}}\tilde{k}
-\tilde{D}_{X}(\tilde{C}_{Z})-
\tilde{k}D_{\bar{X}}(\tilde{C}_{Z})\tilde{k}\\
& = -\tilde{D}_{X}(\tilde{C}_{Z})
-\tilde{k}\tilde{D}_{\bar{X}}(\tilde{C}_{Z}) \tilde{k}
\end{align*}
where we used $\tilde{D}_{X}(Y)=
\tilde{k}\tilde{D}_{\bar{X}}(\tilde{k}Y)$ (because
$\tilde{D}\tilde{k}=0$) and
$$
[\tilde{D}_{\bar{X}}, L_{\mathcal E}] = [L_{\bar{X}}, L_{\mathcal
E}] = L_{[\bar{X}, {\mathcal E}]}=0,
$$
(because $X, \mathcal E\in {\mathcal T}_{M}$). We deduce that
$$
\tilde{D}_{X} (Q) - [\tilde{C}_{X}, \tilde{k}\tilde{C}_{\mathcal
E}\tilde{k}] =
-\tilde{k}\tilde{D}_{\bar{X}}(\tilde{C}_{Z})\tilde{k}.
$$
Therefore, (\ref{cond-q}) is satisfied if and only if
$\tilde{D}_{\bar{X}}(\tilde{C}_{Z})=0$, for any $X\in {\mathcal
T}^{1,0}_{M}$, i.e. $Z$ is holomorphic. Our claim follows.

\end{proof}

\subsubsection{Weak CV-structures and duality}\label{s2}

Our aim in this Section is to prove the following result.

\begin{thm}\label{ttstar-weak} Let $(T^{1,0}M, \tilde{C}, \tilde{h}, \tilde{k})$ be
a $\tilde{D}\tilde{C}\tilde{h}\tilde{k}$-structure, $\mathcal E$
an eventual identity on the underlying $F$-manifold $(M, \circ
,e)$ and $\tilde{\mathcal U} := \tilde{C}_{\mathcal E}.$ Assume
that conditions (\ref{herm1}) and (\ref{herm2}) are satisfied and
let $(T^{1,0}M, {C}, {h}, {k})$ be the dual
${D}{C}{h}{k}$-structure, as in Theorem \ref{ttstar}. Then
$(T^{1,0}M, \tilde{C}, \tilde{h}, \tilde{k}, \tilde{\mathcal U} )$
extends to a weak CV-structure if and only if $(T^{1,0}M, {C},
{h}, {k}, {\mathcal U}:= C_{e})$ extends to a weak CV-structure.
\end{thm}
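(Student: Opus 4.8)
The plan is to reduce both extension problems to the criterion of Proposition \ref{ajutatoare-fin} and then to match the two resulting conditions. By Theorem \ref{ttstar} {\it iii)} the hypotheses (\ref{herm1}) and (\ref{herm2}) guarantee that $(T^{1,0}M, C, h, k)$ is a $DChk$-structure, and by Theorem \ref{main} {\it ii)} the field $e$ is an eventual identity of the dual $F$-manifold $(M, *, {\mathcal E})$. Applying Proposition \ref{ajutatoare-fin} to $(T^{1,0}M, \tilde{C}, \tilde{h}, \tilde{k})$ with eventual identity $\mathcal E$ and to $(T^{1,0}M, C, h, k)$ with eventual identity $e$, the claim becomes the equivalence of the two statements: {\it (a)} there is a holomorphic $Z$ with $L_{\mathcal E}(\tilde{g})(X,Y) = -2\tilde{g}(X\circ Y, Z)$ and $L_{\mathcal E-\bar{\mathcal E}}(\tilde{h})(X,Y) = \tilde{h}(X, Y\circ Z) - \tilde{h}(X\circ Z, Y)$; {\it (b)} there is a holomorphic $Z'$ with $L_{e}(g)(X,Y) = -2 g(X* Y, Z')$ and $L_{e-\bar{e}}(h)(X,Y) = h(X, Y* Z') - h(X* Z', Y)$.

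The core of the proof is a set of transformation formulas carrying the data of {\it (a)} into the data of {\it (b)}. First I would record the consequence of $\partial^{\tilde{D}}\tilde{C}=0$ that $\tilde{D}_{e}(W\circ Y) = W\circ \tilde{D}_{e}Y + [e, W]\circ Y$ for all $W, Y$, together with the relations $g(X,Y)=\tilde{g}({\mathcal E}^{-1}\circ X, Y)$, $h(X,Y) = \tilde{h}({\mathcal E}^{-1/2}\circ X, {\mathcal E}^{-1/2}\circ Y)$, the identity $X*Y = X\circ Y\circ {\mathcal E}^{-1}$, and the Chern-connection formula (\ref{pull-chern}). A direct computation of $L_{e}(g)$ then yields $L_{e}(g)(X,Y) = L_{e}(\tilde{g})({\mathcal E}^{-1}\circ X, Y) + \tilde{g}([e,{\mathcal E}^{-1}]\circ X, Y)$, so that the $g$-equation in {\it (b)} determines $Z'$ uniquely and explicitly from the $\circ$-invariant form $L_{e}\tilde{g}$; using $[e,{\mathcal E}^{-1}]\circ {\mathcal E}^{2} = [{\mathcal E}, e]$ from the proof of Theorem \ref{main} one obtains a closed expression for $Z'$. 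The hermitian/skew reformulation used in Proposition \ref{ajutatoare-fin}, namely formulas (\ref{withoutb}), (\ref{without}) and (\ref{deriv-lie}), then translates the $h$-equations, and I would verify that this same $Z'$ solves the $h$-equation in {\it (b)} exactly when $Z$ solves the $h$-equation in {\it (a)}, and that $Z'$ is holomorphic iff $Z$ is.

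Since the duality of Theorem \ref{main} {\it ii)} is involutive (the dual of $(M, *, {\mathcal E}, e)$ is again $(M, \circ, e, {\mathcal E})$) and conditions (\ref{herm1}) and (\ref{herm2}) are invariant under it — (\ref{herm1}) by the Proposition following Theorem \ref{ttstar}, and (\ref{herm2}) by inspection — it suffices to prove one implication, say {\it (a)} $\Rightarrow$ {\it (b)}, the reverse following by applying the same argument to the dual structure. I expect the main obstacle to be the hermitian equation, i.e. the passage between $L_{{\mathcal E}-\bar{\mathcal E}}(\tilde{h})$ and $L_{e-\bar{e}}(h)$: unlike the $g$-equation it mixes the holomorphic and antiholomorphic directions and involves the real structure $k = \tilde{C}_{{\mathcal E}^{1/2}}\tilde{k}\,\tilde{C}_{{\mathcal E}^{-1/2}}$, so confirming that the single field $Z'$ produced by the $g$-equation simultaneously solves the $h$-equation will require careful use of $\tilde{D}\tilde{k}=0$, of $\tilde{D}_{\bar{\mathcal E}}=L_{\bar{\mathcal E}}$ on $T^{1,0}M$, and of the compatibility $\tilde{D}\tilde{g}=0$.
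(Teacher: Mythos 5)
Your skeleton---reduce both extension problems to Proposition \ref{ajutatoare-fin}, and use the involutivity of the duality together with the self-duality of (\ref{herm1}) and (\ref{herm2}) to get the reverse implication---matches the structure of the paper's argument (the paper, too, writes out only one direction). But the core step of your plan contains a genuine misconception. You propose to carry the data of \emph{(a)} into the data of \emph{(b)} by transformation formulas, and in particular to ``verify that this same $Z'$ solves the $h$-equation in \emph{(b)} exactly when $Z$ solves the $h$-equation in \emph{(a)}, and that $Z'$ is holomorphic iff $Z$ is.'' No such term-by-term correspondence exists: the conditions in \emph{(a)} are Lie derivatives along $\mathcal{E}$ (namely $L_{\mathcal{E}}\tilde{g}$ and $L_{\mathcal{E}-\bar{\mathcal{E}}}\tilde{h}$), while the conditions in \emph{(b)}, once rewritten through $g=\tilde{g}(\mathcal{E}^{-1}\circ\cdot,\cdot)$, $h=\tilde{h}(\mathcal{E}^{-1/2}\circ\cdot,\mathcal{E}^{-1/2}\circ\cdot)$ and $X*Y=X\circ Y\circ\mathcal{E}^{-1}$, involve only Lie derivatives along $e$ (namely $L_{e}\tilde{g}$ and $L_{e-\bar{e}}\tilde{h}$). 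Your transformation formulas relate $L_{e}(g)$ to $L_{e}(\tilde{g})$, both along $e$; they never produce $L_{\mathcal{E}}(\tilde{g})$ or $L_{\mathcal{E}-\bar{\mathcal{E}}}(\tilde{h})$, so the verification you describe cannot close up into the claimed biconditional.

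What actually happens---and this is the paper's proof---is that the conditions in \emph{(b)} hold \emph{unconditionally} under the standing hypotheses, with no reference to the tilde-side field $Z$ at all. The identity you recorded, $\tilde{D}_{e}(W\circ Y)=W\circ\tilde{D}_{e}Y+[e,W]\circ Y$, i.e. $(\tilde{D}_{e}-L_{e})(X)=(\tilde{D}_{e}e)\circ X$ (relation (\ref{del})), gives outright $L_{e}(\tilde{g})(U,V)=2\tilde{g}((\tilde{D}_{e}e)\circ U,V)$ and, via (\ref{withoutb})--(\ref{without}) with $X=e$, $L_{e-\bar{e}}(\tilde{h})(U,V)=\tilde{h}((\tilde{D}_{e}e)\circ U,V)-\tilde{h}(U,(\tilde{D}_{e}e)\circ V)$. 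Feeding these into your transformation formulas shows that the explicit field $Z'=-(\tilde{D}_{e}e)\circ\mathcal{E}+\tfrac{1}{2}L_{e}(\mathcal{E})$ (relation (\ref{defn-z})) satisfies both (\ref{deriv-g}) and (\ref{deriv-h}); and $Z'$ is holomorphic because $\bar{\partial}_{\bar{X}}(\tilde{D}_{e}e)=R^{\tilde{D}}_{\bar{X},e}e=[\tilde{C}_{e},\tilde{k}\tilde{C}_{X}\tilde{k}]e=0$ by the $tt^{*}$-equation and $\tilde{C}_{e}=\mathrm{id}$---not because the tilde-side $Z$ is holomorphic. Thus under the hypotheses of the theorem the dual structure always extends; the weak CV hypothesis on the tilde side is never invoked, and the ``if and only if'' follows, exactly as you anticipated in your last paragraph, by running the same unconditional construction on the dual structure (whose eventual identity is $e$ and whose unity is $\mathcal{E}$). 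Your plan becomes a correct proof once you abandon the matching of \emph{(a)} with \emph{(b)} and instead verify \emph{(b)} directly with this explicit $Z'$.
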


\begin{proof}
Assume that $(T^{1,0}M,\tilde{C}, \tilde{h}, \tilde{k},
\tilde{\mathcal{U}})$ extends to a weak CV-structure. In order to
apply Proposition \ref{ajutatoare-fin} we need to determine an
holomorphic vector field $Z$ such that both (\ref{h}) and
(\ref{lieg}) hold, with $\circ$ replaced by $*$, $\tilde{h}$
replaced by $h$ and $\tilde{g}$ replaced by $g$. Define
\begin{equation}\label{defn-z}
Z:= -(\tilde{D}_{e}e)\circ {\mathcal E} +\frac{1}{2}
L_{e}({\mathcal E})
\end{equation}
and notice that it is holomorphic: from the $tt^{*}$-equations and
$\tilde{D}^{(0,1)}=\bar{\partial}$, we get:
$$
\bar{\partial}_{\bar{X}}(\tilde{D}_{e}e)=\tilde{D}_{\bar{X}}\tilde{D}_{e}e
= R^{\tilde{D}}_{\bar{X}, e}e= [\tilde{C}_{e},
\tilde{k}\tilde{C}_{{X}}\tilde{k}]=0,\quad\forall X\in T^{1,0}M,
$$
because $e$ is holomorphic and $\tilde{C}_{e}$ is the identity
endomorphism. Therefore, $\tilde{D}_{e}e$ and hence also $Z$ is
holomorphic. We now prove that the relations
\begin{equation}\label{deriv-g}
L_{e}(g)(X, Y) = -2g(X*Z, Y),\quad\forall X, Y\in T^{1,0}M
\end{equation}
and
\begin{equation}\label{deriv-h}
L_{e-\bar{e}}(h)(X, Y) =  h(X, Y*Z) -h(X*Z, Y),\quad\forall X,
Y\in T^{1,0}M
\end{equation}
hold, where $*$ is the dual multiplication
\begin{equation}\label{gres}
X*Y= C_{X}{Y}=X\circ Y\circ {\mathcal E}^{-1},\quad\forall X, Y\in
T^{1,0}M.
\end{equation}
Taking the Lie derivative with respect to $e$ of the relation
$$
g(X, Y) = \tilde{g}( X\circ{\mathcal E}^{-1}, Y)
$$
and using (\ref{deriv-lie}) with $X:= e$, together with
$L_{e}(\circ )=0$ and
\begin{equation}\label{del}
(\tilde{D}_{e}-L_{e})(X) = (\tilde{D}_{e}e)\circ X,\quad\forall
X\in T^{1,0}M
\end{equation}
(relation (\ref{del}) is an easy consequence of the
$tt^{*}$-equation $\partial^{\tilde{D}}\tilde{C}=0$, for details
see Theorem 4.5 of \cite{hert}), we get:
\begin{align*}
L_{e}(g)(X, Y)&= L_{e}(\tilde{g}) (X\circ {\mathcal E}^{-1} , Y)
+\tilde{g}(X\circ L_{e}({\mathcal E}^{-1}), Y)\\
&=2\tilde{g}((\tilde{D}_{e}e)\circ {\mathcal E}^{-1}\circ X, Y)
+\tilde{g}( L_{e}({\mathcal E}^{-1}) \circ X, Y)\\
&= 2g((\tilde{D}_{e}e)\circ X, Y) +g({\mathcal E}\circ
L_{e}({\mathcal E}^{-1})\circ X, Y)\\
&= -2g(X*Z, Y),
\end{align*}
for any $X, Y\in {\mathcal T}^{1,0}_{M}$. Relation (\ref{deriv-g})
follows. A similar computation shows that (\ref{deriv-h}) holds as
well. From Proposition \ref{ajutatoare-fin}, $(T^{1,0}M, C, h, k,
{\mathcal U})$ extends to a weak CV-structure.
\end{proof}

\subsection{The semi-simple case}

Recall that a holomorphic $F$-manifold $(M,\circ)$ is called
semi-simple if there are local coordinates $(u^1\,,\ldots\,,u^m)$
on $M$ such that the multiplication $\circ$ is diagonal (see
Remark \ref{remarks} iii). In the restricted case where the
hermitian metric $\tilde h$ and real structure $\tilde k$ are also
diagonal (and note that in general they need not be diagonal) the
various conditions of Theorem \ref{ttstar} are automatically
satisfied. More precisely, we can state.

\begin{ex} Any eventual identity on a semi-simple $F$-manifold
$(M, \circ, \tilde{h}, \tilde{k})$ with hermitian metric and real
structure taking the form
$$
\frac{\partial}{\partial u^{i}}\circ \frac{\partial}{\partial
u^{j}} = \delta_{ij}\frac{\partial}{\partial u^{j}},\quad
\tilde{h}(\frac{\partial}{\partial u^{i}},\frac{\partial}{\partial
u^{j}})= H_{ii}\delta_{ij},\quad
\tilde{k}(\frac{\partial}{\partial u^{i}}) = k_{i}
\frac{\partial}{\partial u^{i}},
$$
(where $|k_{i}|=1$ and $H_{ii}>0$ for any $i$) automatically satisfies the conditions (\ref{herm1}) and
(\ref{herm2}).
\end{ex}

\begin{proof} We assume that the multiplication, hermitian metric
and real structure are defined as above in canonical coordinates.

Let $\mathcal E$ be an eventual identity, given by $\mathcal E=
\sum_{i=1}^{n}f_{i}\frac{\partial}{\partial u^{i}}.$ Recall that
$f_{i}$ depends on the variable $u^{i}$ only. We will check
(\ref{herm1}) for fundamental vector fields $X=
\frac{\partial}{\partial u^{i}}$, $Y=\frac{\partial}{\partial
u^{j}}$ ($i\neq j$) and $Z= \frac{\partial}{\partial u^{p}}.$
Since the multiplication is semi-simple, (\ref{herm1}) is clearly
satisfied if $p\notin\{ i, j\}$. If $p=i$ say, then (\ref{herm1})
becomes
$$
\tilde{D}_{\frac{\partial}{\partial u^{j}}}
(f_{i}\frac{\partial}{\partial u^{i}})= {\mathcal E}\circ
\tilde{D}_{\frac{\partial}{\partial
u^{j}}}(\frac{\partial}{\partial u^{i}}),
$$
or, since $f_{i}$ depends only on $u_{i}$ and $i\neq j$,
\begin{equation}\label{de-veri}
f_{i}\tilde{D}_{\frac{\partial}{\partial u^{j}}}
(\frac{\partial}{\partial u^{i}})= {\mathcal E}\circ
\tilde{D}_{\frac{\partial}{\partial
u^{j}}}(\frac{\partial}{\partial u^{i}}).
\end{equation}
On the other hand, since $\tilde{D}$ is the Chern connection of
$\tilde{h}$,
$$
\tilde{D}_{X} (\frac{\partial}{\partial u^{i}}) =
\partial_{X}\mathrm{log} (H_{ii}) \frac{\partial}{\partial
u^{i}},\quad\forall X\in T^{1,0}M,\quad\forall i.
$$
In particular, $\tilde{D}_{\frac{\partial}{\partial u^{j}}}
(\frac{\partial}{\partial u^{i}})$ is a multiple of
$\frac{\partial}{\partial u^{i}}$ and (\ref{de-veri}) follows. We
proved that relation (\ref{herm1}) holds. It remains to prove
relation (\ref{herm2}). From the definitions of the real structure
and multiplication in canonical coordinates, it can be checked
that for any $Y:= \sum_{i=1}^{n}Y^{i}\frac{\partial}{\partial
u^{i}}$, the composition $\tilde{k}\tilde{C}_{Y} \tilde{k}$ is the
multiplication by the vector
$\sum_{i=1}^{n}\overline{Y^{i}}\frac{\partial}{\partial u^{i}}$.
In particular, both sides of (\ref{herm2}) vanish. Our claim
follows.

\end{proof}

It should be pointed out that the equations (\ref{herm1}) and
(\ref{herm2}) place highly restrictive conditions on the various
structures and may, in general, have no solution (as happens for
some of the two-dimensional non-semi-simple examples in
\cite{Takahasi}). Just as almost-dual Frobenius manifolds satisfy
{\rm almost} all of the axioms of a Frobenius manifold, asking for
the twisted structures to satisfy the full $tt^{*}$ axioms may be
too restrictive a condition. However, the above example does show
that solutions in the semi-simple case - albeit in the subclass of
diagonal real and hermitian structures - do exist.

LIANA DAVID: Institute of Mathematics Simion Stoilow of the
Romanian Academy, Calea Grivitei no. 21, Sector 1, Bucharest,
Romania; E-mail address: liana.david@imar.ro\\

IAN A. B. STRACHAN: Department of Mathematics, University of
Glasgow, Glasgow G12 8QW, UK; E-mail address: i.strachan@maths.gla.ac.uk

\end{document}